\newlist{CustomList}{enumerate}{1} 
\setlist[CustomList,1]{label={A\arabic*}),ref={A\arabic*}} 
\newcounter{constnum}
\newcommand{\const}[1]{\refstepcounter{constnum}\label{#1}}
\newenvironment{proofof}[1]{\begin{proof}\textbf{of {#1}}}{\end{proof}}
\begin{document}

\title{Quantitative Convergence Analysis of Projected Stochastic Gradient Descent for Non-Convex Losses via the Goldstein Subdifferential}
% \usepackage{times}
% Use \Name{Author Name} to specify the name.
% If the surname contains spaces, enclose the surname
% in braces, e.g. \Name{John {Smith Jones}} similarly
% if the name has a "von" part, e.g \Name{Jane {de Winter}}.
% If the first letter in the forenames is a diacritic
% enclose the diacritic in braces, e.g. \Name{{\'E}louise Smith}

\author{\name Yuping Zheng \email zhen0348@umn.edu \\
       \addr Department of Electrical and Computer Engineering\\
       University of Minnesota, Twin Cities\\
       Minneapolis, MN 55414, USA
       \AND
       \name Andrew Lamperski \email alampers@umn.edu \\
       \addr Department of Electrical and Computer Engineering\\
       University of Minnesota, Twin Cities\\
       Minneapolis, MN 55414, USA}

% Two authors with the same address
% \author{\Name{Yuping Zheng} \Email{zhen0348@umn.edu}\and
%  \Name{Andrew Lamperski} \Email{alampers@umn.edu}\\
%  \addr 200 Union St. Se, Keller Hall 4-174, Minneapolis, MN 55455, USA}

% Three or more authors with the same address:
% \altauthor{\Name{Author Name1} \Email{an1@sample.com}\\
%  \Name{Author Name2} \Email{an2@sample.com}\\
%  \Name{Author Name3} \Email{an3@sample.com}\\
%  \addr Address}

% Authors with different addresses:
% \altauthor{%
%  \Name{Author Name1} \Email{abc@sample.com}\\
%  \addr Address 1
%  \AND
%  \Name{Author Name2} \Email{xyz@sample.com}\\
%  \addr Address 2%
% }

\maketitle

\begin{abstract}%
Stochastic gradient descent (SGD) is the main algorithm behind a large body of work in machine learning. In many cases, constraints are enforced via projections, leading to projected stochastic gradient algorithms. In recent years, a large body of work has examined the convergence properties of projected SGD for non-convex losses in asymptotic and non-asymptotic settings. Strong quantitative guarantees are available for convergence measured via Moreau envelopes. However, these results cannot be compared directly with work on unconstrained SGD, since the Moreau envelope construction changes the gradient. Other common measures based on gradient mappings have the limitation that convergence can only be guaranteed if variance reduction methods, such as mini-batching, are employed. 
This paper presents an analysis of projected SGD for non-convex losses over compact convex sets. Convergence is measured via the distance of the gradient to the Goldstein subdifferential generated by the constraints. Our proposed convergence criterion directly reduces to commonly used criteria in the unconstrained case, and we obtain convergence without requiring variance reduction.  
We obtain results for data that are independent, identically distributed (IID) or satisfy mixing conditions ($L$-mixing). In these cases, we derive asymptotic convergence and $O(N^{-1/3})$ non-asymptotic bounds in expectation, where $N$ is the number of steps. In the case of IID sub-Gaussian data, we obtain almost-sure asymptotic convergence and high-probability non-asymptotic $O(N^{-1/5})$ bounds. In particular, these are the first non-asymptotic high-probability bounds for projected SGD with non-convex losses. 
\end{abstract}

\begin{keywords}%
  Stochastic Optimization, 
  Projected Stochastic Gradient Descent,
  Non-convex Learning,
  %Convergence Criterion, 
  Non-asymptotic Analysis%
\end{keywords}

\section{Introduction}
This paper focuses on the analysis of projected stochastic gradient descent (SGD) for solving optimization problems of the form:
$$
\min_{x\in\cX} \bbE[f(x,\bz)] =\min_{x\in\cX}\bar{f}(x),
$$
where $\cX$ is a compact convex constraint set, $\bbE$ denotes the expected value over the random variable $\bz$, and $\bar f$ is a smooth, but possibly non-convex loss. 

Stochastic gradient descent and its variants have a plethora of applications in machine learning. See e.g. \citep{bottou2018optimization,mcmahan2013ad,koren2009matrix,koren2021advances,zinkevich2010parallelized, zinkevich2003online,goodfellow2016deep}. Projected SGD is commonly employed for stabilization and regularization in  machine learning and neural networks, \citep{bottou2018optimization}, though often under different names. For example,  the projection scheme is called ``reprojection" in \citep{goodfellow2016deep} and a specific variant is called ``max-norm regularization" in \citep{srivastava2014dropout}. 

\paragraph*{Related Work.}
Due to its practical significance, a large body of literature has examined projected SGD and generalized families of algorithms that include projected SGD. We review work on asymptotic convergence and non-asymptotic bounds for non-convex problems next.  

Asymptotic convergence for projected SGD with non-convex objectives has a long history, with proofs dating back to at least \citep{ermol1998stochastic,ermoliev2003solution}. More recent work on asymptotic properties of projected SGD and its generalizations, such as proximal gradients, includes \citep{davis2020stochastic,bianchi2022convergence,majewski2018analysis,nguyen2023stochastic,josz2024proximal,duchi2018stochastic,asi2019stochastic,asi2019importance,li2022unified}. These works, and the work of the present paper, are largely based on continuous-time approximation methods described in \citep{kushner2003stochastic,borkar2023stochastic,benaim2006dynamics}.

Non-asymptotic bounds in expectation, measured with respect to Moreau envelopes and related measures, are given for IID data, $\bz_k$, in \citep{davis2019stochastic,deng2021minibatch,zhu2023unified,gao2024stochastic,alacaoglu2020convergence,davis2025stochastic,fatkhullin2025can} and dependent data under mixing conditions in \citep{alacaoglu2023convergence}. Non-asymptotic bounds in expectation, measured special variants of the proximal gradient mapping are given in \citep{ghadimi2016mini,lan2024projected} with similar measures used in \citep{he2025non,xie2025tackling}.

We will show in Section~\ref{sec:convergence_criterion} that the Moreau envelope measure from \citep{davis2019stochastic} and subsequent works do not reduce to the gradient norm, $\|\nabla \bar{f}(x)\|$, in the unconstrained case, which is arguably the most common measure for non-convex unconstrained problems. In contrast, we will show that measures from \citep{ghadimi2016mini} and related works do reduce to $\|\bar{f}(x)\|$, but result in a non-shrinking term that can only be mitigated by variance reduction methods, such as mini-batching. 

For convex losses, the convergence theory for projected SGD is more mature, with overviews given in \citep{hazan2016introduction,shalev2014understanding}. 

Beyond projected SGD and generalizations, a variety of alternative methods for enforcing constraints in stochastic optimization have been proposed. These include penalty methods \citep{lin2022complexity,alacaoglu2024complexity}, Frank-Wolfe methods \citep{reddi2016stochastic,lacoste2016convergence}, and Lagrangian methods \citep{papadimitriou2025stochastic}.

\paragraph*{Contributions.}
We present an analysis of projected SGD with performance measured by distance of $-\nabla \bar{f}(x)$ to the Goldstein subdifferential, \citep{goldstein1977optimization}, associated with the constraints. Unlike Moreau envelope measures, our measure reduces to $\|\nabla \bar{f}(x)\|$ in the unconstrained case, and unlike the proximal gradient mapping measures from \citep{ghadimi2016mini}, we can show convergence without variance reduction / mini-batching. 

For IID and $L$-mixing data, $\bz_k$, we show that our proposed measure converges asymptotically to $0$ in expectation under stochastic approximation step size conditions. For fixed step sizes, we give a non-asymptotic bound in expectation of $O(N^{-1/3})$, where $N$ is the number of steps. Currently, our bound is weaker than the $O(N^{-1/2})$ bound obtained with respect to the Moreau envelope in \citep{davis2019stochastic}. More work is required to determine if this is due to a fundamental difference in the measures, or a limitation of the current analysis.

For IID sub-Gaussian data, we show that our measure converges asymptotically to $0$ with probability $1$ under stochastic approximation step size conditions. For fixed step sizes, we give a non-asymptotic bound of $O(N^{-1/5})$, which holds with high probability. In particular, these are the first non-asymptotic high probability bounds for projected SGD with non-convex losses.

\section{Problem Setup} \label{sec:setup}

\subsection{Notation and terminology}

$\bbN$ denotes non-negative integers and $\bbR$ denotes the real numbers. Random variables are denoted in bold. If $\bx$ is random variable, then $\bbE[\bx]$ denotes its expected value. $\|x\|$ denotes the Euclidean norm over $\bbR^n$. The probabilistic indicator function is denoted by $\indic$. (The indicator function from variational / convex analysis will be denoted by $\cI_{\cX}$ below.) $\bbP$ denotes probability measure. If $\cF$ and $\cG$ are $\sigma$-algebras, then $\cF \lor \cG$ denotes the $\sigma$-algebra generated by the union of $\cF$ and $\cG$.

$\Pi_{\cX}(y)$ denotes the projection of $y$ onto a convex set $\cX$, i.e. $\Pi_{\cX}(y) = \argmin_{x \in \cX} \|y -x\|$. The Euclidean distance of $y$ to the set $\cX$ is denoted by $\dist(y,\cX)$.  

The boundary of $\cX$ is deonoted as $ \partial\cX$, the normal cone of $\cX$ at a point $x$ is denoted by $\cN_{\cX}(x)$, the tangent cone of $\cX$ at a point $x$ is denoted by $T_{\cX}(x)$. $\cN_{\cX}(x) = \{ \phi \vert \phi^\top x \ge \phi^\top z, \; \forall z \in \cX \}$. $T_{\cX}(x) = \{ t(y-x)\vert  y \in \cX, \; t \ge 0\}$.

Let $osc(\bar{f})$ denote the oscillation of a bounded function $\bar{f}$, which is defined by $osc(\bar{f}) = \sup_{x, x' \in \cX} |\bar{f}(x) - \bar{f}(x')|$.

% $T_{\cX}(x) =  \underset{\tau \downarrow 0}{\lim \sup } \tau^{-1} (\cX- x) = \{ t(y-x)\vert  y \in \cX, \; t \ge 0\}$.

\subsection{Projected SGD}

Assume that the initial value of $\bx_0 \in \cX$ is independent of $\bz_i$ for all $i \in \bbN$. Projected SGD is the algorithm: 
\begin{align} \label{eq:projected_SGD}
   \bx_{k+1} = \Pi_{\cX} \left(\bx_k  - \alpha_k \nabla_x f(\bx_k, \bz_k)  \right)
\end{align}
where $\alpha_k$ is the step size. Our main result holds for any determinitic step size sequence with $0<\alpha_k\le \frac{1}{2}$. We also describe special cases of constant step size, $\alpha_k=\alpha$, and standard stochastic approximation conditions:
%In this work, our analysis works for both constant step size and standard stochastic approximation step size setup. Note the standard stochastic approximation stepsize condition is often called Robbin-Monro condition, which is defined as:
\begin{equation}
  \sum_{k=0}^\infty \alpha_k =\infty , \hspace{30pt} \sum_{k=0}^\infty \alpha_k^2 < \infty.
\end{equation}

\subsection{Approximate Stationarity via the Goldstein Subdifferential}
% \textcolor{red}{Put the references in the bib}
The Goldstein subdifferential is a relaxed version of the Clarke subdifferential and is widely used in nonsmooth optimization. It was first introduced in \citep{goldstein1977optimization} and has been used for measuring the stationarity for optimization algorithms, e.g. \citep{davis2022gradient,zhang2020complexity}.

Let $\cX$ denote a closed convex set. If $\cI_{\cX}$ is the corresponding convex indicator function:
$$
\cI_{\cX}(x)=\begin{cases}
  0 & x\in \cX \\
  +\infty & x\notin \cX.
\end{cases}
$$
then the Clarke subdifferential reduces to the standard convex subdifferential, and corresponds to the normal cone:
$$
\overline{\partial} \cI_{\cX}(x)=\partial\cI_{\cX}(x)=\cN_{\cX}(x).
$$
See \cite{rockafellar2009variational} for details on these definitions. 
%When $g=\overline{f}+\cI_{\cX}$, where $\overline{f}$ is continuously differentiable, then the Clarke subdifferential has the form:
%$$
%\overline{\partial}(\overline{f}+\cI_{\cX})(x)=\nabla f(x)+\cN_{\cX}(x).
%$$

For $\epsilon >0$, the Goldstein subdifferential is defined in terms of the Clarke subdifferential by:
$$
\overline{\partial}_{\epsilon}g(x)=\mathrm{conv}\left(\bigcup_{\|y-x\|\le \epsilon} \overline{\partial}g(y) \right).
$$
Thus, in the simple case that $g=\cI_{\cX}$, we have 
$$
\overline{\partial}_{\epsilon}\cI_{\cX}(x)= \mathrm{conv}\left(\bigcup_{\|y-x\|\le \epsilon} \cN_{\cX}(y) \right).
$$

The standard first-order necessary optimality conditions give that if $x$ is a local minimizer of $\bar{f}$ then $-\nabla \bar f(x)\in \cN_{\cX}(x)$. This occurs if and only if $\dist(-\nabla \bar{f}(x),\overline{\partial}\cI_{\cX}(x))=0.$ In this work, we will bound the relaxed stationarity measure, $\dist(-\nabla\bar f(x),\overline{\partial}_{\epsilon}\cI_{\cX}(x))$. 

\subsection{$L$-mixing processes}

In this paper, we consider the case that the external data variables, $\bz_k$,  can have dependencies over time, but these dependencies satisfy a property known as $L$-mixing. 
The class of $L$-mixing processes was introduced in \citep{gerencser1989class} and has been used to quantify the time-correlation in stochastic optimization in recent years \citep[see][]{barkhagen2021stochastic,chau2019fixed,chau2021stochastic,
zheng2022constrained,zheng2025non, zheng2025nonasymptotic}. It contains a wide variety of processes including measurements of geometrically ergodic Markov chain \citep{gerencser2002new}, which is suitable to model various of stable nonlinear stochastic systems. Furthermore, the class of $L$-mixing processes is closed under a variety of operations. In particular, $L$-mixing random variables results in another $L$-mixing sequence after passing through a stable, causal linear filter \citep{zheng2025non}. Therefore, the class of $L$-mixing processes contains a wide variety of data streams from system identification and time-series analysis.

Now we introduce the definition of the discrete-time $L$-mixing processes.
Let $\cF_k$ be an increasing family of $\sigma$-algebras and let $\cF^+$ be a decreasing family of $\sigma$-algebras such that $\cF_k$ and $\cF^+_k$ are independent for all $k \ge 0$. 
A discrete-time stochastic process $\bz_k$ is called $L$-mixing with respect to $(\cF, \cF^+)$ if 
\begin{itemize}
  \item $\bz_k$ is $\cF_k$-measurable for all integers $k \ge 0$
  \item $\cM_m(\bz) \coloneqq \sup_{k \ge 0 } \bbE^{1/m} \left[\|\bz_k\|^m\right] < \infty$ for all $m\ge 1$
  \item $\Psi_m(\bz) \coloneqq \sum_{\tau = 0}^\infty \psi_m(\tau, \bz) < \infty$ for all integers $k \ge 1$ and all $m\ge 1$,\\
  where $\psi_m(\tau, \bz) = \sup_{k \ge \tau} \bbE^{1/m}\left[\left\| \bz_k - \bbE[\bz_k \vert \cF^+_{k -\tau}]\right\|^m \right]$.
\end{itemize}

The value of $\Psi_m(\bz)$ measures how fast the time-dependence between data decays. 
\subsection{Assumptions}

\paragraph{General Assumptions.}

For the rest of the paper, $\cX$ denotes a compact convex subset of $\bbR^n$ of diameter $D$ which contains a ball of radius $r>0$ around the origin. 
Assume that for each $z$, $\nabla_x f(x,z)$ is $\ell$-Lipschitz in both $x$ and $z$, i.e. $\|\nabla_x f(x_1, z) - \nabla_x f(x_2, z)\| \le \ell \|x_1 - x_2\|$ and $\|\nabla_x f(x, z_1) - \nabla_x f(x, z_2)\| \le \ell \|z_1 - z_2\|$. This implies that $\| \nabla \bar{f}(x_1) - \nabla \bar{f}(x_2) \| \le \ell \|x_1 - x_2\|$, $\|\nabla \bar{f}(x)\| \le u $ where $u \le \nabla \bar{f}(0) + \ell D$ as well as $osc(\bar{f}) \le  D u$.

Note that without further specification in the paper, we simply use $\nabla f(x,z)$ to indicate $\nabla_x f(x,z)$.

\noindent
\paragraph{Assumptions on the external random variables $\bz_k$.}

In this work, we present the convergence bound under different assumptions on the external random variables $\bz_k \in \cZ$:

\begin{CustomList}
  \item $\nabla f(x, \bz_k)=\nabla \bar{f}(x)+\bz_k$,  where $\bz_k$ are IID zero mean sub-Gaussian random vectors, independent of the initial state, $\bx_0$. Specifically,  there exists a number $\hat{\sigma} > 0$ such that for all $v \in \bbR^n$, the following bound holds: \label{assumption:sub-Gaussian} 
  \begin{align} \label{eq:sub-Gaussian}
    \bbE\left[e^{v^\top \bz}\right] \le e^{\frac{1}{2} \hat{\sigma}^2 \|v\|^2}.
  \end{align}    
  \item $\bbE\left[\|\nabla f(x, \bz_k) - \nabla \bar{f}(x)\|^2\right] \le \sigma^2$ \label{assumption:bounded_var} and $\bz_k$ are independent for all $k \in \bbN$.
  \item $\bz_k$ is $L$-mixing processes , independent of the initial state, $\bx_0$.\label{assumption:L-mixing} 
\end{CustomList}

Note that \ref{assumption:sub-Gaussian} is a special case of both \ref{assumption:bounded_var} and  \ref{assumption:L-mixing}. Indeed, using that $\bbE[(e_i^\top \bz)^2]\le \hat\sigma^2$ for each standard basis vector, $e_i$, gives that $\bbE[\|\bz\|^2]\le n \hat\sigma^2$. To see that \ref{assumption:L-mixing} holds,  we can set $\cF_k = \sigma\left(\{\bz_0,\ldots,\bz_k\}\right)$ and $\cF_k^+=\sigma\left(\{\bz_{k+1},\bz_{k+2},\ldots\}\right)$. Then we can bound the moments via bounds on the moment generating function, noting that for all $m\ge 1$: $\Psi_m(\bz)=\psi_m(0,\bz)$. In particular, $\Psi_2(\bz)\le \sqrt{n}\hat \sigma$. 

%Note that the conditions that guarantee the uniform sub-Gaussian property to hold are discussed in \citep{lamperski2021projected}. 
\section{Approximation and Main Results} \label{sec:main_results}

In this section, we present the continuous-time approximation of the algorithm via ordinary differential equations (ODEs).
Then, we present the main results under our proposed convergence criterion. More discussion on convergence criteria is shown in Section \ref{sec:convergence_criterion}.

\subsection{Continuous-Time Approximation} \label{sub:preliminaries}
The following lemma is the key to the application of ODE method to approximate the discrete-time processes with continuous-time processes.

\begin{lemma} \label{lem:proj_tangent_cone}
  For  all $x \in \cX$, $g \in \bbR^n$, the following holds:
  \begin{align*}
    \lim_{\alpha \downarrow 0} \frac{\Pi_{\cX}(x +\alpha g) - x}{\alpha} = \Pi_{T_{\cX}(x)} (g)
  \end{align*} 
\end{lemma}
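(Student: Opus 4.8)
The plan is to exploit the variational inequality that characterizes Euclidean projection onto a convex set, together with a compactness argument to pass from subsequential limits to a genuine limit. Throughout, write $p(\alpha)=\Pi_{\cX}(x+\alpha g)$ and $w(\alpha)=\frac{p(\alpha)-x}{\alpha}$, and note that since $x\in\cX$ we have $\Pi_{\cX}(x)=x$. First I would record two elementary facts. Nonexpansiveness of the projection gives $\|p(\alpha)-x\|=\|\Pi_{\cX}(x+\alpha g)-\Pi_{\cX}(x)\|\le\alpha\|g\|$, so $\|w(\alpha)\|\le\|g\|$ for all $\alpha>0$; the family $\{w(\alpha)\}$ is bounded. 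Moreover, $p(\alpha)\in\cX$ and $w(\alpha)=\tfrac{1}{\alpha}(p(\alpha)-x)$ has the form $t(y-x)$ with $t=1/\alpha\ge0$ and $y=p(\alpha)\in\cX$, so $w(\alpha)\in T_{\cX}(x)$.

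Next I would translate the projection identity into a variational inequality. Since $p(\alpha)=\Pi_{\cX}(x+\alpha g)$, for every $y\in\cX$ we have $(x+\alpha g-p(\alpha))^\top(y-p(\alpha))\le0$. Substituting $p(\alpha)=x+\alpha w(\alpha)$ and dividing by $\alpha>0$ yields
\begin{align*}
  (g-w(\alpha))^\top\big(y-x-\alpha w(\alpha)\big)\le0,\qquad\forall\, y\in\cX.
\end{align*}
The key observation is to take $y=x$ here, which gives $-\alpha\,(g-w(\alpha))^\top w(\alpha)\le0$, i.e. $(g-w(\alpha))^\top w(\alpha)\ge0$; this supplies one half of the complementarity condition in the limit.

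Now fix any sequence $\alpha_n\downarrow0$. By boundedness, some subsequence satisfies $w(\alpha_{n_k})\to\bar w$, and $\alpha_{n_k}w(\alpha_{n_k})\to0$. Since $T_{\cX}(x)$ is a closed convex cone containing each $w(\alpha_{n_k})$, the limit $\bar w\in T_{\cX}(x)$. Passing to the limit in the variational inequality gives $(g-\bar w)^\top(y-x)\le0$ for all $y\in\cX$; scaling by $t\ge0$ shows $(g-\bar w)^\top v\le0$ for every $v\in T_{\cX}(x)$, so $g-\bar w$ lies in the polar cone. Passing to the limit in the $y=x$ inequality gives $(g-\bar w)^\top\bar w\ge0$, while taking $v=\bar w$ in the polar-cone condition gives $(g-\bar w)^\top\bar w\le0$; hence $(g-\bar w)^\top\bar w=0$. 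These three conditions — $\bar w\in T_{\cX}(x)$, $g-\bar w$ in the polar cone, and the complementarity $(g-\bar w)^\top\bar w=0$ — are exactly the characterization of $\bar w=\Pi_{T_{\cX}(x)}(g)$. Because they identify the subsequential limit uniquely and independently of the chosen subsequence, the full limit exists and equals $\Pi_{T_{\cX}(x)}(g)$.

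I expect the main obstacle to be the bookkeeping around the tangent cone: confirming that $\Pi_{T_{\cX}(x)}(g)$ refers to projection onto the closed tangent cone (the closure of $\{t(y-x):y\in\cX,\,t\ge0\}$, which can fail to be closed at boundary points), and verifying that the three conditions above indeed characterize the cone projection. The genuinely clever step is the substitution $y=x$, which produces the reverse complementarity inequality needed to pin down the limit; everything else is routine continuity and compactness.
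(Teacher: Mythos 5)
Your argument is correct, and it is worth noting that the paper never proves this lemma at all: it is imported from the literature, with the proof deferred to Proposition~2 of \citep{mccormick1972gradient} (see also \citep{calamai1987projected}). What you have reconstructed is essentially the classical variational-inequality proof of directional differentiability of the metric projection (going back to Zarantonello), which is the same style of argument as the cited source. Concretely: the projection characterization you invoke is exactly the paper's Lemma~\ref{lem:projection_characteristic}; dividing the variational inequality by $\alpha$, extracting the polar-cone condition $(g-\bar w)^\top v\le 0$ in the limit, obtaining the reverse complementarity inequality from the substitution $y=x$, and concluding via the Moreau-type characterization of projection onto a closed convex cone ($\bar w\in K$, $g-\bar w\in K^{\circ}$, $(g-\bar w)^\top\bar w=0$ iff $\bar w=\Pi_{K}(g)$) is a complete and standard chain, and the boundedness-plus-unique-subsequential-limit step correctly upgrades subsequential convergence to convergence of the full limit $\alpha\downarrow 0$. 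Your proof would serve as a self-contained replacement for the paper's citation.

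The one point you must patch---and you correctly diagnosed it yourself in your closing remarks---is the closedness of the tangent cone. Under the paper's definition $T_{\cX}(x)=\{t(y-x)\mid y\in\cX,\ t\ge 0\}$, this cone need \emph{not} be closed at boundary points (for the unit disk at a boundary point it is an open half-plane together with the origin), so your mid-proof assertion ``Since $T_{\cX}(x)$ is a closed convex cone containing each $w(\alpha_{n_k})$, the limit $\bar w\in T_{\cX}(x)$'' is false as literally written, and projection onto a non-closed cone may fail to exist. The repair is exactly the one you sketch: run the whole argument with $K=\overline{T_{\cX}(x)}$. Each $w(\alpha)\in T_{\cX}(x)\subseteq K$, hence $\bar w\in K$; the polar condition derived for vectors $t(y-x)$ extends to all of $K$ by continuity; and the three-condition characterization is valid for the closed cone, so uniqueness of the subsequential limit is preserved. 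This reading is also the one the paper implicitly uses downstream, since Lemma~\ref{lem:proj_tangent_normal} (the Moreau decomposition $\Pi_{T_{\cX}(x)}(g)=g-\Pi_{\cN_{\cX}(x)}(g)$) requires $T_{\cX}(x)$ and $\cN_{\cX}(x)$ to be mutually polar \emph{closed} cones. With that one-line adjustment your proof is complete.
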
 
This result appears in \citep{calamai1987projected, mccormick1972gradient} and the corresponding proof can be found in Proposition 2 of \citep{mccormick1972gradient}. 

Lemma \ref{lem:proj_tangent_cone} implies that projected SGD can be viewed as a constrained stochastic Euler approximation to the following ODE:
\begin{align} \label{eq:x_C}
  \frac{d}{dt} \bx_t^C = \Pi_{T_{\cX}(\bx_t^C)}(- \nabla \bar{f}(\bx_t^C)).
\end{align}
Note $\bx^C$ is called the \textit{continuous} process in the rest of the paper.

Let $\tau_k = \sum_{j=0}^{k-1}\alpha_j$, which measures the total amount of continuous time that has been simulated prior to the computation of $\bx_k$. 
To analyze projected SGD in terms of continuous-time processes, we let $\bx_t^A$ denote the iterates of \eqref{eq:projected_SGD} embedded into continuous-time as:
$$
\bx_{t}^A = \bx_k \quad \textrm{if } t\in [\tau_k,\tau_{k+1}).
$$

\begin{figure}[ht]
\centering
\begin{minipage}[b]{.45\textwidth}
  \centering
\includegraphics[width=\textwidth]{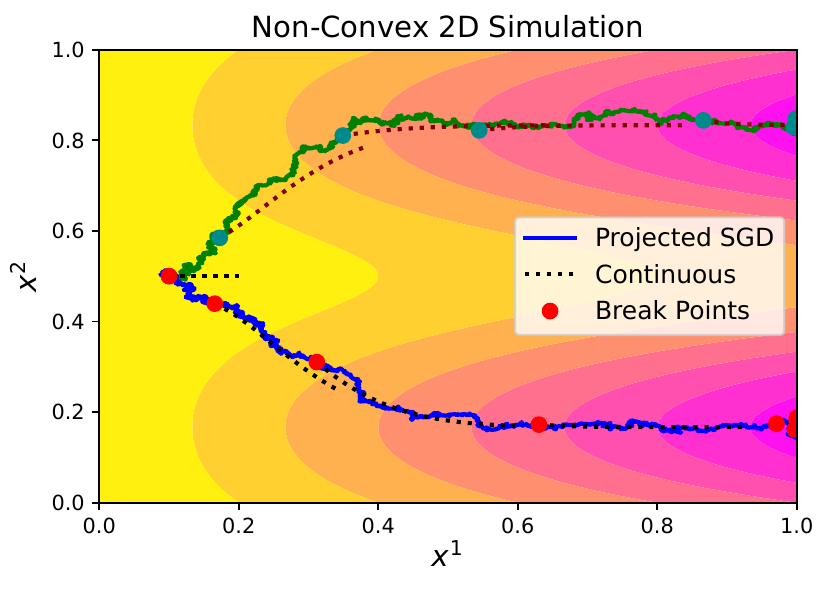}
\end{minipage}
\begin{minipage}[b]{.45\textwidth}
  \centering
\includegraphics[width=.95\textwidth]{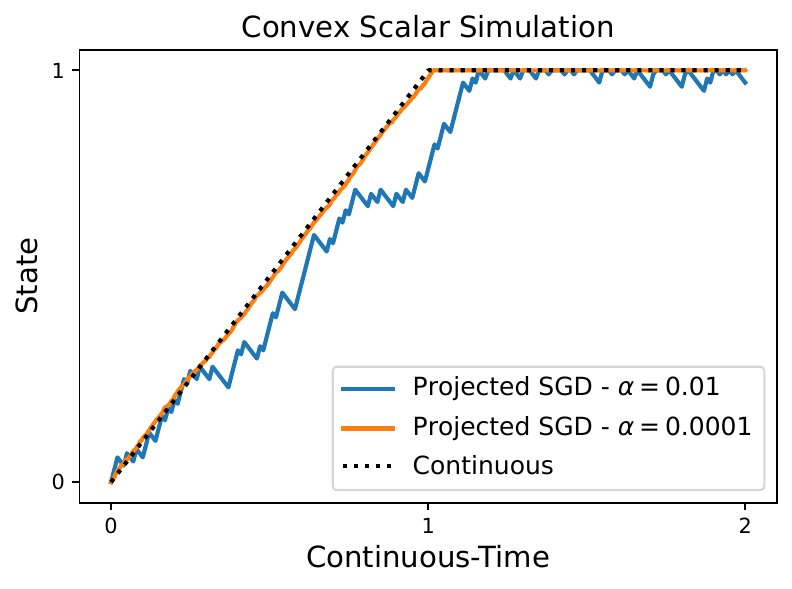}
\end{minipage}
\caption{\label{fig:simulations} {\bf Simulations.} The left shows two runs of projected SGD for a non-convex system from the same starting point. The combination of stochaticity and non-convexity implies that two trajectories with the same starting point can diverge over time. Here, the solid lines show the result of projected SGD, the dotted lines show the continuous-time approximations, and the filled circles indicate the break points. The right shows two runs of projected SGD on a convex scalar problem. With small step size, $\alpha=0.0001$, the trajectory converges to a small region near the optimal solution. However, the existing convergence measures for constrained problems amplify the small flucuations. } 
\end{figure} 

As seen in Fig.~\ref{fig:simulations}, projected SGD and its continuous-time approximation can drift apart due to instabilities. So, for our convergence analysis, we will construct a sequence of restarted continuous-time processes, defined as follows.

For a fixed number of iterates, $N$, define break points by:
\begin{align*}
  s_0 &= 0 \\
  s_{i+1}&=  \max\{\tau_j \vert \tau_j - s_i \le 1, 0 \le \tau_j \le \tau_N \} \textrm{ if } s_i<\tau_N.
\end{align*}

Then, for 
 $t \in [s_i, s_{i+1}]$, set:
\begin{align*}
  \frac{d}{dt} \bx_t^{C_i} &= \Pi_{T_{\cX}(\bx_t^{C_i})}(- \nabla \bar{f}(\bx_t^{C_i}))\\
  \bx_{s_i}^{C_i} &= \bx_{s_i}^A.
\end{align*}

For compact notation, define $\bx_t^J$ to be the process that jumps between the continuous processes: $\bx_t^J=\bx_t^{C_{i}}$ when $t\in [s_i,s_{i+1})$.

For $k\ge 0$, let 
$$
\boldb_k = \sup_{t\in [\tau_k,\tau_{k+1})}\|\bx_t^J-\bx_{\tau_k}^A\|.
$$

%Firstly, $s_0 = \tau_0 =0$, $\tau_{k+1} = \tau_k +\alpha_k$ for all $k \in [0, N-1]$ and 
%$s_{i+1} =  \max\{\tau_j \vert \tau_j - s_i \le 1, 0 \le \tau_j \le \tau_N \} $ for all $i$ s.t. $s_{i} < \tau_N$.
%where $T =\sum_{k=0}^{N-1} \alpha_k$. 
Denote $\chi(N) = \max\{ i \vert  s_i < \tau_N \}$ so that  $s_{\chi(N)+1} =\tau_N$.  Then the total number of subintervals partitioning the interval $[0,\tau_N]$ is $\chi(N)+1$. Let $\cK(i)$ denote the value of $j$  such that $ \tau_j = s_i$, and let $\zeta(j)$ denote the value of $i$ such that $\cK(i)\le j<\cK(i+1)$.

Assume that $\alpha_k \le \frac{1}{2}$ for all $k \in [0,N-1]$. Then for any $i \in [0, \chi(N)-1]$, there exists $j \in [0, N-1]$ s.t. $s_{i+1} =\tau_j \le s_i +1 \le \tau_{j+1} = \tau_j + \alpha_j \le \tau_j + \frac{1}{2}$, which implies that $s_i + \frac{1}{2} \le s_{i+1} \le s_i +1 $, i.e. $\frac{1}{2} \le s_{i+1}- s_i \le 1$. The last interval is $[s_{\chi(N)}, s_{\chi(N)+1}]$ whose length is at most $1$, but is  not necessarily greater than $\frac{1}{2}$. 

Figure~\ref{fig:main}  shows the partitions of the interval $[0,\tau_N]$ for constant step size and diminishing step size according to the construction rules above. For constant step size, set $\alpha = \frac{3}{8}$ and $N=9$, the interval $[0,\tau_N]$ is partitioned into 5 subintervals. For diminishing step size, set $\alpha_k = \frac{1}{k+2}$ and $N=20$, the interval $[0,\tau_N]$ is partitioned into 3 subintervals.

\begin{figure}[ht]
  \centering

  % First figure
  \begin{minipage}{0.9\textwidth}
      \centering
      \includegraphics[width=\linewidth]{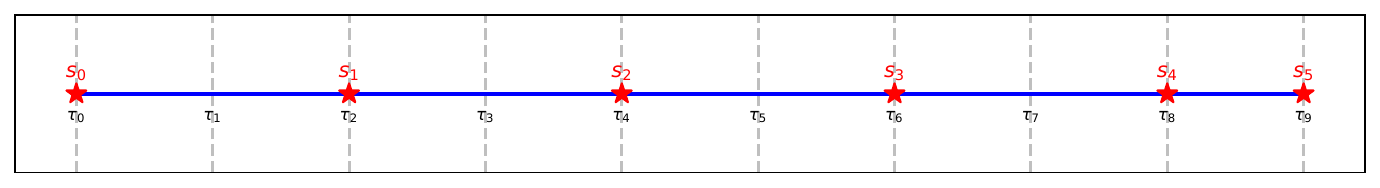}
      % \caption*{(a) First figure} % optional sub-caption
      \par{(a) Constant Step Size}
  \end{minipage}

  \vspace{0.3cm} % vertical spacing

  % Second figure
  \begin{minipage}{0.9\textwidth}
      \centering
      \includegraphics[width=\linewidth]{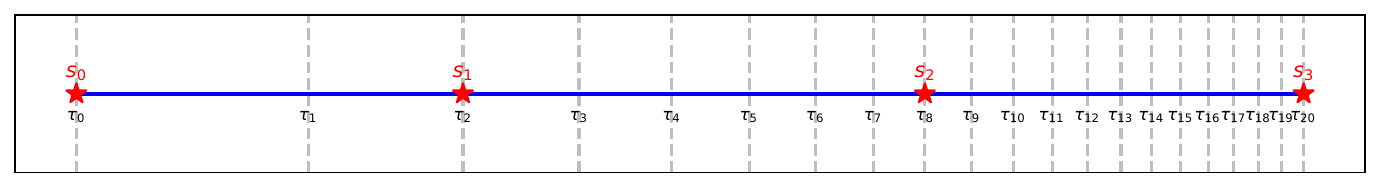}
      % \caption*{(b) Second figure} % optional sub-caption
      \par{(b) Diminishing Step Size}
  \end{minipage}

  % Main caption for the figure
  \caption{Demonstration of the construction of subintervals $[s_i, s_{i+1}]$.}
  \label{fig:main}
\end{figure}
% $s_{i} = \tau_{j_i}$, $j_0 = 0$, $j_K = N$, so $s_0 = \tau_0 = 0$ and $s_K = \tau_N =T$. In other words, the interval $[0,  T]$ is divided into $K$ sections, $[s_i, s_{i+1}]$, for $i = 0, \cdots, K-1$. Besides, $\tau_{k+1} = \tau_k +\alpha_k$ for all integers $k \in [0, N-1]$ where $\alpha_k \le \frac{1}{2}$ is assumed. Let $s_i = \tau_{j_i}$ for some $j_i \in [0,N-1]$. We know $[s_i +\frac{1}{2}, s_i +1]$ has length $\frac{1}{2}$, which must contain some $\tau_j$ for $j > j_i$, i.e. $s_i +\frac{1}{2} \le \tau_j \le s_i +1$. If we set $s_{i+1} =\tau_j$, then $\frac{1}{2} \le s_{i+1} -s_i \le 1$. 

In the results below, we will use the following constants:
\const{const_sum_sq}
\const{const_max_sq_root}

\const{const_sum_sq_high_prob}
\const{const_max_high_prob_1}
\const{const_max_high_prob_2}
\const{const_general_1}
\const{const_general_2}
\begin{subequations}
  \label{eq:constants}
\begin{align}
  c_{\ref{const_sum_sq}}&=\begin{cases}
    e^{\ell} \sqrt{n} \hat\sigma & \textrm{Under Assumption }\ref{assumption:sub-Gaussian} \\
    e^{\ell} \sigma & \textrm{Under Assumption }\ref{assumption:bounded_var} \\
    2 \ell e^{\ell} \Psi_2(\bz) & \textrm{Under Assumption } \ref{assumption:L-mixing}
  \end{cases} \\
    c_{\ref{const_max_sq_root}}&= \left( u + \sqrt{2 r^{-1}u (Du+D^2)} \right)e^{\ell} \\
    c_{\ref{const_sum_sq_high_prob}}&= 2\sqrt{2}e^{2\ell} \hat\sigma D  \\
    c_{\ref{const_max_high_prob_1}}&= 4e^{2\ell}\hat\sigma^2 \\
    c_{\ref{const_max_high_prob_2}}&=e^{2\ell}(n+1) \hat \sigma^2 
\end{align}
\end{subequations}

\begin{align*}
\end{align*}

The following lemma gives bounds in expectation and with high probability on the deviations of the algorithm from the jumping continuous process $\|\bx_t^A-\bx_t^J\|$. It is proved in Appendix~\ref{app:supporting_lemmas}.

\begin{lemma} \label{lem:x_A2C_b}
  Assume that $0 < \alpha_k \le \frac{1}{2}$ for all $k\in \bbN$. 
  Let $\cK(i)$ be the sequence of integers defined in Section~\ref{sub:preliminaries}. The following hold:

  \begin{enumerate}[label=(\roman*)]
    \item If assumption \ref{assumption:sub-Gaussian}, \ref{assumption:bounded_var}, or \ref{assumption:L-mixing} holds, then for all integers $k\in [\cK(i),\cK(i+1))$:
  \begin{align*}
   &\bbE\left[ \boldb_k \right] \le c_{\ref{const_sum_sq}}\sqrt{\sum_{j={\cK(i)}}^{k-1} \alpha_j^2}  %\\
   %& \hspace{100pt}
   + c_{\ref{const_max_sq_root}}\max_{j \in [\cK(i),k]} \sqrt{ \alpha_j}. 
  \end{align*}
\item If Assumption \ref{assumption:sub-Gaussian} holds and $\delta\in (0,1)$, then with probability at least $1-  \delta$, 
  \begin{align*}
    &\max_{k\in [\cK(i),\cK(i+1))}\boldb_k \le 
    \left( c_{\ref{const_sum_sq_high_prob}} \sqrt{\log(2\delta^{-1})}\sqrt{ \sum_{j=\cK(i)}^{\cK(i+1)-1} \alpha_j^2}  + \left( c_{\ref{const_max_high_prob_1}} \log(2\delta^{-1}) + c_{\ref{const_max_high_prob_2}}\right)  \sum_{j=\cK(i)}^{\cK(i+1)-1} \alpha_j^2 \right)^{1/2}
   \\
   & \hspace{100pt} 
    +c_{\ref{const_max_sq_root}}\max_{j \in [\cK(i),\cK(i+1))} \sqrt{ \alpha_j} =: h_i(\delta) .
  \end{align*}
  \end{enumerate}
\end{lemma}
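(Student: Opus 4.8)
The plan is to control $\boldb_k$ by splitting it into the error at the left endpoint of the step and the motion of the continuous process within the step. Since $\bx_{\tau_k}^A=\bx_k$ is constant on $[\tau_k,\tau_{k+1})$, and the restarted flow has velocity $\Pi_{T_{\cX}(\cdot)}(-\nabla\bar f(\cdot))$ of norm at most $\|\nabla\bar f\|\le u$ (projection onto a cone containing $0$ is non-expansive), for $t$ in this interval $\|\bx_t^{C_i}-\bx_k\|\le\|\bx_{\tau_k}^{C_i}-\bx_k\|+u\alpha_k$. Thus it suffices to bound the grid-point error $e_k:=\bx_k-\bx_{\tau_k}^{C_i}$ for $k\in[\cK(i),\cK(i+1)]$, with $e_{\cK(i)}=0$ by the restart condition $\bx_{s_i}^{C_i}=\bx_{s_i}^A$; the leftover $u\alpha_k$ is absorbed into $c_{\ref{const_max_sq_root}}\max_j\sqrt{\alpha_j}$ via $\alpha_k\le\sqrt{\alpha_k}$, which explains the leading $u$ inside $c_{\ref{const_max_sq_root}}$.

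Next I would set up a one-step recursion for $e_j$. Using the firmly non-expansive projection inequality $\|\Pi_{\cX}(y)-z\|^2\le\|y-z\|^2$ for $z\in\cX$, with $y=\bx_j-\alpha_j\nabla f(\bx_j,\bz_j)$ and $z=\bx_{\tau_{j+1}}^{C_i}\in\cX$, I expand $\|e_{j+1}\|^2$ after inserting the Euler increment. This produces three contributions: (a) a drift factor $(1+\mathcal O(\alpha_j))\|e_j\|$ from $\ell$-Lipschitzness of $\nabla\bar f$, whose product over the subinterval telescopes to the Gronwall factor $e^{\ell}$ because the construction forces $\sum_{j\in[\cK(i),\cK(i+1))}\alpha_j\le1$ (this is the ubiquitous $e^{\ell}$ in the constants); (b) the zero-mean stochastic increment $\alpha_j\xi_j$, where $\xi_j:=\nabla f(\bx_j,\bz_j)-\nabla\bar f(\bx_j)$; and (c) a deterministic term measuring the mismatch between the projected gradient step and the time-integral of the projected-gradient flow.

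Term (c) is the main obstacle. Because the flow's velocity field is discontinuous across $\partial\cX$ (the tangent cone jumps there), the per-step Euler error is not the usual $\mathcal O(\alpha_j^2)$; instead I would bound the discrete and continuous normal-cone corrections separately using the interior-ball hypothesis. The radius-$r$ ball $rB\subseteq\cX$ gives $\langle p,x\rangle\ge r\|p\|$ for every $x\in\partial\cX$ and outward normal $p\in\cN_{\cX}(x)$, which converts an energy telescoping of $\tfrac12\|\bx_j\|^2$ into a bound on the accumulated correction mass of order $r^{-1}u(Du+D^2)$. I would aim to show that the accumulated contribution of (c) is of order $\max_j\sqrt{\alpha_j}$ — e.g. by establishing a per-step bound of order $\alpha_j^{3/2}$ and using $\sum_{j}\alpha_j^{3/2}\le(\max_j\sqrt{\alpha_j})\sum_j\alpha_j\le\max_j\sqrt{\alpha_j}$ over the length-$\le 1$ subinterval — which reproduces exactly the $\sqrt{2r^{-1}u(Du+D^2)}\,e^{\ell}\max_j\sqrt{\alpha_j}$ portion of $c_{\ref{const_max_sq_root}}$. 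Verifying this geometric estimate, and checking that the square-root scaling rather than a worse power actually survives, is the delicate step.

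Finally I would separate the two assertions. For (i), taking expectations kills the cross term $\langle\cdot,\xi_j\rangle$ under \ref{assumption:sub-Gaussian} or \ref{assumption:bounded_var}, since the paired vector is $\bx_j$-measurable and $\bbE[\xi_j\mid\bx_j]=0$; the accumulated noise is then $\sum_j\alpha_j^2\,\bbE\|\xi_j\|^2$, giving $c_{\ref{const_sum_sq}}\sqrt{\sum_j\alpha_j^2}$, and the three cases of $c_{\ref{const_sum_sq}}$ are just the three bounds on $\bbE\|\xi_j\|^2$ — by $n\hat\sigma^2$, by $\sigma^2$, and, under \ref{assumption:L-mixing}, by the $L$-mixing machinery, where the cross term no longer vanishes and must be controlled through the $\Psi_2(\bz)$ decay after passing the $\ell$-Lipschitz $\bz$-dependence through $\nabla f$ (explaining the extra $2\ell$ factor). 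For (ii) under \ref{assumption:sub-Gaussian}, instead of expectations I would keep the squared recursion as a supermartingale and apply concentration: a sub-Gaussian Azuma bound for the martingale $\sum_j\langle\cdot,\bz_j\rangle$ with increments controlled by the diameter $D$ (yielding $c_{\ref{const_sum_sq_high_prob}}\sqrt{\log(2\delta^{-1})}\sqrt{\sum_j\alpha_j^2}$), a Bernstein/sub-exponential bound for $\sum_j\alpha_j^2\|\bz_j\|^2$ about its mean $\le n\hat\sigma^2\sum_j\alpha_j^2$ (yielding the $(c_{\ref{const_max_high_prob_1}}\log(2\delta^{-1})+c_{\ref{const_max_high_prob_2}})\sum_j\alpha_j^2$ term, with the $n+1$ reflecting the mean plus a deviation contribution), and a maximal inequality to pass from $\boldb_k$ to $\max_k\boldb_k$ over the subinterval.
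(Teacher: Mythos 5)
Your blueprint identifies the right ingredients and correctly traces the provenance of every constant (the Gronwall factor $e^{\ell}$ from $\sum_j\alpha_j\le 1$ on each subinterval, the diameter $D$ in the Azuma-type term, the $n+1$ from the chi-square-type mean-plus-deviation bound, the substitution $\epsilon=\log(2\delta^{-1})$ turning $(1-e^{-\epsilon})^2\ge 1-2e^{-\epsilon}$ into $1-\delta$), and your part (ii) sketch matches the paper's mechanism: exponential supermartingales for $\sum_i 2\lambda\alpha_i\bv_i^\top\bz_i$ and $\sum_i\lambda\alpha_i^2\|\bz_i\|^2$ with Doob's maximal inequality. But there is a genuine gap at exactly the point you flag as ``delicate.'' Your plan for term (c) --- a per-step $O(\alpha_j^{3/2})$ Euler-mismatch bound summed via $\sum_j\alpha_j^{3/2}\le(\max_j\sqrt{\alpha_j})\sum_j\alpha_j$ --- is not established, and it is not how the $\sqrt{\max_j\alpha_j}$ scaling actually arises. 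The paper does not run a direct recursion between $\bx_k$ and $\bx_{\tau_k}^{C_i}$; it proves Lemma~\ref{lem:x_A2C} via a four-way triangle inequality through the mean process $\bx^M$ and a Skorokhod-discretized process $\bx^D=\cS(\cD(\by^C))$, and the geometric step is Lemma~\ref{lem:x_C2D}: Tanaka's comparison inequality \citep{tanaka1979stochastic} applied \emph{globally} over the subinterval, which replaces the cross term between the \emph{solution} difference and the reflection measures (which is not small pointwise) by the cross term between the \emph{driving-trajectory} difference and the reflection measure. That driving-path gap is $O(u\max_j\alpha_j)$ in sup norm; paired via gauge/support duality ($x^\top y\le\gamma(x\vert\cX)\delta(y\vert\cX)$, $\gamma(\cdot\vert\cX)\le r^{-1}\|\cdot\|$ from the interior ball) against the total reflection mass $\int\delta(\bv_s^C\vert\cX)\,d\bmu^C\le(\tau_k-\tau_{k_0})Du+D^2$ from the energy telescoping of $\tfrac12\|\bx_t^C\|^2$, this yields a \emph{squared} distance of order $\max_j\alpha_j$, hence the square root. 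You correctly name the interior-ball and energy-telescoping inputs, but without Tanaka's inequality (or an equivalent device for handling the discrete projection residual against the continuous reflection) your recursion has no proof that the mismatch term closes, and a further chaining step (the paper's Lemma~\ref{lem:x_M2D}, a Gronwall argument on $\brho_t$ fed by Lemma~\ref{lem:x_C2D_continuous}) is needed to transport this bound to the discrete mean process.

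The second gap is the $L$-mixing case of part (i). You say the cross term ``no longer vanishes and must be controlled through the $\Psi_2(\bz)$ decay,'' but a direct bound of $\bbE\langle e_j,\xi_j\rangle$ by a mixing coefficient in a single recursion produces terms of order $\sum_j\alpha_j\,\psi_2(\cdot,\bz)$, which does not assemble into the stated $2\ell e^{\ell}\Psi_2(\bz)\sqrt{\sum_j\alpha_j^2}$ rate. The paper's proof of Lemma~\ref{lem:x_A2M_L_mixing} needs a lag-indexed family of intermediate processes $\bx^{M,s},\bx^{B,s}$ driven by $\bbE[\nabla f(\cdot,\bz_k)\vert\cF_{k-s}\vee\cG]$: for each fixed lag $s$ the pair difference is a martingale-type recursion whose increments are bounded by $2\ell\psi_2(s,\bz)$ (this is where the extra $2\ell$ enters, via the $\ell$-Lipschitz $z$-dependence), giving $2\ell\psi_2(s,\bz)\sqrt{\sum_j\alpha_j^2}$ per lag, and only the telescoping sum over $s$ recovers $\Psi_2(\bz)$. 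Your sketch supplies no mechanism of this kind, so the $\sqrt{\sum_j\alpha_j^2}$ scaling under \ref{assumption:L-mixing} is unproven. Finally, note that the lemma itself is, in the paper, an immediate corollary of Lemma~\ref{lem:x_A2C}: one sets $k_0=\cK(i)$, uses $\tau_k-\tau_{k_0}\le s_{i+1}-s_i\le 1$ to freeze the time-dependent factors, and monotonically enlarges the sums and maxima --- your handling of the within-step motion via the flow speed bound $u$ and $\alpha_k\le\sqrt{\alpha_k}$ correctly reproduces that reduction.
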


The next result shows that in the decaying step size case, the algorithm, $\bx_t^A$ converges to the jumping continuous process, $\bx_t^J$, asymptotically. Note that $\max_{k\in [\cK(i),\cK(i+1))}\boldb_k = \sup_{t\in [s_i,s_{i+1})}\|\bx_t^A-\bx_t^J\|$. 

\begin{proposition}
  \label{prop:convergence}
  Assume that $0<\alpha_k \le \frac{1}{2}$, $\sum_{k=0}^{\infty}\alpha_k=\infty$, and $\sum_{k=0}^{\infty}\alpha_k^2 <\infty$. 
  Let $h_i$ be the bounding function defined in Lemma~\ref{lem:x_A2C_b}. Set $\delta_i = \frac{\sum_{j=\cK(i)}^{\cK(i+1)-1}\alpha_j^2}{\sum_{k=0}^{\infty}\alpha_k^2}$. Then $\lim_{i\to\infty} h_i(\delta_i)=0$, and with probability $1$, the event
$$
\sup_{t\in [s_i,s_{i+1})}\|\bx_t^A-\bx_t^{J}\|> h_i(\delta_i)
$$
occurs at most finitely many times. In particular, $\lim_{t\to\infty}\|\bx_t^A-\bx_t^J\|=0$ with probability $1$.
\end{proposition}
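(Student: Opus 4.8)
The plan is to combine the high-probability bound of Lemma~\ref{lem:x_A2C_b}(ii) with the first Borel--Cantelli lemma, exploiting the specific choice of $\delta_i$ to make the relevant probabilities summable while still forcing $h_i(\delta_i)\to 0$. Throughout, I would write $S_i=\sum_{j=\cK(i)}^{\cK(i+1)-1}\alpha_j^2$ and $S_\infty=\sum_{k=0}^\infty\alpha_k^2$, so that $\delta_i=S_i/S_\infty$.

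First I would establish that $h_i(\delta_i)\to 0$. Because $\sum_k\alpha_k^2<\infty$, the tail sums vanish, so $S_i\to 0$, hence $\delta_i\to 0$ and $\log(2\delta_i^{-1})\to\infty$. Examining the three contributions to $h_i(\delta_i)$ separately: writing $\log(2\delta_i^{-1})=\log(2S_\infty)-\log S_i$, the cross term $c_{\ref{const_sum_sq_high_prob}}\sqrt{\log(2\delta_i^{-1})}\sqrt{S_i}=c_{\ref{const_sum_sq_high_prob}}\sqrt{\log(2\delta_i^{-1})\,S_i}$ and the term $c_{\ref{const_max_high_prob_1}}\log(2\delta_i^{-1})S_i$ are both controlled using the elementary fact that $x\log(1/x)\to 0$ as $x\downarrow 0$, while $c_{\ref{const_max_high_prob_2}}S_i\to 0$ directly; thus the entire quantity under the outer square root tends to $0$. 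For the remaining term $c_{\ref{const_max_sq_root}}\max_{j\in[\cK(i),\cK(i+1))}\sqrt{\alpha_j}$, I would note that $\sum_k\alpha_k=\infty$ forces $\tau_k\to\infty$, so $s_i\to\infty$ and therefore $\cK(i)\to\infty$; combined with $\alpha_k\to 0$, the maximum of $\alpha_j$ over indices $j\ge\cK(i)$ tends to $0$. Adding up, $h_i(\delta_i)\to 0$.

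Next I would apply Borel--Cantelli. Using the identification $\max_{k\in[\cK(i),\cK(i+1))}\boldb_k=\sup_{t\in[s_i,s_{i+1})}\|\bx_t^A-\bx_t^J\|$, Lemma~\ref{lem:x_A2C_b}(ii) gives $\bbP\big(\sup_{t\in[s_i,s_{i+1})}\|\bx_t^A-\bx_t^J\|>h_i(\delta_i)\big)\le\delta_i$. The key summability observation is that the index blocks $[\cK(i),\cK(i+1))$ partition $\bbN$, so $\sum_{i}S_i=\sum_{j}\alpha_j^2=S_\infty$ and consequently $\sum_i\delta_i=1<\infty$. The first Borel--Cantelli lemma then yields that, with probability $1$, the event $\sup_{t\in[s_i,s_{i+1})}\|\bx_t^A-\bx_t^J\|>h_i(\delta_i)$ occurs for only finitely many $i$.

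Finally, to obtain the pointwise-in-$t$ conclusion, I would fix a sample path in the probability-one event above; there is then a random index $I$ beyond which $\sup_{t\in[s_i,s_{i+1})}\|\bx_t^A-\bx_t^J\|\le h_i(\delta_i)$ for all $i\ge I$. Given $t$, let $i(t)$ be the block with $t\in[s_{i(t)},s_{i(t)+1})$; since the block lengths are bounded below by $\tfrac12$, $i(t)\to\infty$ as $t\to\infty$, so $\|\bx_t^A-\bx_t^J\|\le h_{i(t)}(\delta_{i(t)})\to 0$. I expect the main obstacle to be the first step: verifying that the logarithmic blow-up of $\log(2\delta_i^{-1})$ is dominated by the vanishing of $S_i$, i.e.\ controlling $S_i\log(1/S_i)$. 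The complementary ingredient, recognizing that the chosen $\delta_i$ telescopes to make $\sum_i\delta_i$ finite, is the crux that makes Borel--Cantelli applicable.
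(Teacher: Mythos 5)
Your proposal is correct and follows essentially the same route as the paper's own proof: applying Lemma~\ref{lem:x_A2C_b}(ii) blockwise, noting that the choice of $\delta_i$ telescopes to $\sum_{i=0}^{\infty}\delta_i=1$ so the first Borel--Cantelli lemma applies, and controlling the logarithmic blow-up via $\lim_{t\downarrow 0}t\log(t^{-1})=0$ together with $\max_{j\in[\cK(i),\cK(i+1))}\sqrt{\alpha_j}\to 0$. Your closing paragraph merely spells out the pointwise-in-$t$ deduction that the paper leaves implicit, so there is no substantive difference.
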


\subsection{Main Results}

Here we present the main results of the paper. All of the results in this section are proved in Appendix \ref{app:main_results}.

\begin{theorem} \label{thm:convergence_general}
  Assume that $0 <\alpha_k \le \frac{1}{2}$ for all integers $k\in [0,N-1]$. Let $\chi(N)$ and $\cK(i)$ be the integers defined in Section~\ref{sub:preliminaries}.
  \begin{itemize}
    \item  If Assumption~\ref{assumption:sub-Gaussian}, \ref{assumption:bounded_var}, or \ref{assumption:L-mixing} holds, then
  \begin{equation*} \label{eq:general_bound}
  \begin{aligned}
    &\frac{1}{\tau_N} \sum_{k =0}^{N-1} \alpha_k \bbE\left[\dist\left(-\nabla \bar{f}(\bx_k),\overline{\partial}_{\boldb_k} \cI_{\cX}(\bx_k) \right)^2\right] \\ 
    &\le 
    \frac{1}{\tau_N} \sum_{i=0}^{\chi(N)} \left(c_{\ref{const_general_1}} \sqrt{\sum_{j=\cK(i)}^{\cK(i+1)-1} \alpha_j^2} + c_{\ref{const_general_2}} \max_{j \in [\cK(i),\cK(i+1))} \sqrt{\alpha_j}\right) + \frac{Du}{\tau_N},
  \end{aligned}
\end{equation*}
where
$$
c_{\ref{const_general_1}}=(u+2u\ell)c_{\ref{const_sum_sq}}  \quad\textrm{and}\quad
    c_{\ref{const_general_2}}=(u +2 u \ell) c_{\ref{const_max_sq_root}}.
$$
% where $c_{\ref{const_general_1}} = (u + 2 u\ell) \sigma e^{\ell}$ under Assumption \ref{assumption:bounded_var}, $c_{\ref{const_general_1}} = 2 (u+2 u\ell)  \ell \Psi_2(s,\bz) e^{\ell}$ under Assumption \ref{assumption:L-mixing} and $c_{\ref{const_general_2}} = (u +2 u \ell) ( u + \sqrt{2 r^{-1}u (Du+D^2)})e^{\ell}$.
\item If Assumption~\ref{assumption:sub-Gaussian} holds, then for any collection of numbers $\delta_0,\ldots,\delta_{\chi(N)}$ such that $0 < \delta_i$ and $\sum_{i=0}^{\chi(N)}\delta_i < 1$, with probability at least $1-\sum_{i=0}^{\chi(N)}\delta_i$, the following bound holds:
  \begin{equation*} \label{eq:general_bound_hp}
  \begin{aligned}
    &\frac{1}{\tau_N} \sum_{k=0}^{N-1} \alpha_k \dist\left(-\nabla \bar{f}(\bx_k),\overline{\partial}_{h_{\zeta(k)}(\delta_{\zeta(k)})} \cI_{\cX}(\bx_k) \right)^2 
    %\\ 
    %&\le 
    \le \frac{u+2\ell u}{\tau_N} \sum_{i=0}^{\chi(N)}h_i(\delta_i)
+ \frac{Du}{\tau_N}.
  \end{aligned}
\end{equation*}
\end{itemize}
\end{theorem}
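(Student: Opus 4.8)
The plan is to run a descent argument along the restarted continuous process $\bx^{C_i}$ and transfer it to the algorithm through the deviation bounds of Lemma~\ref{lem:x_A2C_b}. Throughout I write $A_t := \|\Pi_{T_{\cX}(\bx_t^{J})}(-\nabla\bar{f}(\bx_t^{J}))\|$ and $G_k := \dist(-\nabla\bar{f}(\bx_k),\overline{\partial}_{\boldb_k}\cI_{\cX}(\bx_k))$. First I would establish the descent identity. Using the Moreau decomposition of $-\nabla\bar{f}$ into its $T_{\cX}$ and $\cN_{\cX}$ components, which are orthogonal because the tangent and normal cones are polar, for $t\in[s_i,s_{i+1}]$ one gets $\frac{d}{dt}\bar{f}(\bx_t^{C_i}) = \langle\nabla\bar{f}(\bx_t^{C_i}),\Pi_{T_{\cX}(\bx_t^{C_i})}(-\nabla\bar{f}(\bx_t^{C_i}))\rangle = -A_t^2$, and the same decomposition gives $A_t = \dist(-\nabla\bar{f}(\bx_t^{J}),\cN_{\cX}(\bx_t^{J}))$. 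Integrating yields the exact descent $\int_{s_i}^{s_{i+1}}A_t^2\,dt = \bar{f}(\bx_{s_i}^{A}) - \bar{f}(\bx_{s_{i+1}}^{C_i})$.

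Next I would produce a pointwise bound linking $G_k$ to $A_t$. For $t\in[\tau_k,\tau_{k+1})$ we have $\|\bx_t^{J}-\bx_k\|\le\boldb_k$, so $\cN_{\cX}(\bx_t^{J})\subseteq\overline{\partial}_{\boldb_k}\cI_{\cX}(\bx_k)$ directly from the definition of the Goldstein subdifferential; taking the normal-cone projection of $-\nabla\bar{f}(\bx_t^J)$ as a competitor and paying the Lipschitz error $\ell\boldb_k$ gives $G_k\le A_t+\ell\boldb_k$. The crucial step is to square this \emph{without} creating a $\boldb_k^2$ term: using $G_k\le u$ (since $0$ lies in the subdifferential) and $A_t\le u$ (projection non-expansiveness and $\|\nabla\bar f\|\le u$), I would write $G_k^2\le G_kA_t+\ell\boldb_kG_k\le(A_t^2+\ell u\boldb_k)+\ell u\boldb_k = A_t^2+2u\ell\boldb_k$. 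Integrating over $[\tau_k,\tau_{k+1})$, on which $G_k$ and $\boldb_k$ are constant, gives $\alpha_kG_k^2\le\int_{\tau_k}^{\tau_{k+1}}A_t^2\,dt+2u\ell\alpha_k\boldb_k$.

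Summing over $k\in[\cK(i),\cK(i+1))$ and then over $i$, the integrals combine into $\sum_i\int_{s_i}^{s_{i+1}}A_t^2\,dt = \sum_i(\bar f(\bx_{s_i}^A)-\bar f(\bx_{s_{i+1}}^{C_i}))$. Because each continuous piece is restarted at the algorithm value $\bx_{s_i}^A$, this is not a clean telescope, so I would split it into the telescoping part $\bar f(\bx_0)-\bar f(\bx_N)\le osc(\bar f)\le Du$ plus the break-point gaps $\bar f(\bx_{s_{i+1}}^A)-\bar f(\bx_{s_{i+1}}^{C_i})\le u\|\bx_{s_{i+1}}^A-\bx_{s_{i+1}}^{C_i}\|$, each controlled by the deviation $\sup_{t\in[s_i,s_{i+1})}\|\bx_t^A-\bx_t^J\| = \max_{k\in[\cK(i),\cK(i+1))}\boldb_k$. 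For the $2u\ell\sum_k\alpha_k\boldb_k$ term I would group by subinterval and use $\sum_{k\in[\cK(i),\cK(i+1))}\alpha_k = s_{i+1}-s_i\le 1$ to reduce each block to a single deviation. Taking expectations and invoking Lemma~\ref{lem:x_A2C_b}(i) to bound each $\bbE[\boldb_k]$ by $c_{\ref{const_sum_sq}}\sqrt{\sum_j\alpha_j^2}+c_{\ref{const_max_sq_root}}\max_j\sqrt{\alpha_j}$, the gap term contributes coefficient $u$ and the $\boldb$ term contributes $2u\ell$, which assemble into $c_{\ref{const_general_1}}=(u+2u\ell)c_{\ref{const_sum_sq}}$ and $c_{\ref{const_general_2}}=(u+2u\ell)c_{\ref{const_max_sq_root}}$; dividing by $\tau_N$ gives the first claim.

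For the high-probability statement the argument is deterministic once the deviations are controlled. By Lemma~\ref{lem:x_A2C_b}(ii) and a union bound over $i$, with probability at least $1-\sum_i\delta_i$ we have $\max_{k\in[\cK(i),\cK(i+1))}\boldb_k\le h_i(\delta_i)$ simultaneously for all $i$. On this event $\boldb_k\le h_{\zeta(k)}(\delta_{\zeta(k)})$, so enlarging the Goldstein radius only shrinks the distance, $\dist(-\nabla\bar f(\bx_k),\overline{\partial}_{h_{\zeta(k)}(\delta_{\zeta(k)})}\cI_{\cX}(\bx_k))\le G_k$, and repeating the summation above with $\boldb_k$ replaced by $h_{\zeta(k)}(\delta_{\zeta(k)})$ yields $(u+2\ell u)\sum_i h_i(\delta_i)+Du$ before dividing by $\tau_N$. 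I expect the main obstacle to be the break-point bookkeeping in the summation step: since the continuous process jumps at each $s_i$, the descent telescope is inexact, and one must show the induced gaps are absorbed exactly by the deviation bounds. In particular, it is essential to keep the clean inequality $G_k^2\le A_t^2+2u\ell\boldb_k$ rather than expanding $(A_t+\ell\boldb_k)^2$, which would leave an $\ell^2\boldb_k^2$ term that Lemma~\ref{lem:x_A2C_b} does not control, so that the constants collapse to precisely $(u+2u\ell)$.
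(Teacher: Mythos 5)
Your proof is correct and reaches the paper's bound with identical constants, but the mechanism converting continuous-time descent into the Goldstein stationarity measure is genuinely different from the paper's. The paper inserts the \emph{mixed} quantity $\Pi_{T_{\cX}(\bx_t^{C_i})}(-\nabla \bar f(\bx_t^A))$ into the integrated descent identity, expands via $(\|a\|-\|b\|)^2 \le \|a+b\|^2$, bounds the cross term by $2u\ell\,\|\bx_t^A-\bx_t^{C_i}\|$ using non-expansiveness and Lipschitzness, and only afterwards lower-bounds the mixed projection norm by the Goldstein distance through the Moreau decomposition (Lemma~\ref{lem:proj_tangent_normal}) and the inclusion $\cN_{\cX}(\bx_t^{C_i})\subseteq \overline{\partial}_{\|\bx_t^A-\bx_t^{C_i}\|}\cI_{\cX}(\bx_t^A)$. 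You never form the mixed term: you compare $G_k$ directly to the pure dissipation $A_t=\|\Pi_{T_{\cX}(\bx_t^J)}(-\nabla\bar f(\bx_t^J))\|$ by taking the normal-cone projection of $-\nabla\bar f(\bx_t^J)$ as a competitor in $\overline{\partial}_{\boldb_k}\cI_{\cX}(\bx_k)$ (valid since $\cN_{\cX}(\bx_t^J)\subseteq\overline{\partial}_{\boldb_k}\cI_{\cX}(\bx_k)$, the reverse orientation of the paper's inclusion) and paying the Lipschitz error $\ell\boldb_k$; your multiplicative squaring with $G_k, A_t \le u$ then yields $G_k^2 \le A_t^2 + 2u\ell\,\boldb_k$, recovering exactly the cross-term coefficient $2u\ell$ that the paper obtains from its norm expansion, so the constants $c_{\ref{const_general_1}}=(u+2u\ell)c_{\ref{const_sum_sq}}$ and $c_{\ref{const_general_2}}=(u+2u\ell)c_{\ref{const_max_sq_root}}$ coincide. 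The surrounding skeleton is the same in both proofs: the descent identity (your Moreau-orthogonality argument is equivalent to Lemma~\ref{lem:proj_tangent_cone_2}), the telescope-plus-gap decomposition across restart points with $osc(\bar f)\le Du$, absorbing the $\sum_k\alpha_k\boldb_k$ block via $\sum_{k}\alpha_k = s_{i+1}-s_i\le 1$, Lemma~\ref{lem:x_A2C_b}(i) in expectation, and Lemma~\ref{lem:x_A2C_b}(ii) with a union bound plus radius-monotonicity of the Goldstein distance for the high-probability claim. What your variant buys is a pointwise, integral-free inequality that makes explicit why no $\boldb_k^2$ term can appear --- the paper achieves the same only implicitly through its cross-term bound; what the paper's version buys is that the entire estimate stays inside one integral inequality (its Eq.~\eqref{eq:ave_sum}), the form reused verbatim in the corollaries. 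One shared fine point, cosmetic rather than a gap: at the endpoint $t=s_{i+1}$ the gap $\|\bx_{s_{i+1}}^A-\bx_{s_{i+1}}^{C_i}\|$ involves the iterate $\bx_{\cK(i+1)}$ and is not literally one of the $\boldb_k$ with $k<\cK(i+1)$; in expectation the paper covers it by invoking Lemma~\ref{lem:x_A2C} directly, and your write-up inherits the same bookkeeping.
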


\begin{remark} \label{rem:unconstrained_generalization}
  The convergence criterion in Theorem~\ref{thm:convergence_general} generalizes the common sum of norm square convergence criterion for unconstrained SGD. 
  In the unconstrained case, $\overline{\partial}_{\boldb_k}\cI_{\cX}(\bx_k)=\{0\}$, 
 which gives
\begin{equation} \label{eq:sum_of_squares}
\begin{aligned}[b]
  \frac{1}{\tau_N} \sum_{k=0}^{N-1}
\alpha_k \bbE\left[\dist\left(-\nabla \bar{f}(\bx_k),\overline{\partial}_{\boldb_k} \cI_{\cX}(\bx_k) \right)^2\right]  & = \frac{1}{\tau_N} \sum_{k=0}^{N-1} \alpha_k \bbE\left[\| \nabla \bar{f}(\bx_{\tau_k}^A) \|^2\right].
\end{aligned}
\end{equation}
In particular, when $\alpha_k \equiv \alpha $, then
$\eqref{eq:sum_of_squares} = \frac{1}{N}\sum_{k=0}^{N-1} \bbE\left[\| \nabla \bar{f}(\bx_{\tau_k}^A) \|^2\right]. $ In both the  variable or constant step size cases, \eqref{eq:sum_of_squares} matches convergence criteria for non-convex functions in \citep{bottou2018optimization}.

\end{remark}

\begin{corollary} \label{cor:convergence_Robbin_Monro}
  Assume that $0<\alpha_k \le \frac{1}{2} $ for all integers $k \ge 0$, $\sum_{k=0}^{\infty} \alpha_k = \infty$ and $\sum_{k=0}^\infty \alpha_k^2 <\infty$.
  \begin{itemize}
    \item If Assumption~\ref{assumption:sub-Gaussian}, \ref{assumption:bounded_var}, or \ref{assumption:L-mixing} holds, then
  \begin{align*} \label{eq:convergence_asymptotic}
        \lim_{N \rightarrow \infty} \frac{1}{\tau_N} 
        \sum_{k=0}^{N-1}\alpha_k\bbE\left[\dist\left(-\nabla \bar{f}(\bx_k),\overline{\partial}_{\boldb_k} \cI_{\cX}(\bx_k) \right)^2\right] 
        =0 \quad\textrm{and}\quad \lim_{k\to\infty}\bbE[\boldb_k]=0.
  \end{align*}
\item If Assumption~\ref{assumption:sub-Gaussian} holds, then with probability $1$
  $$
 \lim_{N \rightarrow \infty} \frac{1}{\tau_N} 
        \sum_{k=0}^{N-1}\alpha_k\dist\left(-\nabla \bar{f}(\bx_k),\overline{\partial}_{\boldb_k} \cI_{\cX}(\bx_k) \right)^2
        =0 \quad \textrm{and} \quad \lim_{k\to\infty}\boldb_k = 0.
  $$
\end{itemize}
\end{corollary}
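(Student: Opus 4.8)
The plan is to handle the two bullet points separately: the expectation statement follows as a deterministic consequence of Theorem~\ref{thm:convergence_general} and Lemma~\ref{lem:x_A2C_b}, while the almost-sure statement comes from combining the pathwise inequality underlying Theorem~\ref{thm:convergence_general} with the Borel--Cantelli conclusion of Proposition~\ref{prop:convergence}.

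For the expectation part, I would start from the bound in Theorem~\ref{thm:convergence_general} and show its right-hand side tends to $0$. Since $\sum_k\alpha_k=\infty$ gives $\tau_N\to\infty$, the term $Du/\tau_N\to0$. For the first sum, writing $a_i=\sum_{j=\cK(i)}^{\cK(i+1)-1}\alpha_j^2$, Cauchy--Schwarz gives $\sum_{i=0}^{\chi(N)}\sqrt{a_i}\le\sqrt{(\chi(N)+1)\sum_i a_i}$; using $\sum_i a_i\le\sum_{k=0}^\infty\alpha_k^2<\infty$ together with the spacing bound $s_{i+1}-s_i\ge\tfrac12$ (so $\chi(N)+1\le2\tau_N+1$) yields, after dividing by $\tau_N$, a quantity of order $\sqrt{\tau_N}/\tau_N\to0$. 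For the second sum I would use that $\sum_k\alpha_k^2<\infty$ forces $\alpha_k\to0$, hence $\max_{j\in[\cK(i),\cK(i+1))}\sqrt{\alpha_j}\to0$ as $i\to\infty$ (because $\cK(i)\to\infty$); a Cesàro argument — splitting off the finitely many large terms and bounding the remaining $\chi(N)+1\le2\tau_N+1$ terms by an arbitrary $\epsilon$ — shows this contribution vanishes. The claim $\bbE[\boldb_k]\to0$ follows the same way from Lemma~\ref{lem:x_A2C_b}(i): as $k\to\infty$ we have $\cK(\zeta(k))\to\infty$, so the first term is the tail $\sqrt{\sum_{j\ge\cK(\zeta(k))}\alpha_j^2}\to0$ and the second is $\max_{j\ge\cK(\zeta(k))}\sqrt{\alpha_j}\to0$.

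For the almost-sure statements, the convergence $\boldb_k\to0$ is immediate: Proposition~\ref{prop:convergence} gives $\|\bx_t^A-\bx_t^J\|\to0$ with probability $1$, and since $\boldb_k=\sup_{t\in[\tau_k,\tau_{k+1})}\|\bx_t^J-\bx_t^A\|\le\sup_{t\ge\tau_k}\|\bx_t^A-\bx_t^J\|$, it vanishes. The harder half is the almost-sure convergence of the weighted sum. Here I would work from the pathwise inequality that the proof of Theorem~\ref{thm:convergence_general} establishes before any expectation or concentration step, namely a bound of the form $\frac{1}{\tau_N}\sum_{k}\alpha_k\dist(-\nabla\bar{f}(\bx_k),\overline{\partial}_{\boldb_k}\cI_{\cX}(\bx_k))^2\le\frac{u+2\ell u}{\tau_N}\sum_{i=0}^{\chi(N)}B_i+\frac{Du}{\tau_N}$, where $B_i=\max_{k\in[\cK(i),\cK(i+1))}\boldb_k=\sup_{t\in[s_i,s_{i+1})}\|\bx_t^A-\bx_t^J\|$. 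Proposition~\ref{prop:convergence} (via Borel--Cantelli with $\delta_i=a_i/\sum_k\alpha_k^2$) guarantees that, with probability $1$, $B_i\le h_i(\delta_i)$ for all but finitely many $i$, and that $h_i(\delta_i)\to0$. Splitting $\sum_i B_i$ into the finitely many exceptional indices (a finite random constant, killed by $1/\tau_N$) and the tail bounded by $\sum_i h_i(\delta_i)$, then running the same Cesàro argument on $h_i(\delta_i)\to0$, gives $\frac{1}{\tau_N}\sum_i B_i\to0$ almost surely, and the pathwise inequality finishes the proof.

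The main obstacle is precisely this last passage, from the finite-horizon, high-probability bound of Theorem~\ref{thm:convergence_general} to an almost-sure limit. The difficulty is that the Goldstein radius in the target sum is the random quantity $\boldb_k$, whereas the stated high-probability inequality uses the deterministic surrogate $h_{\zeta(k)}(\delta_{\zeta(k)})$; because $\dist(\cdot,\overline{\partial}_\epsilon\cI_{\cX}(x))$ is non-increasing in $\epsilon$ and $\boldb_k\le h_{\zeta(k)}(\delta_{\zeta(k)})$ on the good event, the random-radius distance is the \emph{larger} of the two, so one cannot simply invoke the high-probability inequality in the reverse direction. This forces the argument through the pathwise bound together with the almost-sure eventual domination $B_i\le h_i(\delta_i)$ supplied by Proposition~\ref{prop:convergence}, after which the Cesàro averaging — controlled by the uniform block spacing $\tfrac12\le s_{i+1}-s_i\le1$ — does the rest.
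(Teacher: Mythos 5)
Your proposal is correct and takes essentially the same route as the paper: the expectation claim via the bound of Theorem~\ref{thm:convergence_general} with a Ces\`aro-type split using $\tau_N \ge \tfrac12\chi(N)$, and the almost-sure claim via the pathwise inequality \eqref{eq:goldstein_b_bound} combined with Proposition~\ref{prop:convergence}'s Borel--Cantelli conclusion that $\max_{k\in[\cK(i),\cK(i+1))}\boldb_k \le h_i(\delta_i)$ for all but finitely many $i$ and $h_i(\delta_i)\to 0$ --- including your (correct) observation that the monotonicity of $\epsilon\mapsto\dist(\cdot,\overline{\partial}_\epsilon\cI_{\cX}(x))$ blocks a direct use of the theorem's high-probability bound, which is exactly why the paper also routes through \eqref{eq:goldstein_b_bound}. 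The only minor deviations are that you handle the $\sqrt{\sum_j\alpha_j^2}$ term by Cauchy--Schwarz ($\sum_i\sqrt{a_i}\le\sqrt{(\chi(N)+1)\sum_i a_i}=O(\sqrt{\tau_N})$) where the paper uses the same tail-smallness Ces\`aro split as for the step-size term, and that you explicitly prove $\lim_{k\to\infty}\bbE[\boldb_k]=0$ from the tail bounds of Lemma~\ref{lem:x_A2C_b}(i), a step the paper leaves implicit.
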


\const{const_const_step_1}
\const{const_const_step_2}
\begin{corollary} \label{cor:convergence_constant_step}
  Assume that $0<\alpha_k=\alpha \le \frac{1}{2} $ for all integers $k \in[0,N-1]$. 
  \begin{itemize}
    \item If Assumption~\ref{assumption:sub-Gaussian}, \ref{assumption:bounded_var}, or \ref{assumption:L-mixing} holds, then
  \begin{equation}\label{eq:convergence_constant}
    \hspace{-30pt}
    \frac{1}{\tau_N}
\sum_{k=0}^{N-1}\alpha_k\bbE\left[\dist\left(-\nabla \bar{f}(\bx_k),\overline{\partial}_{\boldb_k} \cI_{\cX}(\bx_k) \right)^2\right] 
\le c_{\ref{const_const_step_1}} \sqrt{\alpha} + \frac{ c_{\ref{const_const_step_2}}}{N} \alpha^{-1} \:\textrm{ and }\:\bbE[\boldb_k]\le (c_{\ref{const_sum_sq}}+c_{\ref{const_max_sq_root}})\sqrt{\alpha},
  \end{equation} 
  % where $c_{\ref{const_const_step}} = 2 (1 + 2 \ell) u (\sigma e^{\ell} + e^{\ell} ( u + \sqrt{2 r^{-1}u (Du+D^2)})e^{\ell} )$ under Assumption \ref{assumption:bounded_var} and $c_{\ref{const_const_step}} = 2 (1 + 2 \ell) u (2 \ell \Psi(s, \bz) e^{\ell} + e^{\ell} ( u + \sqrt{2 r^{-1}u (Du+D^2)})e^{\ell} )$ under Assumption \ref{assumption:L-mixing}. 
where the constants are given by
  \begin{equation*}
    c_{\ref{const_const_step_1}} = 2(c_{\ref{const_general_1}}+c_{\ref{const_general_2}}) \quad  
    \textrm{and}\quad c_{\ref{const_const_step_2}} = Du + c_{\ref{const_general_1}}+c_{\ref{const_general_2}} . 
  \end{equation*}
  In particular, if $\alpha = O(N^{-2/3})$ then both bounds in \eqref{eq:convergence_constant} are of $O(N^{-1/3})$.
%  When $N\ge 2^{\frac{5}{2}}\frac{c_{\ref{const_const_step_2}}}{c_{\ref{const_const_step_1}}}$, the upper bound is minimized by taking 
%  $\alpha = (\frac{2 c_{\ref{const_const_step_2}} }{Nc_{\ref{const_const_step_1}}})^{\frac{2}{3}}$ to give:
%  $$
%    \frac{1}{\tau_N}
%\sum_{k=0}^{N-1}\alpha_k\bbE\left[\dist\left(-\nabla \bar{f}(\bx_k),\overline{\partial}_{\boldb_k} \cI_{\cX}(\bx_k) \right)^2\right] 
% \le (2^{\frac{1}{3}} + 2^{-\frac{2}{3}}) c_{\ref{const_const_step_1}}^{\frac{2}{3}} c_{\ref{const_const_step_2}} ^{\frac{1}{3}} N^{-\frac{1}{3}}
% 
\item If Assumption~\ref{assumption:sub-Gaussian} holds, then for any $\delta \in (0,1)$, with probability at least $1-\delta$:
  \begin{align}
    \label{eq:convergence_constant_hp}
    \hspace{-20pt}
\frac{1}{N}\sum_{k=0}^{N-1} \dist\left(-\nabla \bar{f}(\bx_k),\overline{\partial}_{q\left(\frac{\delta}{2\alpha N+1}\right)\alpha^{1/4}} \cI_{\cX}(\bx_k) \right)^2
%\\  &
\le   2 q\left(\frac{\delta}{2\alpha N+1}\right)\alpha^{1/4}+ \frac{q\left(\frac{\delta}{2\alpha N+1}\right)+Du}{\alpha N},
               \end{align}
               where
               $$
  q(\hat \delta)= (u +2 u \ell) \left( c_{\ref{const_sum_sq_high_prob}} \sqrt{ \log(2\hat\delta^{-1})}  + \left( c_{\ref{const_max_high_prob_1}} \log(2\hat \delta^{-1}) + c_{\ref{const_max_high_prob_2}}\right)   \right)^{1/2}
                 + (u +2 u \ell)c_{\ref{const_general_2}}.
               $$
               In particular if $\alpha = O(N^{-4/5})$, then the bound in \eqref{eq:convergence_constant_hp} is of $O\left(N^{-1/5}\sqrt{\log\left( N^{1/5}\delta^{-1}\right)}\right)$.
             \end{itemize}
\end{corollary}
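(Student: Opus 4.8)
The plan is to obtain both parts of Corollary~\ref{cor:convergence_constant_step} by specializing Theorem~\ref{thm:convergence_general} and Lemma~\ref{lem:x_A2C_b} to the constant step size $\alpha_k=\alpha$, where every interval-dependent quantity collapses to a clean power of $\alpha$. First I would record two counting facts from the construction in Section~\ref{sub:preliminaries}. Since each step advances continuous time by $\alpha$ and every subinterval $[s_i,s_{i+1}]$ has length at most $1$, its number of steps satisfies $(\cK(i+1)-\cK(i))\alpha = s_{i+1}-s_i \le 1$, so $\cK(i+1)-\cK(i)\le \alpha^{-1}$. Hence $\sum_{j=\cK(i)}^{\cK(i+1)-1}\alpha_j^2 = (\cK(i+1)-\cK(i))\alpha^2 \le \alpha$ and $\max_{j}\sqrt{\alpha_j}=\sqrt{\alpha}$, uniformly in $i$. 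Because the first $\chi(N)$ subintervals each have length at least $\tfrac12$ while the total length is $\tau_N=N\alpha$, the number of subintervals obeys $\chi(N)+1 \le 2N\alpha+1$.

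For the expectation bound I would substitute these estimates into the first display of Theorem~\ref{thm:convergence_general}. Each summand is then at most $(c_{\ref{const_general_1}}+c_{\ref{const_general_2}})\sqrt{\alpha}$; summing the $\chi(N)+1\le 2N\alpha+1$ terms and dividing by $\tau_N=N\alpha$ yields $(c_{\ref{const_general_1}}+c_{\ref{const_general_2}})\sqrt\alpha\,(2+\tfrac{1}{N\alpha})$. Using $\alpha\le\tfrac12$ to absorb the resulting $\alpha^{-1/2}$ into $\alpha^{-1}$ and adding $\tfrac{Du}{\tau_N}=\tfrac{Du}{N}\alpha^{-1}$ reproduces the claimed form $c_{\ref{const_const_step_1}}\sqrt\alpha+\tfrac{c_{\ref{const_const_step_2}}}{N}\alpha^{-1}$ with the stated constants. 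The bound $\bbE[\boldb_k]\le (c_{\ref{const_sum_sq}}+c_{\ref{const_max_sq_root}})\sqrt{\alpha}$ follows directly from Lemma~\ref{lem:x_A2C_b}(i) after the same substitutions. Finally, choosing $\alpha=O(N^{-2/3})$ gives $\sqrt\alpha=O(N^{-1/3})$ and $\tfrac{\alpha^{-1}}{N}=O(N^{-1/3})$, establishing the rate.

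For the high-probability bound I would apply the second part of Theorem~\ref{thm:convergence_general} with the uniform choice $\delta_i=\hat\delta:=\tfrac{\delta}{2N\alpha+1}$ for every $i$. Then $\sum_{i=0}^{\chi(N)}\delta_i=(\chi(N)+1)\hat\delta\le (2N\alpha+1)\hat\delta=\delta<1$, so the stated event holds with probability at least $1-\delta$. The crucial step is a uniform bound on $h_i(\hat\delta)$: substituting $\sum_j\alpha_j^2\le\alpha$ and $\max_j\sqrt{\alpha_j}=\sqrt\alpha$ into the definition of $h_i$, using $\alpha\le\sqrt\alpha$ inside the square root and $\sqrt\alpha\le\alpha^{1/4}$ outside, yields $h_i(\hat\delta)\le q(\hat\delta)\,\alpha^{1/4}$. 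Since $\overline{\partial}_\epsilon\cI_{\cX}$ is monotone in $\epsilon$, enlarging the radius from $h_{\zeta(k)}(\hat\delta)$ to $q(\hat\delta)\alpha^{1/4}$ only decreases each distance, so the left side of \eqref{eq:convergence_constant_hp} is dominated by the left side of the theorem's bound (noting $\tfrac{\alpha_k}{\tau_N}=\tfrac1N$). Summing $h_i(\hat\delta)$ over the $\le 2N\alpha+1$ intervals, dividing by $\tau_N$, and again collapsing lower-order powers via $\alpha\le\tfrac12$ recovers the right side $2q(\hat\delta)\alpha^{1/4}+\tfrac{q(\hat\delta)+Du}{\alpha N}$. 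Taking $\alpha=O(N^{-4/5})$ gives $\alpha^{1/4}=O(N^{-1/5})$ and $N\alpha=O(N^{1/5})$, so $\hat\delta^{-1}=O(N^{1/5}\delta^{-1})$ and $q(\hat\delta)=O(\sqrt{\log(N^{1/5}\delta^{-1})})$, producing the claimed $O(N^{-1/5}\sqrt{\log(N^{1/5}\delta^{-1})})$ rate.

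The interval counting and the expectation bound are routine. I expect the main obstacle to be the constant bookkeeping in the high-probability case: verifying that the single function $q(\hat\delta)\alpha^{1/4}$ simultaneously dominates every $h_i(\hat\delta)$ and that, after summing over intervals and dividing by $\tau_N$, the $2+\tfrac{1}{N\alpha}$ factor distributes correctly between the $\alpha^{1/4}$ term and the $\tfrac{1}{\alpha N}$ term so the constants align with those packaged into $q$ (here the identity $c_{\ref{const_general_2}}=(u+2u\ell)c_{\ref{const_max_sq_root}}$ and the factor $u+2u\ell$ built into $q$ must be tracked). Care is needed because the two contributions inside $h_i$ scale as $\alpha^{1/2}$ and $\alpha$ and both must be controlled by $\alpha^{1/4}$ using $\alpha\le\tfrac12$.
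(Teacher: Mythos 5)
Your proposal is correct and follows essentially the same route as the paper's proof: the same interval-counting estimates ($\cK(i+1)-\cK(i)\le \alpha^{-1}$, hence $\sum_j \alpha_j^2 \le \alpha$, and $\chi(N)\le 2\alpha N$ from $s_{i+1}-s_i\ge\tfrac12$), substitution of these into Theorem~\ref{thm:convergence_general} and Lemma~\ref{lem:x_A2C_b}, and a uniform union-bound allocation of $\delta$ across subintervals, where your direct choice $\delta_i=\frac{\delta}{2\alpha N+1}$ versus the paper's $\delta_i=\frac{\delta}{\chi(N)+1}$ (converted at the end via monotonicity of $q$ and of the Goldstein radius) is only a cosmetic difference. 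One normalization note on the step you yourself flagged: the inequality actually needed is $(u+2u\ell)\,h_i(\hat\delta)\le q(\hat\delta)\,\alpha^{1/4}$, i.e.\ $h_i(\hat\delta)\le \frac{q(\hat\delta)}{u+2u\ell}\,\alpha^{1/4}$, since the theorem's high-probability bound carries the prefactor $u+2u\ell$ that $q$ already absorbs — writing $h_i(\hat\delta)\le q(\hat\delta)\alpha^{1/4}$ and then multiplying the sum by $u+2u\ell$ would overshoot by that factor, but the substitution you describe in fact yields the sharper, correctly normalized bound exactly as in the paper.
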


\section{Discussion on Convergence Criteria} \label{sec:convergence_criterion}
In this section, we review various convergence criteria used for analyzing gradient-descent algorithms under different hypotheses.

%we collect the existing convergence criteria for gradient descent (GD) algorithms and discuss the issues why these convergence criteria are not suitable for the projected SGD where both constraints and stochasticity are presented. In the interest of this work, we mainly focus on the smooth optimization with convex compact constraints. Also, we assume $\nabla f(x, \bz) = \nabla \bar{f}(x) + \bz$ for easy demonstration when stochasticity exists and simply use $\bar{f}(x)$ to denote the objectives of deterministic optimization. 

For GD and projected GD algorithms with convex objectives, we can use $\bar{f}(x_k) - \bar{f}(x^*)$ to measure the convergence rate since all critical points, $x^*$, are actually global minima \citep{boyd2004convex, bubeck2015convex,nesterov2018lectures}. In the strongly convex case, $\|x_k - x^*\|^2$ is often used to measure the convergence \citep{nesterov2018lectures}, since minimizers are unique. A stochastic variation $\bbE[\bar{f}(\bx_k) - \bar{f}(x^*)]$ is used under the conditions that $\bar{f}$ is non-strongly convex and global minimum exists (not necessarily unique) \citep{moulines2011non} or if $\bar{f}$ satisfies the Polyak-Lojasiewicz condition \citep{khaled2020better, gower2021sgd}. The stochastic version $\bbE[\|\bx_k - x^*\|^2]$ is used when $\bar{f}$ is strongly convex for both unconstrained and projected SGD \citep{moulines2011non}.

For non-convex problems, algorithms may converge to critical points which are not necessarily global minima. In general, there could be multiple critical points. So, measures based on $\bar{f}(x)-\bar{f}(x^*)$ or $\|x-x^*\|$ with fixed critical points, $x^*$, will not be suitable. In asymptotic analysis, it is common to measure convergence of the algorithms to the set of critical points \citep{bianchi2022convergence,ermol1998stochastic,ermoliev2003solution}. 

For non-asymptotic bounds for non-convex problems, most analyses utilize variations on the size $\|\nabla \bar{f}(x)\|$ to measure stationarity.
For example, in unconstrained deterministic problems, \citep{nesterov2018lectures} uses $\min_{0 \le k < N } \|\nabla \bar{f}(x_k)\|$. For stochastic problems, $\min_{0 \le k <N}\bbE[ \|\nabla \bar{f}(\bx_k)\|]$ is used in \citep{khaled2020better,yuan2022general,lei2019stochastic, wu2020finite}, 
\\$\frac{1}{N} \bbE[\sum_{k=0}^{N-1} \|\nabla \bar{f}(\bx_k)\|^2]$ is used for constant step sizes in \citep{bottou2018optimization, zhang2020global, chen2023finite}, and $ \bbE[\frac{1}{\tau_N}\sum_{k=0}^{N-1} \alpha_k \|\nabla \bar{f}(\bx_k)\|^2]$ is used for diminishing step sizes in \citep{bottou2018optimization}. 
%\citet{ghadimi2013stochastic} proposed randomized SGD for better computationaly efficiency and the correspondinging stationary measure is $\frac{1}{L} \bbE[\|\nabla \bar{f}(x_{\br})\|^2]$ where $\br$ is a random variable under a known probability mass function and $L$ is the Lipschitz constant of $\nabla \bar{f}(x)$.
% \textcolor{red}{What is $L$?}
For a more thorough review of unconstrained SGD, see 
\citep{garrigos2023handbook}.

The most common measure for non-asymptotic analysis of projected SGD and its generalizations is based on Moreau envelopes. See \citep{davis2019stochastic,deng2021minibatch,zhu2023unified,gao2024stochastic,alacaoglu2020convergence,davis2025stochastic,fatkhullin2025can}. For $\lambda > 0$, the Moreau envelope and proximal map of a function $\psi:\bbR^n\to\bbR\cup\{+\infty\}$ is defined respectively by:
$$
\psi_{\lambda}(x)=\min_{y\in \bbR^n} \left(\psi(y)+\frac{1}{2\lambda}\|x-y\|^2 \right) \quad\textrm{and}\quad \mathrm{prox}_{\lambda \psi}(x)=
\argmin_{y\in \bbR^n} \left(\psi(y)+\frac{1}{2\lambda}\|x-y\|^2 \right).
$$
For projected SGD, the function $\psi=\bar{f}+\cI_{\cX}$ is used in Moreau envelope analysis. 

The gradient of the Moreau envelope is given by $\nabla \psi_{\lambda}(x)=\frac{1}{\lambda}\left(x-\mathrm{prox}_{\lambda \psi}(x) \right)$.
In \citep{davis2019stochastic} and subsequent work, convergence of projected SGD is measured via
$$
\frac{1}{\tau_N}\sum_{k=0}^n\alpha_k \bbE\left[\|\nabla \psi_{\lambda}(\bx_k) \|^2 \right],
$$
where $\lambda > 0$ is a fixed number with bounds scaling with $\lambda^{-1}$. 

While the Moreau envelope measure resembles the common sum-of-squared norms measure from unconstrained SGD, it does not reduce to the  value in the unconstrained case. Indeed, if 
$$
\psi(x)=\bar{f}(x)=\frac{1}{2}x^\top Px + q^\top x,
$$
with positive definite $P$, 
then $\nabla \bar{f}(x)=Px+q$ and 
$\nabla \psi_{\lambda}(x)=(\lambda P+I)^{-1}(P x + q)$. For more complex objectives, the relationship between $\nabla \bar{f}$ and $\nabla \psi_{\lambda}$ will be more complex. These differences make direct comparison of Moreau envelope results with work on unconstrained SGD challenging.

%By no means we intend to enumerate all the references, so the citations above only include a small fraction of the work in that regard.

An alternative measure, proposed in \citep{ghadimi2016mini} and used later in \citep{lan2024projected} is 
$$
\bbE \left[ \frac{1}{\alpha_\br^2} \left\|\bx_{\br} - \Pi_\cX\left(\bx_{\br} -  \alpha_k \frac{1}{m_{\br}} \sum_{i=1}^{m_{\br}}\nabla f(\bx_{\br}, \bz_{\br,i})\right)\right\|^2\right]
$$
where $\br$ is a randomly drawn iteration and $\{\bz_{\br,1},\ldots,\bz_{\br,m_{\br}}\}$ is a minibatch of noise variables. (Note that the convex projection here is a special case covered by their theory.)

The convergence measures in \citep{ghadimi2016mini,lan2024projected} are modifications of the reduced gradient described in \citep{nesterov2018lectures}. Related measures are commonly used in deterministic settings. In projected GD, the measure $\frac{1}{\alpha_k} \|x_k - \Pi_\cX(x_k -  \alpha_k \nabla \bar{f}(x_k))\|$ ($\alpha_k=1$) is used in \citep{royer2024full},  the measure $\text{dist}\left(- \nabla \bar{f}(x_k), \cN_{\cX}(x_k)\right)$ is used in \citep{olikier2025projected}, while $\|T_{\cX(x_k)}(-\nabla \bar{f}(x_k))\|$ is used in \citep{di2024stationarity,calamai1987projected, balashov2022error}.

The example below shows that it is impossible to achieve low error with respect to the measure from \citep{ghadimi2016mini} and related measures, unless the variance of the randomness is reduced. As a result, to achieve low error, \citep{ghadimi2016mini,lan2024projected} propose large  mini-batches. 
Similar limitations appear  in the work of \citep{he2025non,xie2025tackling}.

\begin{example} \label{example}
Let $f(x,z) = -x+xz$ so that $\nabla_x f(x,z)=-1 + z$. Set the constraint to be $\cX = [-1,1]$. Let $\bz_k$ follows the scaled binary Rademacher distribution such that $\bbP(\bz_k = 2) = 0.5$ and $\bbP(\bz_k = -2) = 0.5$.

The normal cone of $\cX$ is given by:
\begin{align*}
  \cN_{\cX}(x) = \begin{cases}
  0 & x \in (-1,1) \\
  (-\infty, 0] & x = -1 \\
  [0, \infty) & x= 1.
  \end{cases}
\end{align*}

Projected SGD becomes
\begin{align*}
  \bx_{k+1} = \Pi_{\cX} \left( \bx_k + \alpha_k (1 -\bz_k)\right).
\end{align*}

Note that $\nabla \bar{f}(x)=-1$ for all $x$. Furthermore, for all $y\in\cX$, 
\begin{equation}
  \label{eq:discontinuousDistance}
\dist(-\nabla\bar{f}(x),\cN_{\cX}(y))=\begin{cases}
  1 & y\in [-1,1) \\
  0 & y=1.
\end{cases}
\end{equation}

Say that $0<\alpha_k< \frac{1}{2}$ and $\alpha_{k+1}\le \alpha_k$. Then for any $\bx_k \in \cX$,  we  have $\bx_{k+1} \in (-1, 1-\alpha_{k+1}]$ with probability at least $\frac{1}{2}$.  Thus, for all $k\ge 0$, with probability at least $\frac{1}{2}$, we have 
\begin{equation*}
  \frac{1}{\alpha_k}\left\|\bx_k-\Pi_{\cX}\left(\bx_k-\alpha_k\nabla f(\bx_k,\bz_k)\right) \right\|= |1-\bz_k| \ge 1 
\end{equation*}
and 
\begin{gather*}
  \frac{1}{\alpha_{k+1}}\left\|\bx_{k+1}-\Pi_{\cX}(\bx_{k+1}-\alpha_{k+1}\nabla\bar{f}(\bx_{k+1})) \right\| = \\
  \dist(-\nabla \bar{f}(\bx_{k+1}),\cN_{\cX}(\bx_{k+1})) \!= \!\|\Pi_{T_{\cX}(\bx_{k+1})}(-\nabla \bar{f}(\bx_{k+1}))\|
  =1.
\end{gather*}
So, the average of any of these criteria will be at least $\frac{1}{2}$.
\end{example}

While these common convergence metrics remain bounded away from zero, on average, Fig.~\ref{fig:simulations} (along with the theory in this paper) shows that the projected SGD solutions closely follow the continuous-time trajectory, $\bx_t^C$, when the step size is small. The issue is that these measures amplify small random  fluctuations near the boundary. 
%In this case, the continuous-time solution reaches the optimal solution, $1$, in a finite amount of time, and then remains there forever. For small step sizes, after a finite amount of time, the solution makes small random fluctuations near the optimal solution.

%The convergence measures reviewed can vary rapidly (and in some cases, discontinuously) near the boundary. In the example, the optimal solution is on the boundary, and the convergence measures amplify the random variations of the projected SGD solutions, even
%when the step size is small, and the solution remains near the optimal point. The effect of random fluctuations can be mitigated if the noise variance can be reduced.  In \cite{ghadimi2016mini}, the noise variance is reduced by utilizing mini-batches. Still, the analysis of projected SGD of that paper has a nonvanishing term of $O(1/m)$ when $m$ is  the mini-batch size. Similarly, the work by \citet{lan2024projected} also obtains the convergence guarantee relying on a batch size growing with the iterations. 
%The example above indicates that such terms are likely unavoidable when using the existing convergence criteria. 

\section{Conclusion and Future Work} \label{sec:conclusion}

In this work, we gave a new convergence analysis of projected SGD where stationarity is measured by the distance of the gradient from the Goldstein subdifferential generated by the constraints. This proposed convergence measure allows direct comparison with results on unconstrained problems and does not require variance reduction techniques to achieve convergence. Our results hold in expectation for both IID and mixing data sequences, giving both asymptotic convergence and non-asymptotic bounds. In the special case of IID data sequences, we obtain asymptotic convergence almost surely and give the first non-asymptotic high probability bounds.    

Future work is needed to clarify the relation of our results and prior work. In particular, tighter bounds are achieved with respect to the Moreau envelope in \citep{davis2019stochastic}, and it would be useful to understand if this is due to fundamental differences in the measure or limitations of our analytic technique. 
Extensions of the work include the analysis of adaptive step size rules, as commonly arise in applications, or 
incorporation into more complex algorithmic schemes, such as policy gradient algorithms within actor-critic reinforcement learning algorithms. 

% Adaptive step size. Variants of projected SGD. Optimization on manfifold to handle certain constraints.

\bibliography{cool-refs}
%\bibliography{yourbibfile}

\appendix

\section{Convergence Analysis via Intermediate Processes}  \label{sec:convergence_intermediate}

In this section, we introduce some intermediate processes to bound $\bbE\left[\left\|\bx_t^A - \bx_t^C\right\| \right]$. The bound of $\bbE\left[\left\|\bx_t^A - \bx_t^C\right\| \right]$ and  all the supporting lemmas are shown in Appendix~\ref{app:supporting_lemmas}. The ideas of using the intermediate processes and the proofs on the quantitative bounds are similar to those in \citep{lamperski2021projected,zheng2022constrained}.

Other than the \textit{continuous} time process $\bx_t^C$ defined in \eqref{eq:x_C}, we further introduce the \textit{mean} process, $\bx_{\tau_k}^M$, and the \textit{discretized} process, $\bx_{\tau_k}^D$ with $\bx_{\tau_{k_0}}^A = \bx_{\tau_{k_0}}^M = \bx_{\tau_{k_0}}^C = \bx_{\tau_{k_0}}^D$ where integers $k_0 \le k$:
\begin{align} 
    \bx_{\tau_{k+1}}^M = \Pi_{\cX} \left(\bx_{\tau_k}^M - \alpha_k \nabla \bar{f}(\bx_{\tau_k}^M)\right).
\end{align}

The discretized processes $\bx_{\tau_k}^D$ uses the form of Skorokhod solution introduced in Appendix \ref{app:skorokhod}. 

Here is some preliminary to define $\bx_{\tau_k}^D$. To enable the construction of Skorokhod problem, which is key to prove Lemma~\ref{lem:x_C2D} relying on Lemma 2.2 (i) in \citep{tanaka1979stochastic}, we first show that the alternative representation of projected ODE \eqref{eq:x_C}:
\begin{align} \label{eq:x_C_Normal}
  \frac{d}{dt} \bx_t^C = - \nabla \bar{f}(\bx_t^C) - \bv_t^C
\end{align}
where $\bv_t^C \in \cN_{\cX}(\bx_t^C)$. 

% \textcolor{red}{Put in more discussion about continuous-time problems.}
The projected ODE \eqref{eq:x_C_Normal} in the context of constrained stochastic approximation can be found in \citep{kushner2003stochastic} and these two equivalent forms of projected ODE are also mentioned in \citep{borowski2025convergence} but there was no proof. The equivalence of \eqref{eq:x_C} and \eqref{eq:x_C_Normal} follows from the Moreau decomposition (e.g. \cite{hiriart2004fundamentals}), which implies that that for any vector $g \in \bbR^n$, $\Pi_{T_{\cX}(x)}(g) = g - \Pi_{\cN_{\cX}(x)}(g)$. In particular, $\bv_t^C = \frac{\Pi_{\cN(x)} (- \nabla \bar{f}(\bx_t^C))}{\|\Pi_{\cN(x)} (- \nabla \bar{f}(\bx_t^C))\|}$ when $\bx_t^C \in \partial \cX$.

Then, the projected ODE \eqref{eq:x_C_Normal} can be written as 
\begin{align} \label{eq:ODE_reflected}
  d \bx_{t}^C = - \nabla \bar{f}(\bx_{t}^C) dt - \bv_t^C d\bmu^C(t).
\end{align}
Here, $- \int_0^t \bv_t^C d\bmu^C(t)$ is a bounded variation reflection process that keeps $\bx_t^C \in \cX$ for all $t \in [0,\tau_N]$, as long as $\bx_0^C \in \cX$. The measure, $\bmu^C$, is non-negative and supported on $\{s \vert \bx_s^C \in \partial \cX\}$, while  $\bv_s^C \in \cN_{\cX} (\bx_s^C)$. With these conditions on \eqref{eq:ODE_reflected}, $\bv_t^C d\bmu^C(t)$ is uniquely defined and $\bx^C$ is the unique solution to the Skorokhod problem for a process defined below:
\begin{align}
    \by_{t}^C = \bx_0^C - \int_{0}^t \nabla \bar{f}(\bx_s^C) ds.
\end{align}

More details on Skorokhod problems are given in Appendix \ref{app:skorokhod}. 

In the following, we denote the Skorokhod solution for given trajectory, $\by$, by $\cS(\by)$.

Let  $\by_t^D = \by_{\tau_k}^C$ for all $t \in [\tau_k, \tau_{k+1})$. Such discretization operator is denoted by $\cD(\cdot)$. Then, we define $\bx^D = \cS(\cD(\by^C))$, i.e. $\bx_t^D = \by_{\tau_k}^C + \bphi_t^D$ for all $t \in [\tau_k, \tau_{k+1})$, where $\bphi_t^D = - \int_0^t \bv_t^C d\bmu^C(t)$.  

Therefore, we have
\begin{align}
\nonumber
    \bx_{\tau_{k+1}}^D &= \Pi_{\cX}\left(\bx_{\tau_k}^D + \by_{\tau_{k+1}}^C - \by_{\tau_{k}}^C \right) \\
    & =   \Pi_{\cX}\left(\bx_{\tau_k}^D - \int_{\tau_k}^{\tau_{k+1}} \nabla \bar{f}(\bx_t^C) dt \right).
\end{align}

The intermediate processes are used to bound the individual terms from the following triangle inequality:
\begin{align*}
  \|\bx_{t}^A - \bx_{t}^C\| \le \|\bx_{\tau_k}^A - \bx_{\tau_k}^M\| + \|\bx_{\tau_k}^M- \bx_{\tau_k}^D\| + \|\bx_{\tau_k}^D - \bx_{\tau_k}^C\| +\|\bx_{t}^C - \bx_{\tau_k}^C\|.
\end{align*}

% Following similar steps above, we can also obtain for $k \ge k_0$
% \begin{align}
%     \bx_{\tau_{k+1}}^M =  \Pi_{\cX}(\bx_{\tau_k}^M  - \alpha_k \nabla \bar{f}(\bx_{\tau_k}^M)).
% \end{align}

\section{Proof of Main Results} \label{app:main_results}
The section presents the proofs of the main results. 

\noindent
\begin{proofof}{Theorem~\ref{thm:convergence_general}} 
\label{proof:theorem_general}

Firstly, we have
\begin{equation} \label{eq:monotonicity_x_C}
  \begin{aligned}[b]
  \bar{f}(\bx_{s_{i+1}}^C) - \bar{f}(\bx_{s_i}^C)  &=  \int_{0}^{\tau_N} \frac{d}{dt} \bar{f}(\bx_t^C) dt \\
  & = \int_0^{\tau_N} \nabla \bar{f}(\bx_t^C)^\top \Pi_{T_{\cX}(\bx_t^C)}\left(- \nabla \bar{f}(\bx_t^C)\right) dt \\
  & = - \int_0^{\tau_N} \left\|\Pi_{T_{\cX}(\bx_t^C)}\left(- \nabla \bar{f}(\bx_t^C)\right) \right\|^2 dt 
\end{aligned}
\end{equation}
where the first and second equalities use the fundamental theorem of calculus and the chain rule respectively and the last equality uses Lemma \ref{lem:proj_tangent_cone_2} in Appendix~\ref{app:variational_geometry}. 

  For one time interval, $[s_i, s_{i+1}]$, we have the following decomposition:
\begin{align*}
    \bar{f}(\bx_{s_{i+1}}^A) - \bar{f}(\bx_{s_i}^A) &= \bar{f}(\bx_{s_{i+1}}^A) - \bar{f}(\bx_{s_i}^A) -\left(\bar{f}(\bx_{s_{i+1}}^{C_i}) - \bar{f}(\bx_{s_i}^{C_i})\right) + \left(\bar{f}(\bx_{s_{i+1}}^{C_i}) - \bar{f}(\bx_{s_i}^{C_i})\right) \\
    & = \bar{f}(\bx_{s_{i+1}}^A) - \bar{f}(\bx_{s_{i+1}}^{C_i}) - \int_{s_{i}}^{s_{i+1}} \left\|\Pi_{T_{\cX}(\bx_t^{C_i})}\left(- \nabla \bar{f}(\bx_t^{C_i})\right)\right\|^2 dt 
    % & = \bar{f}(\bx_T^A) - \bar{f}(\bx_T^C) - \int_0^T \|\Pi_{T_{\cX}(\bx_t^C)}(- \nabla \bar{f}(\bx_t^A)) + \Pi_{T_{\cX}(\bx_t^C)}(- \nabla \bar{f}(\bx_t^C)) - \Pi_{T_{\cX}(\bx_t^C)}(- \nabla \bar{f}(\bx_t^A))\|^2 dt 
  \end{align*}
  where the second equality uses $\bx_{s_i}^A = \bx_{s_i}^{C_i}$ and \eqref{eq:monotonicity_x_C}.

  Adding and subtracting $\Pi_{T_{\cX}(\bx_t^{C_i})}\left(- \nabla \bar{f}(\bx_t^A)\right)$ inside the norm of the second term on the RHS and rearranging gives
  \begin{align*}
      & \int_{s_i}^{s_{i+1}} \left\|\Pi_{T_{\cX}(\bx_t^{C_i})}\left(- \nabla \bar{f}(\bx_t^A)\right) + \Pi_{T_{\cX}(\bx_t^{C_i})}\left(- \nabla \bar{f}(\bx_t^{C_i})\right) - \Pi_{T_{\cX}(\bx_t^{C_i})}\left(- \nabla \bar{f}(\bx_t^A)\right) \right\|^2 dt  \\
      &= \bar{f}(\bx_{s_{i+1}}^A) - \bar{f}(\bx_{s_{i+1}}^{C_i}) - \left(\bar{f}(\bx_{s_{i+1}}^A) - \bar{f}(\bx_{s_i}^A)\right) \\
       \Rightarrow & \int_{s_i}^{s_{i+1}} \left( \left\|\Pi_{T_{\cX}(\bx_t^{C_i})}\left(- \nabla \bar{f}(\bx_t^A)\right)\right\| -\left\| \Pi_{T_{\cX}(\bx_t^{C_i})}\left(- \nabla \bar{f}(\bx_t^{C_i})\right) - \Pi_{T_{\cX}(\bx_t^{C_i})}\left(- \nabla \bar{f}(\bx_t^A) \right) \right\| \right)^2 dt  \\
       &\le \bar{f}(\bx_{s_{i+1}}^A) - \bar{f}(\bx_{s_{i+1}}^{C_i}) - \left(\bar{f}(\bx_{s_{i+1}}^A) - \bar{f}(\bx_{s_i}^A)\right) \\
      \Rightarrow & \int_{s_i}^{s_{i+1}} \left\|\Pi_{T_{\cX}(\bx_t^{C_i})}\left(- \nabla \bar{f}(\bx_t^A) \right) \right\|^2 dt \\
      &\le  \left(  \bar{f}(\bx_{s_{i+1}}^A) - \bar{f}(\bx_{s_{i+1}}^{C_i}) - \left(\bar{f}(\bx_{s_{i+1}}^A) - \bar{f}(\bx_{s_i}^A)\right)\right) \\
      &+ 2 \int_{s_i}^{s_{i+1}} \left\|\Pi_{T_{\cX}(\bx_t^{C_i})}\left(- \nabla \bar{f}(\bx_t^A)\right) \right\| \left\|  \Pi_{T_{\cX}(\bx_t^{C_i})}\left(- \nabla \bar{f}(\bx_t^{C_i})\right) - \Pi_{T_{\cX}(\bx_t^{C_i})}\left(- \nabla \bar{f}(\bx_t^A)\right)\right\| dt\\
      & \le u \left\|\bx_{s_{i+1}}^A - \bx_{s_{i+1}}^{C_i}\right\| - \left( \bar{f}(\bx_{s_{i+1}}^A) - \bar{f}(\bx_{s_i}^A)\right) + 2u \ell  \int_{s_i}^{s_{i+1}}  \left\|\bx_t^A - \bx_t^{C_i} \right\| dt
  \end{align*}
  where the right arrow uses the fact that $(\|a\| - \|b\|)^2 \le \|a+b\|^2$ for all $a, b \in \bbR^n$.  The last inequality uses the fact that $\bar{f}$ is $u$-Lipschitz and $\nabla \bar{f}$ is $\ell$-Lipschitz as well as the non-expansiveness of the convex projection.

 Summing over $\chi(N)+1$ terms gives
 \begin{equation} \label{eq:sum_integral_subintervals}
  \begin{aligned}
    & \sum_{i=0}^{\chi(N)} \int_{s_i}^{s_{i+1}} \left\|\Pi_{T_{\cX}(\bx_t^{C_i})}\left(- \nabla \bar{f}(\bx_t^A)\right) \right\|^2 dt \\
    & \le u \sum_{i=0}^{\chi(N)} \left\|\bx_{s_{i+1}}^A - \bx_{s_{i+1}}^{C_i} \right\| - \sum_{i=0}^{\chi(N)}  \left(\bar{f}(\bx_{s_{i+1}}^A) - \bar{f}(\bx_{s_i}^A)\right) + 2u \ell  \sum_{i=0}^{\chi(N)}\int_{s_i}^{s_{i+1}}  \left\|\bx_t^A - \bx_t^{C_i}\right\| dt \\
    & = u \sum_{i=0}^{\chi(N)} \left\|\bx_{s_{i+1}}^A - \bx_{s_{i+1}}^{C_i}\right\| 
    +\left(\bar{f}(\bx_0^A)-\bar{f}(\bx_{\tau_N}^A)\right)
    + 2u \ell  \sum_{i=0}^{\chi(N)}\int_{s_i}^{s_{i+1}}  \left\|\bx_t^A - \bx_t^{C_i} \right\| dt \\
    & \le u \sum_{i=0}^{\chi(N)} \left\|\bx_{s_{i+1}}^A - \bx_{s_{i+1}}^{C_i} \right\|  + 2u \ell  \sum_{i=0}^{\chi(N)}\int_{s_i}^{s_{i+1}}  \left\|\bx_t^A - \bx_t^{C_i} \right\| dt +osc(\bar{f}) 
  %  &\le (u+u\ell)\sum_{k=0}^{N-1}\boldb_k + \osc(\bar{f}).
  \end{aligned}
\end{equation}
where the equality uses a telescoping sum.

% \textcolor{red}{is this extra?} and the last inquality uses the definition of $\boldb_k$ and that $\alpha_k\le 1/2$ .

Now, we examine the expected value case, which holds for all of the assumptions. 
% \textcolor{red}{Can possibly use this to get near monotone decrease of $\bar{f}(\btau_i)$ }

% For $t \in [s_i, s_{i+1}]$, 
% \begin{align*}
%   \|\bx_t^A - \bx_t^C\| \le \|\bx_{s_{i}}^A - \bx_{s_{i}}^C \| + \|\bx_{t}^C - \bx_{s_{i}}^C\| \\
%   & \le \|\bx_{s_{i+1}}^A - \bx_{s_{i+1}}^C \|
% \end{align*}
Lemma~\ref{lem:x_A2C} gives the bound below 
\begin{align*}
    \bbE\left[\left\|\bx_{s_{i+1}}^A - \bx_{s_{i+1}}^{C_i} \right\|\right] \le g_1(s_{i+1} - s_i) \sqrt{\sum_{ \{j \vert   s_i \le \tau_j < s_{i+1}\}} \alpha_j^2} + g_2(s_{i+1} -s_i) \max_{\{j \vert s_i \le \tau_j \le s_{i+1}\}} \sqrt{\alpha_j}
\end{align*}
where $g_1(q) = \sigma e^{\ell q} $ under Assumption \ref{assumption:bounded_var}, $g_1(q) = 2 \ell \Psi_2 (\bz) e^{\ell q} $ under Assumption \ref{assumption:L-mixing} and $g_2(q) = e^{\ell q} \left( u +  \sqrt{2 r^{-1}  u \left(q D u  + D^2 \right)   }  \right) $.

Therefore, we have 
\begin{equation} \label{eq:ave_sum}
\begin{aligned}
  &\frac{1}{\tau_N} \sum_{i=0}^{\chi(N)} \bbE\left[ \int_{s_i}^{s_{i+1}} \left\|\Pi_{T_{\cX}(\bx_t^{C_i})}\left(- \nabla \bar{f}(\bx_t^A)\right) \right\|^2 dt \right] \\
  & \le \frac{1}{\tau_N} \left(\sum_{i=0}^{\chi(N)} \left(u \bbE\left[  \left\|\bx_{s_{i+1}}^A - \bx_{s_{i+1}}^{C_i} \right\| \right] + 2 u \ell (s_{i+1} - s_i) \max_{j \in [s_i, s_{i+1}]} \bbE\left[  \left\|\bx_{j}^A - \bx_{j}^{C_i} \right\| \right] \right) + osc(\bar{f}) \right) \\
  & \le \frac{1}{\tau_N} \left(\sum_{i=0}^{\chi(N)}(u + 2 u \ell (s_{i+1} - s_i)) \left( g_1(s_{i+1} - s_i) \sqrt{\sum_{ \{j \vert   s_i \le \tau_j < s_{i+1}\}} \alpha_j^2} \right. \right.\\
  &\left. \left. \hspace{160pt} + g_2(s_{i+1} -s_i) \max_{\{j \vert s_i \le \tau_j \le s_{i+1}\}} \sqrt{\alpha_j}\right) + osc(\bar{f}) \right).
\end{aligned}
\end{equation}

Note that if $t\in [s_i,s_{i+1})$, we must have that $\bx_t^A=\bx_{\tau_k}^A=\bx_k$ for some integer $\cK(i)\le k < \cK(i+1)$. In this case, $\|\bx_t^A-\bx_t^{C_i}\|\le \boldb_k$, by the definition of $\boldb_k$.  

Lemma~\ref{lem:proj_tangent_normal}, followed by the definitions of the convex projection and the expression for the Goldstein subdifferential give 
\begin{align*}
  \left\|\Pi_{T_{\cX}(\bx_t^{C_i})}\left(- \nabla \bar{f}(\bx_t^A)\right)\right\| &= \left\|-\nabla \bar{f}(\bx_t^A)-\Pi_{\cN_{\cX}(\bx_t^{C_i})}(-\nabla \bar{f}(\bx_t^{A})\right\| \\
                                                                                  &= \dist\left(-\nabla \bar{f}(\bx_t^A),\cN_{\cX}(\bx_t^{C_i}) \right) \\
                                                                                  &\ge \dist\left(-\nabla \bar{f}(\bx_t^A),\overline{\partial}_{\|\bx_t^A-\bx_t^{C_i}\|}\cI_{\cX}(\bx_t^A)\right) \\
                                                                                  &\ge \dist(-\nabla \bar{f}(\bx_k),\overline{\partial}_{\boldb_k}\cI_{\cX}(\bx_k) ).
\end{align*}

It then follows that 
$$
\int_{\tau_k}^{\tau_{k+1}} \left\|\Pi_{T_{\cX}(\bx_t^{C_i})}\left(- \nabla \bar{f}(\bx_t^A)\right) \right\|^2 dt \ge \alpha_k  
\dist(-\nabla \bar{f}(\bx_k),\overline{\partial}_{\boldb_k}\cI_{\cX}(\bx_k) )^2.
$$
Plugging this lower bound into the integrals on the left of \eqref{eq:ave_sum} and using that $s_{i+1}-s_i\le 1$ gives the bound on the expected value.

Now, we turn to the special case that Assumption~\ref{assumption:sub-Gaussian} holds, and give a bound in high probability.

Plugging the definition of $\boldb_k$ into \eqref{eq:sum_integral_subintervals} and using the bound on the Goldstein subdifferentials above gives
\begin{align}
  \label{eq:goldstein_b_bound}
  \sum_{k=0}^{N-1} \alpha_k  
  \dist(-\nabla \bar{f}(\bx_k),\overline{\partial}_{\boldb_k}\cI_{\cX}(\bx_k) )^2 &\le
  (u+2u\ell)
  \sum_{i=0}^{\chi(N)} \max_{k\in [\cK(i),\cK(i+1)-1]}\boldb_k
+\osc(\bar{f}).
\end{align}
Applying Lemma~\ref{lem:x_A2C_b}, and using a union bound gives that with probability at least $1-\sum_{i=0}^{\chi(N)}\delta_i$, we have  $\boldb_k\le h_{\zeta(k)}(\delta_{\zeta(k)})$ for all $k=0,\ldots,N-1$.  Recall that $\zeta(k)$ was defined in Section~\ref{sub:preliminaries}. Using the bound $\boldb_k\le h_{\zeta(k)}(\delta_{\zeta(k)})$ on the left and right now gives the result.
\end{proofof}

\begin{proofof}{Corollary~\ref{cor:convergence_Robbin_Monro}}

% In the following we show the last line of \eqref{eq:ave_sum} is bounded.

Firstly, we know $s_{i+1} -s_i \ge \frac{1}{2}$ for all $i \in [0, \chi(N)-1]$. Then $\tau_N > s_{\chi(N)} = \sum_{i=0}^{\chi(N)-1} (s_{i+1} -s_i) \ge \frac{1}{2} \chi(N)$.
Furthermore, from the condition that $\sum_{k=0}^{\infty} \alpha_k^2 < \infty$, we have $\lim_{m \rightarrow \infty}\sum_{k = m}^\infty \alpha_k^2 = 0$. Therefore, $\lim_{i \rightarrow \infty} \sum_{ \{j \vert   s_{i} \le \tau_j < s_{i+1} \}} \alpha_j^2 = 0$. This implies that if we choose $ \epsilon >0$, there exists $i_1 \in \bbN$ such that for all $i \ge i_1$, $\sum_{\{j \vert s_i \le \tau_j \le s_{i+1}\}} \alpha_j^2 \le \epsilon^2$, so $\sqrt{\sum_{\{j \vert s_i \le \tau_j \le s_{i+1}\}} \alpha_j^2} \le \epsilon$. Since $\alpha_j \le \sqrt{\sum_{\{j \vert s_i \le \tau_j \le s_i\}} \alpha_j^2}$ for all $j$ such that $s_i \le \tau_j \le s_{i+1}$, then $\alpha_j \le \epsilon$ for all $j$ such that $s_i \le \tau_j \le s_{i+1}$ and $i \ge i_1$. Therefore, $\max_{\{j \vert s_i \le \tau_j \le s_{i+1}\}} \sqrt{\alpha_j} \le \sqrt{\epsilon}$ for $ i \ge i_1$. 

Without loss of generality, we can ignore the constant factors, since the right of \eqref{eq:ave_sum} is arbitrarily small, if in only if the following quantity is arbitrarily small: 
\begin{align*}
  & \frac{\sum_{i=0}^{\chi(N)} \left\{ \sqrt{\sum_{\{j \vert s_i \le \tau_j \le s_{i+1}\}} \alpha_j^2 }+ \max_{\{j \vert s_i \le \tau_j \le s_{i+1}\}} \sqrt{\alpha_j} \right\}}{s_{\chi(N)}} \\
   \le & \frac{\sum_{i=0}^{i_1} \left\{ \sqrt{\sum_{\{j \vert s_i \le \tau_j \le s_{i+1}\}} \alpha_j^2 }+ \max_{\{j \vert s_i \le \tau_j \le s_{i+1}\}} \sqrt{\alpha_j} \right\}}{\frac{1}{2} \chi(N)} + \frac{(\epsilon+ \sqrt{\epsilon} )(\chi(N) - i_1)}{\frac{1}{2} (\chi(N) -i_1)}.
\end{align*} 
The first term converges to zero as $\chi(N) \rightarrow \infty$ (i.e. $N \rightarrow \infty$) and the second term is $2(\epsilon + \sqrt{\epsilon})$, which is arbitrarily small. Therefore, we obtain asymptotic convergence for the expected value.

Now consider the case that Assumption~\ref{assumption:sub-Gaussian} holds. Equation~\ref{eq:goldstein_b_bound} implies that:
\begin{equation*}
\frac{1}{\tau_N}
  \sum_{k=0}^{N-1} \alpha_k  
  \dist(-\nabla \bar{f}(\bx_k),\overline{\partial}_{\boldb_k}\cI_{\cX}(\bx_k) )^2 \le
  \frac{(u+2u\ell)}{\tau_N}
\sum_{i=0}^{\chi(N)} \max_{k\in [\cK(i),\cK(i+1)-1]}\boldb_k
  +\frac{\osc(\bar{f})}{\tau_N}.
\end{equation*}
Using again that $\tau_N\ge \frac{1}{2}\chi(N)$, it suffices to show that 
$$
\lim_{N\to\infty}\frac{\sum_{i=0}^{\chi(N)} \max_{k\in [\cK(i),\cK(i+1)-1]}\boldb_k}{\chi(N)}
$$
Propostion~\ref{prop:convergence} implies that there is an integer $i_1$ such that if $i\ge i_1$, then $\max_{k\in [\cK(i),\cK(i+1)-1]}\boldb_k\le h_i(\delta_i)$, where $\delta_i=\frac{\sum_{j=\cK(i)}^{\cK(i+1)-1}\alpha_j^2}{\sum_{k=0}^{\infty}\alpha_k^2}$. 

Furthermore, Proposition~\ref{prop:convergence} implies that $h_i(\delta_i)\to 0$. In particular, given any $\epsilon>0$, there is  number $i_2\ge i_1$ such that if $i\ge i_2$, then $h_i(\delta_i)\le \epsilon$. In particular, for all $i\ge i_2$, we have $\max_{k\in[\cK(i),\cK(i+1)-1]}\boldb_k\le \epsilon. $ So, similar to the expected value case, we have:
$$
\frac{\sum_{i=0}^{\chi(N)} \max_{k\in [\cK(i),\cK(i+1)-1]}\boldb_k}{\chi(N)}\le 
\frac{\sum_{i=0}^{i_2} \max_{k\in [\cK(i),\cK(i+1)-1]}\boldb_k}{\chi(N)}+\epsilon \frac{\chi(N)-i_2}{\chi(N)}.
$$
The first term on the right converges to $0$, while the second is arbitrarily small.

Additionally, Proposition~\ref{prop:convergence} implies that $\boldb_k\to 0$ with probability $1$.  
% \eqref{eq:convergence_asymptotic}.
\end{proofof}

\begin{proofof}{Corollary~\ref{cor:convergence_constant_step}}
  \label{proof:corollary_connstant_step}

  For a constant step size, the construction in Section \ref{sub:preliminaries} reduces to: $s_{i+1} - s_i = \alpha \floor{\frac{1}{\alpha}} \le 1$. Thus, we have $\chi(N) +  1= \ceil{\frac{N}{\floor{1/\alpha}}} < \frac{N}{\floor{1/\alpha}} +1  $. If $\alpha \le \frac{1}{2}$, $\alpha \floor{\frac{1}{\alpha}} > \alpha (\frac{1}{ \alpha}-1) > \frac{1}{2}$, so $\floor{\frac{1}{\alpha}} > \frac{1}{ 2 \alpha}$ and $\chi(N) < 2 \alpha N $. Therefore, the general bound in \eqref{eq:ave_sum} can be simplified as 
  \begin{align*}
    &\frac{1}{\tau_N} \sum_{i=0}^{\chi(N)} \bbE\left[ \int_{s_i}^{s_{i+1}} \left\|\Pi_{T_{\cX}(\bx_t^{C_i})}\left(- \nabla \bar{f}(\bx_t^A)\right) \right\|^2 dt \right] \\
    & \le \frac{1}{\alpha N} \left( \left(\frac{N}{\floor{1/\alpha}} +1\right) (u + 2 u \ell) \left(g_1(1) \sqrt{ \floor{\frac{1}{\alpha}} \alpha^2 }+ g_2(1) \sqrt{\alpha} \right) + osc(\bar{f})\right) \\
    & < 2 (1+ 2 \ell)u  (g_1(1) + g_2(1)) \alpha^{\frac{1}{2}} + \frac{\left(osc(\bar{f}) + (1+ 2 \ell)u  (g_1(1) + g_2(1)) \right)}{\alpha N} 
  \end{align*}
  where functions $g_1$ and $g_2$ were defined in the proof of Theorem~\ref{thm:convergence_general} and the last inequality holds because $\alpha^{-1/2} < \alpha^{-1}$ for any $0 < \alpha <1$.

  To derive the bound on $\boldb_k$, note that $\cK(i+1)-\cK(i)=\floor*{\frac{1}{\alpha}}\le \frac{1}{\alpha}$. Similar to above, we have $\sum_{j=\cK(i)}^{\cK(i+1)-1}\alpha_j^2 \le \alpha$. The bound then follows from Lemma~\ref{lem:x_A2C_b}.

  For the high probability bound, we apply the Theorem~\ref{thm:convergence_general} with $\delta_i = \frac{\delta}{\chi(N)+1}$.
  Then, we have with probability at least $\delta$
  $$
  \frac{1}{N}\sum_{k=0}^{N-1} \dist\left(-\nabla \bar{f}(\bx_k),\overline{\partial}_{h_{\zeta(k)}\left(\frac{\delta}{\chi(N)+1}\right)} \cI_{\cX}(\bx_k) \right)^2 \le 
  \frac{u+2u\ell}{\alpha N}\sum_{i=0}^{\chi(N)}h_i\left(\frac{\delta}{\chi(N)+1}\right) + \frac{Du}{\alpha N}. 
  $$

  Similar to the bound on $\boldb_k$ above, we use that $\sum_{j=\cK(i)}^{\cK(i+1)-1}\alpha_j^2 \le \alpha$.
    So, we can bound:
  \begin{align*}
    h_i(\delta_i)&\le 
    \left( c_{\ref{const_sum_sq_high_prob}} \sqrt{\alpha \log(2\delta_i^{-1})}  + \left( c_{\ref{const_max_high_prob_1}} \log(2\delta_i^{-1}) + c_{\ref{const_max_high_prob_2}}\right)  \alpha \right)^{1/2}
    +c_{\ref{const_general_2}} \sqrt{ \alpha} \\
                 &\le  \underbrace{\left(\left( c_{\ref{const_sum_sq_high_prob}} \sqrt{ \log(2\delta_i^{-1})}  + \left( c_{\ref{const_max_high_prob_1}} \log(2\delta_i^{-1}) + c_{\ref{const_max_high_prob_2}}\right)   \right)^{1/2}
                 +c_{\ref{const_general_2}}\right)}_{q(\delta_i)/(u+2 u\ell)} \alpha^{1/4}.
  \end{align*}
  Using again that $\chi(N)+1\le 2\alpha N+1$ gives:
  \begin{align*}
    \MoveEqLeft
\frac{1}{N}\sum_{k=0}^{N-1} \dist\left(-\nabla \bar{f}(\bx_k),\overline{\partial}_{q\left(\frac{\delta}{2\alpha N+1}\right)\alpha^{1/4}} \cI_{\cX}(\bx_k) \right)^2
\\
&\le 
    \frac{1}{N}\sum_{k=0}^{N-1} \dist\left(-\nabla \bar{f}(\bx_k),\overline{\partial}_{q\left(\frac{\delta}{\chi(N)+1}\right)\alpha^{1/4}} \cI_{\cX}(\bx_k) \right)^2\\
  &\le  2 q\left(\frac{\delta}{\chi(N)+1}\right)\alpha^{1/4}+ \frac{q\left(\frac{\delta}{\chi(N)+1}\right)+Du}{\alpha N} \\
  &\le   2 q\left(\frac{\delta}{2\alpha N+1}\right)\alpha^{1/4}+ \frac{q\left(\frac{\delta}{2\alpha N+1}\right)+Du}{\alpha N} .
               \end{align*}
\end{proofof}

\section{Supporting Lemmas} \label{app:supporting_lemmas}

This sections collects supporting lemmas which bound a series of intermediate processes.

The following lemma is directly used to prove Theorem~\ref{thm:convergence_general}.

\begin{lemma} \label{lem:x_A2C}
  Assume $\bx_{\tau_{k_0}}^A = \bx_{\tau_{k_0}}^C \in \cX$ and $\alpha_k <\frac{1}{2}$ for all $k \in \bbN$, $k \ge k_0$ and $t \in [\tau_k, \tau_{k+1})$, the following bounds hold 
  % for all integers $N \ge 0$ and a fixed time $T= \tau_N$:
  \begin{enumerate}[label=(\roman*)]
  \item If $\bz_k$ satisfies Assumption \ref{assumption:bounded_var}, then
  \begin{align*}
   &\bbE\left[ \left\|\bx_{t}^A - \bx_{t}^C \right\| \right] \le \sigma e^{\ell(\tau_k-\tau_{k_0} )}  \sqrt{\sum_{j={k_0}}^{k-1} \alpha_j^2}  \\
   & \hspace{100pt}+ e^{\ell (\tau_{k}-\tau_{k_0})} \left( u +  \sqrt{2 r^{-1}  u \left((\tau_{k}-\tau_{k_0}) D u  + D^2 \right)   }  \right) \max_{j \in [k_0,k]} \sqrt{ \alpha_j}. 
  \end{align*}
  \item If $\bz_k$ satisfies Assumption \ref{assumption:L-mixing}, then
  \begin{align*}
    &\bbE\left[\left\|\bx_{t}^A - \bx_{t}^C\right\|\right] \le 2 \ell \Psi_2 (\bz) e^{\ell(\tau_k-\tau_{k_0})}   \sqrt{\sum_{j=k_0}^{k-1} \alpha_j^2}  \\
    & \hspace{100pt} +e^{\ell (\tau_{k}-\tau_{k_0})} \left( u +  \sqrt{2 r^{-1}  u \left((\tau_{k}-\tau_{k_0}) D u  + D^2 \right)   }  \right) \max_{j \in [k_0,k]} \sqrt{ \alpha_j}. 
  \end{align*}
  \item If $\bz_k$ satisfies Assumption \ref{assumption:sub-Gaussian} and $\epsilon >0$, then with probability at least $(1-  e^{-\epsilon})^{2}$, 
  \begin{align*}
    &\sup_{s \in [\tau_{k_0},\tau_k)}\left\|\bx_{s}^A - \bx_{s}^C \right\| \le  \\
    &e^{\ell(\tau_k - \tau_{k_0})} \left( 2 \sqrt{2} \hat{\sigma} D \sqrt{\epsilon}\sqrt{ \sum_{j=k_0}^{k-1} \alpha_j^2}  + \left( 4 \hat{\sigma}^2  \epsilon + \hat{\sigma}^2  + n \hat{\sigma}^2\right)  \sum_{j=k_0}^{k-1} \alpha_j^2 \right)^{1/2}\\
    & \hspace{100pt} +e^{\ell (\tau_{k}-\tau_{k_0})} \left( u +  \sqrt{2 r^{-1}  u \left((\tau_{k}-\tau_{k_0}) D u  + D^2 \right)   }  \right) \max_{j \in [k_0,k]} \sqrt{ \alpha_j}. 
  \end{align*}
  \end{enumerate}
  % \textcolor{red}{Left: $\sup_{t\in [\tau_{k_0},\tau_{k+1})}$, this change does not change the result though.}
\end{lemma}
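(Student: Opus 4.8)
The plan is to control $\|\bx_t^A-\bx_t^C\|$ through the four-term triangle inequality set up in Appendix~\ref{sec:convergence_intermediate},
\[
\|\bx_t^A-\bx_t^C\|\le\|\bx_{\tau_k}^A-\bx_{\tau_k}^M\|+\|\bx_{\tau_k}^M-\bx_{\tau_k}^D\|+\|\bx_{\tau_k}^D-\bx_{\tau_k}^C\|+\|\bx_t^C-\bx_{\tau_k}^C\|,
\]
bounding each intermediate difference separately and then recombining. The three cases differ only in the treatment of the stochastic term $\|\bx_{\tau_k}^A-\bx_{\tau_k}^M\|$, so the deterministic pieces can be handled once. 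I expect the $\sqrt{\sum_j\alpha_j^2}$ contribution to come entirely from this noise term, while the $\max_j\sqrt{\alpha_j}$ contribution, together with the $e^{\ell(\tau_k-\tau_{k_0})}$ amplification and the $r^{-1}$-dependent constant, comes from the discretization and reflection terms.

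First I would dispose of the two geometric terms. The velocity field of \eqref{eq:x_C} satisfies $\|\Pi_{T_{\cX}(\cdot)}(-\nabla\bar f(\cdot))\|\le\|\nabla\bar f\|\le u$, so $\|\bx_t^C-\bx_{\tau_k}^C\|\le u(t-\tau_k)\le u\alpha_k$. For $\|\bx_{\tau_k}^D-\bx_{\tau_k}^C\|$ I would use the Skorokhod stability estimate (Lemma~2.2(i) of \citep{tanaka1979stochastic}): since $\bx^D=\cS(\cD(\by^C))$ and $\bx^C=\cS(\by^C)$, the gap is governed by $\|\cD(\by^C)-\by^C\|$, which is $O(\alpha_k)$ by the same velocity bound, and by the total variation of the reflection measures. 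Bounding that total variation is where the inscribed-ball radius enters: the interior-ball condition yields a push of order $\sqrt{2r^{-1}u((\tau_k-\tau_{k_0})Du+D^2)}$, producing the $\max_j\sqrt{\alpha_j}$ term with exactly the stated constant. The remaining deterministic term $\|\bx_{\tau_k}^M-\bx_{\tau_k}^D\|$ I would close with a discrete Gr\"onwall argument against the $\ell$-Lipschitz drift, which is what manufactures the $e^{\ell(\tau_k-\tau_{k_0})}$ factor.

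The core is the noise term. With $\Delta_k=\bx_{\tau_k}^A-\bx_{\tau_k}^M$ and $\xi_k=\nabla f(\bx_{\tau_k}^A,\bz_k)-\nabla\bar f(\bx_{\tau_k}^A)$, nonexpansiveness of $\Pi_{\cX}$ and adding and subtracting $\nabla\bar f(\bx_{\tau_k}^A)$ give the recursion $\|\Delta_{k+1}\|\le(1+\alpha_k\ell)\|\Delta_k\|+\alpha_k\|\xi_k\|$. A direct unrolling only yields a $\sum_j\alpha_j$ bound, so the point is to exploit cancellation in the conditionally zero-mean $\xi_k$. Under Assumption~\ref{assumption:bounded_var} I would track the second moment: expanding $\|\Delta_{k+1}\|^2$, the martingale cross-term has zero conditional mean, leaving $\bbE[\|\Delta_{k+1}\|^2\mid\cF_k]\le(1+\alpha_k\ell)^2\|\Delta_k\|^2+\alpha_k^2\sigma^2$, and unrolling followed by Jensen's inequality produces $\sigma\,e^{\ell(\tau_k-\tau_{k_0})}\sqrt{\sum_j\alpha_j^2}$. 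Under Assumption~\ref{assumption:L-mixing} the martingale cancellation is unavailable, so I would instead bound the accumulated weighted noise directly with the $L$-mixing moment machinery of \citep{lamperski2021projected,zheng2022constrained}, using the $\ell$-Lipschitz dependence on $z$ to pass from $\Psi_2(\bz)$ to the noise process and obtain the $2\ell\Psi_2(\bz)$ constant.

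The sub-Gaussian case, which I expect to be the main obstacle, requires a pathwise high-probability bound rather than a moment bound. Here $\xi_k=\bz_k$, and the plan is again to expand $\|\Delta_{k+1}\|^2$, but now to concentrate the two random pieces separately: the martingale cross-term $\sum_k\alpha_k\Delta_k^\top\bz_k$, whose increments are conditionally sub-Gaussian with parameter at most $\hat\sigma\|\Delta_k\|\le\hat\sigma D$, yields via \eqref{eq:sub-Gaussian} and a Doob-type maximal inequality the deviation $2\sqrt2\hat\sigma D\sqrt\epsilon\,(\sum_j\alpha_j^2)^{1/2}$, while the quadratic term $\sum_k\alpha_k^2\|\bz_k\|^2$ concentrates as a sub-exponential sum to give the $(4\hat\sigma^2\epsilon+\hat\sigma^2+n\hat\sigma^2)\sum_j\alpha_j^2$ correction, the $n\hat\sigma^2$ coming from $\bbE\|\bz\|^2\le n\hat\sigma^2$. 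Intersecting these two concentration events accounts for the $(1-e^{-\epsilon})^2$ confidence. The delicate points are keeping the concentration uniform over $s\in[\tau_{k_0},\tau_k)$ (the maximal inequality, since $\bx^A$ is piecewise constant so the supremum reduces to a maximum over the break indices) and applying the Gr\"onwall amplification $e^{\ell(\tau_k-\tau_{k_0})}$ outside the concentration step so that the stated constants are not inflated. Taking expectations in (i)--(ii), or intersecting the high-probability events in (iii), and adding the four bounds then gives the result.
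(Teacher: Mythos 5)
Your proposal matches the paper's architecture almost exactly: the same four-term decomposition through $\bx^M$, $\bx^D$, and the time-regularity of $\bx^C$, with Lemma~\ref{lem:x_C2C} (velocity bound $u$), Lemma~\ref{lem:x_C2D} (Tanaka's Skorokhod stability estimate plus the gauge/support-function argument, which is exactly where $r^{-1}$ enters), and Lemma~\ref{lem:x_M2D} (discrete Gr\"onwall producing $e^{\ell(\tau_k-\tau_{k_0})}$) playing precisely the roles you assign them; and in part (iii) the same two concentration events --- a Doob maximal inequality for a supermartingale built from the cross term, and a sub-exponential bound for $\sum_i \alpha_i^2\|\bz_i\|^2$ with the $n\hat\sigma^2$ term from the norm's moment generating function --- intersected to give the $(1-e^{-\epsilon})^2$ confidence. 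The one genuine difference is your handling of the stochastic term: you run the squared recursion directly on $\Delta_k=\bx_{\tau_k}^A-\bx_{\tau_k}^M$, whereas the paper inserts the auxiliary process $\bx^B$ of \eqref{eq:x_B}, driven by noise evaluated along the \emph{mean} path, so that the $\bx^B$-versus-$\bx^M$ recursion is drift-free (pure martingale plus quadratic term, with $\|\bv_i\|\le D$ exactly) and the Gr\"onwall factor enters only through the deterministic comparison $\|\bx^A-\bx^B\|\le (e^{\ell(\tau_k-\tau_{k_0})}-1)\max_i\|\bx_{\tau_i}^B-\bx_{\tau_i}^M\|$. Under Assumption~\ref{assumption:bounded_var} your direct route is fine and arguably cleaner: $\bz_k$ is independent of $\sigma(\bx_0,\bz_0,\ldots,\bz_{k-1})$, so the cross term is conditionally centered, and unrolling plus Jensen reproduces $\sigma e^{\ell(\tau_k-\tau_{k_0})}\sqrt{\sum_j\alpha_j^2}$.

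Two places need more than you wrote. First, in part (iii), your phrase ``apply the Gr\"onwall amplification outside the concentration step'' cannot be taken literally in the direct route: unrolling $\|\Delta_{k}\|^2$ places horizon-dependent deterministic weights $w_j^{(k)}=\prod_{i=j+1}^{k-1}(1+\alpha_i\ell)^2$ inside a \emph{signed} sum, so they cannot simply be factored out, and since they change with the endpoint $k$ the partial sums are not a single supermartingale and Doob's inequality does not apply as stated. The fix is either the paper's $\bx^B$ construction (which makes the separation literal) or a normalization of $\|\Delta_s\|^2$ by $\prod_{i<s}(1+\alpha_i\ell)^2$, yielding horizon-independent weights at most $1$, after which Doob runs and $e^{2\ell(\tau_k-\tau_{k_0})}$ is reinstated multiplicatively; note also that your conditional sub-Gaussian parameter is $\hat\sigma\|a_k\|$ with $a_k=\Delta_k-\alpha_k\bigl(\nabla\bar f(\bx_{\tau_k}^A)-\nabla\bar f(\bx_{\tau_k}^M)\bigr)$, so the clean bound $\hat\sigma D$ picks up a $(1+\alpha_k\ell)$ factor unless you route through $\bx^B$. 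Second, part (ii) in your proposal is only a pointer: since martingale cancellation fails for mixing data, the paper's Lemma~\ref{lem:x_A2M_L_mixing} constructs a family of lag-$s$ processes $\bx^{M,s},\bx^{B,s}$ conditioned on $\cF_{k-s}\vee\cG$, telescopes $\|\bx^A-\bx^M\|\le\sum_s\|\bx^{M,s}-\bx^{M,s+1}\|$, restores zero-mean cross terms per lag via the tower property, extracts the per-lag variance $\psi_2(s,\bz)^2$ from the $\ell$-Lipschitz dependence on $z$, and sums over lags to assemble $2\ell\Psi_2(\bz)$. That lag-telescoping construction is the actual content of the $L$-mixing case, and deferring to the cited machinery leaves the hardest step of the lemma unproved.
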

%
% The proof of Lemma~\ref{lem:x_A2C} is shown in Appendix~\ref{app:supporting_lemmas}.
% We can see this bound of $\|\bx_{T}^A - \bx_{T}^C\|$ is of $O(\sum_{j=0}^{N-1} \alpha_j^2 + \max_{s \in [0,N]} \sqrt{\alpha_s})$, which is desired. 

Before proving Lemma~\ref{lem:x_A2C}, we will show how it can be used to prove Lemma~\ref{lem:x_A2C_b} and Proposition~\ref{prop:convergence} from the main text.

\begin{proofof}{Lemma~\ref{lem:x_A2C_b}}

  Recall that Assumption~\ref{assumption:sub-Gaussian} implies Assumption~\ref{assumption:bounded_var} with $\sigma = \sqrt{n}\hat\sigma$. Let $k_0 = \cK(i)$ and assume that $k\le \cK(i+1)-1$. In this case, $\tau_{k}-\tau_{k_0}\le s_{i+1}-s_i\le 1$. So, we can plug the upper bound of $1$ into all of the $\tau_{k}-\tau_{k_0}$ terms in Lemma~\ref{lem:x_A2C}. Furthermore, $k\le \cK(i+1)-1$ implies that 
  $$
  \sum_{j=k_0}^{k-1}\alpha_j^2 \le \sum_{j=\cK(i)}^{\cK(i+1)-1}\alpha_j^2 \quad \textrm{and}
  \quad \max_{j\in[k_0,k]}\sqrt{\alpha_j}\le \max_{j\in [\cK(i),\cK(i+1))}\sqrt{\alpha_j}.
  $$
  Plugging these bounds into Lemma~\ref{lem:x_A2C} gives the bounds in expectation. 

  To get the bounds in high probability, we do the substitutions above. Furtheremore, note that $(1-e^{-\epsilon})^2\ge 1-2e^{-\epsilon}$. Set $\delta = 2e^{-\epsilon}$, which gives $\epsilon = \log(2\delta^{-1})$. Substituting this value for $\epsilon$ gives result.
\end{proofof}

\begin{proofof}{Proposition~\ref{prop:convergence}}

  By Lemma~\ref{lem:x_A2C_b}, the event $\max_{k\in [\cK(i),\cK(i+1))}\boldb_k > h_i(\delta_i)$ occurs with probability at most $\delta_i$. By construction, 
  $$
  \sum_{i=0}^{\infty}\delta_i = 1,
  $$
So, the Borel-Cantelli Lemma implies that $\max_{k\in [\cK(i),\cK(i+1))}\boldb_k > h_i(\delta_i)$ can occur at most finitely many times. 

To complete the proof, it suffices to show that when $i \rightarrow \infty$, $h_i(\delta_i)\to 0$. Note that $\alpha_k\to 0$ and $\cK(i)\to \infty$. Thus, 
$$
\lim_{i\to\infty}\max_{k\in[\cK(i),\cK(i+1))}\sqrt{\alpha_k} = 0. 
$$
Similarly, $\sum_{k=0}^{\infty}\alpha_k^2<\infty$ and $\cK(i) \rightarrow \infty$ implies that
$$
\lim_{i\to\infty}\sum_{j=\cK(i)}^{\cK(i+1)-1}\alpha_j^2 = 0.
$$

Thus, to show that $h_i(\delta_i)=0$, using $\delta_i = \frac{\sum_{j=\cK(i)}^{\cK(i+1)-1}\alpha_j^2}{\sum_{k=0}^{\infty}\alpha_k^2}$, it suffices to show that
$$
\log\left(\frac{1}{\sum_{j=\cK(i)}^{\cK(i+1)-1}\alpha_j^2} \right) \sum_{j=\cK(i)}^{\cK(i+1)-1}\alpha_j^2 \to 0.
$$
This is now a special case of $\lim_{t\downarrow 0}t\log(t^{-1})=0.$ 
\end{proofof}

The following lemmas support the proof of Lemma~\ref{lem:x_A2C}. 
 
\begin{lemma} \label{lem:x_A2M_bounded_var}
  Assume $\bx_{\tau_{k_0}}^A = \bx_{\tau_{k_0}}^M \in \cX$ and $\bz_k$ satisfies assumption \ref{assumption:bounded_var}, for all $k \in \bbN$, $k \ge k_0$, the following bound holds:
  $$
  \bbE\left[\|\bx_{\tau_k}^A - \bx_{\tau_k}^M\|\right] \le \sigma e^{\ell( \tau_k -\tau_{k_0})} \sqrt{\sum_{s=k_0}^{k-1} \alpha_s^2}. 
  $$
\end{lemma}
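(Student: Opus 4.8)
The plan is to control the second moment of the error $\mathbf{e}_k := \bx_{\tau_k}^A - \bx_{\tau_k}^M$ via a one-step recursion and then pass to the first moment by Jensen's inequality. Recall that $\bx_{\tau_k}^A = \bx_k$ and that $\mathbf{e}_{k_0} = 0$ by hypothesis. Since $\bx_{\tau_{k+1}}^A$ and $\bx_{\tau_{k+1}}^M$ are both obtained by applying $\Pi_{\cX}$, the non-expansiveness of the convex projection gives $\|\mathbf{e}_{k+1}\| \le \|\mathbf{e}_k - \alpha_k(\nabla f(\bx_k,\bz_k) - \nabla \bar{f}(\bx_{\tau_k}^M))\|$. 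First I would split the stochastic gradient into a conditional-mean part and a noise part by writing $\nabla f(\bx_k,\bz_k) - \nabla \bar{f}(\bx_{\tau_k}^M) = \big(\nabla \bar{f}(\bx_k) - \nabla \bar{f}(\bx_{\tau_k}^M)\big) + \boldsymbol{\xi}_k$, where $\boldsymbol{\xi}_k := \nabla f(\bx_k,\bz_k) - \nabla \bar{f}(\bx_k)$.

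The key structural fact is that $\boldsymbol{\xi}_k$ is a martingale difference. Letting $\cF_k$ be the $\sigma$-algebra generated by $\bx_0$ and $\bz_0,\ldots,\bz_{k-1}$, the iterate $\bx_k$ is $\cF_k$-measurable while $\bz_k$ is independent of $\cF_k$, so $\bbE[\boldsymbol{\xi}_k \mid \cF_k] = \bbE[\nabla f(x,\bz_k)]\big|_{x=\bx_k} - \nabla \bar{f}(\bx_k) = 0$ and $\bbE[\|\boldsymbol{\xi}_k\|^2\mid \cF_k]\le \sigma^2$ by Assumption~\ref{assumption:bounded_var}. Squaring the displayed bound and taking $\bbE[\cdot\mid\cF_k]$, the cross term vanishes because the remaining summand is $\cF_k$-measurable and hence uncorrelated with $\boldsymbol{\xi}_k$, leaving
\begin{align*}
\bbE\left[\|\mathbf{e}_{k+1}\|^2 \mid \cF_k\right] \le \left\|\mathbf{e}_k - \alpha_k\big(\nabla \bar{f}(\bx_k) - \nabla \bar{f}(\bx_{\tau_k}^M)\big)\right\|^2 + \alpha_k^2 \sigma^2.
\end{align*}
Using the $\ell$-Lipschitzness of $\nabla \bar f$ together with $\bx_k - \bx_{\tau_k}^M = \mathbf{e}_k$ bounds the first term by $(1+\alpha_k\ell)^2\|\mathbf{e}_k\|^2$, so writing $a_k := \bbE[\|\mathbf{e}_k\|^2]$ and taking total expectation yields the scalar recursion $a_{k+1} \le (1+\alpha_k\ell)^2 a_k + \alpha_k^2\sigma^2$ with $a_{k_0}=0$.

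Finally I would unroll this recursion to obtain $a_k \le \sigma^2 \sum_{s=k_0}^{k-1}\alpha_s^2 \prod_{j=s+1}^{k-1}(1+\alpha_j\ell)^2$, bound each factor via $1+\alpha_j\ell \le e^{\alpha_j\ell}$ so that $\prod_{j=s+1}^{k-1}(1+\alpha_j\ell)^2 \le e^{2\ell(\tau_k-\tau_{s+1})} \le e^{2\ell(\tau_k-\tau_{k_0})}$ (using $\sum_{j=s+1}^{k-1}\alpha_j \le \tau_k - \tau_{k_0}$), and then apply Jensen's inequality $\bbE[\|\mathbf{e}_k\|]\le\sqrt{a_k}$ to reach the claimed bound. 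I expect the main obstacle to be the careful justification that the noise is conditionally mean zero with conditionally bounded second moment: this is exactly where the independence of $\bz_k$ from the past and the identity $\nabla\bar f(x)=\bbE[\nabla f(x,\bz_k)]$ are essential, and it is this martingale structure that annihilates the cross term and so keeps the noise contribution at order $\alpha_k^2$ rather than $\alpha_k$ in the recursion.
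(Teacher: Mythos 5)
Your proof is correct and reaches the lemma's bound with exactly the stated constant, but it takes a genuinely different route from the paper. The paper does not run a recursion on $\|\bx_{\tau_k}^A - \bx_{\tau_k}^M\|$ directly; instead it introduces an intermediate process $\bx_{\tau_{k+1}}^B = \Pi_{\cX}\bigl( \bx_{\tau_k}^B - \alpha_k \nabla f(\bx_{\tau_k}^M, \bz_k)\bigr)$, in which the stochastic gradient is evaluated at the \emph{mean-process} iterate, and splits via the triangle inequality into $\|\bx^A - \bx^B\| + \|\bx^B - \bx^M\|$. The piece $\|\bx^B - \bx^M\|$ is pure noise accumulation with no Lipschitz growth (both processes share the drift at $\bx_{\tau_k}^M$), so its second moment telescopes to $\sigma^2\sum_j \alpha_j^2$; the piece $\|\bx^A - \bx^B\|$ is then handled by a first-moment Gr\"onwall-type recursion, producing the factor $e^{\ell(\tau_k - \tau_{k_0})}-1$, and the two add up to $\sigma e^{\ell(\tau_k-\tau_{k_0})}\sqrt{\sum_s \alpha_s^2}$. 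Your approach collapses this into a single conditional-$L^2$ recursion $a_{k+1} \le (1+\alpha_k\ell)^2 a_k + \alpha_k^2\sigma^2$, using the martingale-difference property of $\boldsymbol{\xi}_k$ to annihilate the cross term — this is shorter and arguably cleaner, and your conditioning argument is actually more careful than the paper's, which informally factors the unconditional expectation of the cross term into a product of expectations when the correct justification is exactly the tower-property argument you give. What the paper's decomposition buys, however, is reusability: the same intermediate process $\bx^B$ (with the gradient frozen at $\bx^M_{\tau_k}$) is the backbone of the high-probability bound in Lemma~\ref{lem:x_A2M_high_P}, and the analogous frozen-point construction with shifted filtrations is what makes the $L$-mixing case (Lemma~\ref{lem:x_A2M_L_mixing}) tractable, where your argument would break down because $\bbE[\boldsymbol{\xi}_k \mid \cF_k] \neq 0$ for dependent data and the cross term no longer vanishes. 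So your proof is a valid, tighter-scoped alternative for Assumption~\ref{assumption:bounded_var} specifically, while the paper's architecture is chosen to serve all three noise models uniformly.
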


\begin{lemma} \label{lem:x_A2M_L_mixing}
  Assume $\bx_{\tau_{k_0}}^A = \bx_{\tau_{k_0}}^M \in \cX$ and $\bz_k$ satisfies assumption \ref{assumption:L-mixing}, for all $k \in \bbN$, $k \ge k_0$, the following bound holds:
  $$
  \bbE\left[\left\|\bx_{\tau_k}^A - \bx_{\tau_k}^M\right\|\right] \le   2 \ell \Psi_2 (\bz) e^{\ell( \tau_k -\tau_{k_0})} \sqrt{\sum_{s=k_0}^{k-1} \alpha_s^2}.
  $$
\end{lemma}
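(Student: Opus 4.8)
The plan is to bound the squared deviation $\bbE\left[\left\|\bx_{\tau_k}^A-\bx_{\tau_k}^M\right\|^2\right]$ and then pass to the first moment by Jensen's inequality, since the claimed bound is precisely the square root of $(2\ell\Psi_2(\bz))^2e^{2\ell(\tau_k-\tau_{k_0})}\sum_{s=k_0}^{k-1}\alpha_s^2$. Write $\mathbf{d}_k=\bx_{\tau_k}^A-\bx_{\tau_k}^M$ and let $\mathbf{e}_k=\nabla f(\bx_{\tau_k}^A,\bz_k)-\nabla\bar{f}(\bx_{\tau_k}^A)$ denote the gradient noise. Since both updates project a gradient step and $\Pi_{\cX}$ is non-expansive, I would first obtain
\[
\left\|\mathbf{d}_{k+1}\right\|\le\left\|\mathbf{d}_k-\alpha_k\left(\nabla\bar{f}(\bx_{\tau_k}^A)-\nabla\bar{f}(\bx_{\tau_k}^M)\right)-\alpha_k\mathbf{e}_k\right\|.
\]
Setting $\mathbf{q}_k=\mathbf{d}_k-\alpha_k\left(\nabla\bar{f}(\bx_{\tau_k}^A)-\nabla\bar{f}(\bx_{\tau_k}^M)\right)$, the $\ell$-Lipschitzness of $\nabla\bar{f}$ gives $\left\|\mathbf{q}_k\right\|\le(1+\alpha_k\ell)\left\|\mathbf{d}_k\right\|$, and squaring produces the one-step inequality
\[
\left\|\mathbf{d}_{k+1}\right\|^2\le(1+\alpha_k\ell)^2\left\|\mathbf{d}_k\right\|^2-2\alpha_k\,\mathbf{q}_k^\top\mathbf{e}_k+\alpha_k^2\left\|\mathbf{e}_k\right\|^2.
\]

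Taking expectations, the term $\bbE\left[\left\|\mathbf{e}_k\right\|^2\right]$ is controlled by the Lipschitzness of $\nabla_x f$ in $z$: since $\nabla\bar{f}(x)=\bbE[\nabla f(x,\bz_k)]$, centering together with $\|\nabla f(x,z)-\nabla f(x,z')\|\le\ell\|z-z'\|$ bounds $\bbE\left[\left\|\mathbf{e}_k\right\|^2\right]^{1/2}$ by roughly $2\ell\,\psi_2(0,\bz)$, the factor $2$ arising from the centering. The genuine difficulty is the cross term $\bbE\left[\mathbf{q}_k^\top\mathbf{e}_k\right]$. Under the IID hypothesis of Assumption~\ref{assumption:bounded_var} (the content of Lemma~\ref{lem:x_A2M_bounded_var}), $\bz_k$ is independent of $\mathbf{q}_k$, so $\mathbf{e}_k$ is a martingale difference, this term vanishes, and the martingale-type $\sqrt{\sum_s\alpha_s^2}$ rate follows directly. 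Under $L$-mixing this is false: $\bz_k$ is correlated with the past iterates through the driving noise, so $\bbE\left[\mathbf{q}_k^\top\mathbf{e}_k\right]\ne0$, and taming this residual correlation is the main obstacle.

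To control the correlation I would exploit the defining $L$-mixing structure. The quantity $\mathbf{q}_k$ is measurable with respect to the past generated by $\bz_{k_0},\dots,\bz_{k-1}$ and the initialization. The $L$-mixing property bounds, in $L^2$, the error of predicting $\bz_k$ from the post-$(k-\tau)$ information $\cF^+_{k-\tau}$ by $\psi_2(\tau,\bz)$; substituting this prediction into $\mathbf{e}_k$ costs at most $\ell\,\psi_2(\tau,\bz)$ by the $z$-Lipschitzness of $\nabla_x f$, which progressively strips away the dependence of the noise on the far past that generated $\mathbf{q}_k$. Equivalently, rather than a clean per-step recursion, I would bound the second moment of the accumulated weighted noise by its diagonal part $\sum_s\alpha_s^2\bbE\left\|\mathbf{e}_s\right\|^2$ plus off-diagonal correlations $\bbE\left[\mathbf{e}_s^\top\mathbf{e}_{s'}\right]$ that decay in $|s-s'|$ at the rate set by $\psi_2$. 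The diagonal contributes $\psi_2(0,\bz)$ and the summed off-diagonal terms contribute $\sum_{\tau\ge1}\psi_2(\tau,\bz)$, so together they assemble the full $\Psi_2(\bz)=\sum_{\tau\ge0}\psi_2(\tau,\bz)$, finite by hypothesis; combined with the centering factor this yields the constant $2\ell\Psi_2(\bz)$. The nonlinearity of $\Pi_{\cX}$ is precisely what prevents exact unrolling and forces this correlation bookkeeping to be carried out through the squared recursion.

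Assembling the three estimates gives a Grönwall-type recursion $\bbE\left[\left\|\mathbf{d}_{k+1}\right\|^2\right]\le(1+\alpha_k\ell)^2\bbE\left[\left\|\mathbf{d}_k\right\|^2\right]+C\alpha_k^2$ with $C=(2\ell\Psi_2(\bz))^2$. Iterating from $\mathbf{d}_{k_0}=0$ and bounding the product $\prod_i(1+\alpha_i\ell)^2\le\exp\!\left(2\ell\sum_i\alpha_i\right)=e^{2\ell(\tau_k-\tau_{k_0})}$ yields $\bbE\left[\left\|\mathbf{d}_k\right\|^2\right]\le(2\ell\Psi_2(\bz))^2e^{2\ell(\tau_k-\tau_{k_0})}\sum_{s=k_0}^{k-1}\alpha_s^2$, and $\bbE\left\|\mathbf{d}_k\right\|\le\bbE\left[\left\|\mathbf{d}_k\right\|^2\right]^{1/2}$ delivers the statement. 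The crux is obtaining the martingale-scale $\sqrt{\sum_s\alpha_s^2}$ bound for the accumulated noise despite the temporal dependence of the $L$-mixing data; the exponential factor and the remaining constants are routine bookkeeping, and the same template (with the cross term simply dropped) recovers the IID case of Lemma~\ref{lem:x_A2M_bounded_var}.
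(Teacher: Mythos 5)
Your reduction to the squared recursion and the closing Gr\"onwall/Jensen steps mirror the paper's treatment of the independent case (Lemma~\ref{lem:x_A2M_bounded_var}), but the step on which this lemma actually turns --- controlling the cross term $\bbE[\mathbf{q}_k^\top\mathbf{e}_k]$ under $L$-mixing --- is left as an assertion, and the mechanism you sketch does not work as stated. You propose to bound ``the accumulated weighted noise by its diagonal part plus off-diagonal correlations $\bbE[\mathbf{e}_s^\top\mathbf{e}_{s'}]$,'' but that bookkeeping presupposes an unrolled representation of $\mathbf{d}_k$ as a weighted sum of noises, which is exactly what the projection forbids --- as you yourself observe. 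Inside the squared recursion the only available cross term is $\bbE[\mathbf{q}_k^\top\mathbf{e}_k]$, where $\mathbf{q}_k$ is a nonlinear functional of the entire past trajectory; it cannot be rewritten as a sum of noise--noise covariances, and even $\bbE[\mathbf{e}_s^\top\mathbf{e}_{s'}]$ is not directly bounded by $\psi_2(|s-s'|,\bz)$, since $\mathbf{e}_{s'}$ depends on $\bx_{\tau_{s'}}^A$, which is built from the intermediate noises. The quantitative danger is real: the generic estimate $|\bbE[\mathbf{q}_k^\top\mathbf{e}_k]|\le \bbE[\|\mathbf{q}_k\|]\cdot O(\ell\,\Psi_2(\bz))$ leaves a term of order $\alpha_k\,\bbE\|\mathbf{d}_k\|$ in the recursion, i.e.\ first order in $\alpha_k$, and unrolling $u_{k+1}\le(1+c\alpha_k)u_k+C\alpha_k\sqrt{u_k}+C'\alpha_k^2$ yields $\bbE\|\mathbf{d}_k\| = O\left(\sum_s\alpha_s\right)=O(\tau_k-\tau_{k_0})$ rather than the claimed martingale scale $\sqrt{\sum_s\alpha_s^2}$. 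So the proposal, as written, proves a strictly weaker statement.

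The paper resolves this with a construction you gesture at (``progressively strips away the dependence'') but never carry out: for each lag $s$ it defines auxiliary \emph{projected} processes
\begin{align*}
\bx^{M,s}_{\tau_{k+1}}&=\Pi_{\cX}\left(\bx^{M,s}_{\tau_k}-\alpha_k\,\bbE\left[\nabla f(\bx^{M,s}_{\tau_k},\bz_k)\mid\cF_{k-s}\vee\cG\right]\right),\\
\bx^{B,s}_{\tau_{k+1}}&=\Pi_{\cX}\left(\bx^{B,s}_{\tau_k}-\alpha_k\,\bbE\left[\nabla f(\bx^{M,s}_{\tau_k},\bz_k)\mid\cF_{k-s-1}\vee\cG\right]\right),
\end{align*}
with $\bx^{M,0}=\bx^A$ and $\bx^{M,s}=\bx^M$ for $s>k$, and telescopes $\bx^A_{\tau_k}-\bx^M_{\tau_k}=\sum_{s=0}^{k}\left(\bx^{M,s}_{\tau_k}-\bx^{M,s+1}_{\tau_k}\right)$. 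For fixed $s$ the driving increments $\bbE[\nabla f\mid\cF_{k-s}\vee\cG]-\bbE[\nabla f\mid\cF_{k-s-1}\vee\cG]$ are genuine martingale differences relative to iterates that are $\cF_{k-s-1}\vee\cG$-measurable, so the cross term vanishes \emph{exactly} by the tower property, and their magnitude is at most $2\ell\,\psi_2(s,\bz)$ by the $z$-Lipschitzness of $\nabla f$ together with the $L$-mixing control of $\bz_k-\bbE[\bz_k\mid\cF^+_{k-s}]$. Each lag thus contributes $O\bigl(\psi_2(s,\bz)\,e^{\ell(\tau_k-\tau_{k_0})}\sqrt{\sum_j\alpha_j^2}\bigr)$, and summing over $s$ assembles $\Psi_2(\bz)$ while preserving the $\sqrt{\sum_j\alpha_j^2}$ scale; the projection is never unrolled, because every auxiliary process is itself a projected recursion to which non-expansiveness applies stepwise. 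To repair your argument you would need this (or an equivalent) per-lag martingale decomposition; the diagonal/off-diagonal covariance accounting cannot be executed through the squared recursion alone.
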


\begin{lemma} \label{lem:x_A2M_high_P}
  Assume $\bx_{\tau_{k_0}}^A = \bx_{\tau_{k_0}}^M \in \cX$, $\bz_k$ satisfies assumption \ref{assumption:sub-Gaussian}, $\alpha_k \le \frac{1}{2}$, for all $k \in \bbN$, $k \ge k_0$ and $\epsilon>0$, then with probability at least $(1 -  e^{-\epsilon})^2 $, the following bound holds:
  \begin{align*}
    \max_{s \in [k_0,k]}
    \left\|\bx_{\tau_s}^A - \bx_{\tau_s}^M \right\| \le e^{\ell(\tau_k - \tau_{k_0})} \left( 2 \sqrt{2} \hat{\sigma} D \sqrt{\epsilon}\sqrt{ \sum_{j=k_0}^{k-1} \alpha_j^2}  + \left( 4 \hat{\sigma}^2  \epsilon + \hat{\sigma}^2  + n \hat{\sigma}^2\right)  \sum_{j=k_0}^{k-1} \alpha_j^2 \right)^{1/2}.
  \end{align*}
\end{lemma}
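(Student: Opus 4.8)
The plan is to control the ``noise'' discrepancy $e_k := \bx_{\tau_k}^A - \bx_{\tau_k}^M$, which vanishes at $k=k_0$ and is driven entirely by the gradient-noise vectors $\bz_j$, since $\bx^M$ advances with the exact gradient $\nabla\bar f$. The structure parallels the expectation bounds of Lemmas~\ref{lem:x_A2M_bounded_var} and~\ref{lem:x_A2M_L_mixing}, but replaces the moment estimates by concentration. Writing one step of each recursion, subtracting, and using non-expansiveness of $\Pi_{\cX}$ gives $\|e_{k+1}\|\le \|e_k - \alpha_k d_k - \alpha_k\bz_k\|$, where $d_k = \nabla\bar f(\bx_{\tau_k}^A)-\nabla\bar f(\bx_{\tau_k}^M)$. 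Squaring and applying the $\ell$-Lipschitz bound $\|d_k\|\le\ell\|e_k\|$ together with Cauchy--Schwarz yields a recursion of the form $\|e_{k+1}\|^2 \le (1+\alpha_k\ell)^2\|e_k\|^2 + \alpha_k^2\|\bz_k\|^2 - 2\alpha_k(e_k-\alpha_k d_k)^\top\bz_k$. The last term is a martingale difference, because the coefficient $e_k-\alpha_k d_k$ is $\cF_{k-1}$-measurable while $\bz_k$ is zero-mean and independent of $\cF_{k-1}$; the middle term is nonnegative with $\bbE[\|\bz_k\|^2]\le n\hat\sigma^2$.

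Next I would unroll this linear recursion. Dividing by the deterministic product $\prod_{j=k_0}^{k-1}(1+\alpha_j\ell)^2$ converts it into a telescoping inequality, so $\|e_k\|^2$ is bounded by $\prod_{j=k_0}^{k-1}(1+\alpha_j\ell)^2$ (which is at most $e^{2\ell(\tau_k-\tau_{k_0})}$) times the sum $M_k + Q_k$, where $M_k := -2\sum_{j=k_0}^{k-1} w_j(e_j-\alpha_j d_j)^\top\bz_j$ is a martingale with fixed deterministic weights $w_j\in(0,1]$, and $Q_k := \sum_{j=k_0}^{k-1} w_j\alpha_j^2\|\bz_j\|^2\ge 0$. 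The observation that legitimizes pulling the exponential factor out in front of the sign-indefinite $M_k$ is that the telescoped quantity equals $M_s+Q_s$ and dominates $\|e_s\|^2\prod(\cdots)^{-1}\ge 0$, so $M_s+Q_s\ge 0$ for every $s$. Since moreover $Q_s$ is nondecreasing and the product factor increases in $s$, I obtain $\max_{s\in[k_0,k]}\|e_s\|^2 \le e^{2\ell(\tau_k-\tau_{k_0})}\big(\max_{s\le k}M_s + Q_k\big)$.

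It then remains to bound $\max_{s\le k}M_s$ and $Q_k$ with high probability. For $M_s$, conditioning on $\cF_{j-1}$ and using sub-Gaussianity, each increment is conditionally sub-Gaussian with variance proxy proportional to $\alpha_j^2\hat\sigma^2\|e_j-\alpha_j d_j\|^2$, bounded via $\|e_j\|\le D$ (both processes lie in $\cX$, of diameter $D$). Then $\exp(\lambda M_s - \tfrac12\lambda^2 V_s)$, with $V_s$ the accumulated variance proxy $\sim 4\hat\sigma^2 D^2\sum\alpha_j^2$, is a supermartingale, and Ville's maximal inequality gives $\max_{s\le k}M_s \le 2\sqrt2\,\hat\sigma D\sqrt{\epsilon}\sqrt{\sum_j\alpha_j^2}$ on an event of probability at least $1-e^{-\epsilon}$: this is the first term inside the square root. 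For $Q_k$, each $\|\bz_j\|^2$ is sub-exponential with mean at most $n\hat\sigma^2$, so a Chernoff/Bernstein bound on $\sum_j\alpha_j^2\|\bz_j\|^2$ yields $Q_k\le (4\hat\sigma^2\epsilon + \hat\sigma^2 + n\hat\sigma^2)\sum_j\alpha_j^2$ on a second event of probability at least $1-e^{-\epsilon}$: this is the second term. Intersecting the two events gives probability at least $(1-e^{-\epsilon})^2$; substituting into the displayed bound for $\max_s\|e_s\|^2$ and taking square roots (moving $e^{2\ell(\cdots)}$ outside as $e^{\ell(\cdots)}$) gives the claim.

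I expect the main obstacle to be the concentration steps rather than the algebra: obtaining the \emph{uniform-in-$s$} (maximal) sub-Gaussian martingale bound with the predictable, data-dependent coefficients $e_j-\alpha_j d_j$, and producing the sub-exponential tail for $\sum_j\alpha_j^2\|\bz_j\|^2$ with exactly the constants $4\hat\sigma^2\epsilon + (n+1)\hat\sigma^2$. Care is also required to bound the martingale coefficient cleanly by $D$ — the naive estimate gives $(1+\alpha_j\ell)\|e_j\|$, so the factor $(1+\alpha_j\ell)$ must either be sharpened away or the resulting $\alpha_j^2$ correction absorbed into the sub-exponential bucket $Q_k$ — and to justify pulling the Grönwall factor outside the sign-indefinite martingale sum via the nonnegativity of $M_s+Q_s$.
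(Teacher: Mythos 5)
Your proposal is correct in substance but takes a genuinely different route from the paper. The paper never forms a squared recursion for $\bx^A-\bx^M$ directly; instead it inserts the auxiliary process $\bx_{\tau_{k+1}}^B=\Pi_{\cX}\left(\bx_{\tau_k}^B-\alpha_k\nabla f(\bx_{\tau_k}^M,\bz_k)\right)$, whose noisy gradient is evaluated along the \emph{mean} path. This makes the $A$--$B$ coupling purely deterministic (the noise cancels), so a Gr\"onwall iteration gives $\|\bx_{\tau_k}^A-\bx_{\tau_k}^B\|\le(e^{\ell(\tau_k-\tau_{k_0})}-1)\max_i\|\bx_{\tau_i}^B-\bx_{\tau_i}^M\|$, and all randomness is confined to the $B$--$M$ pair, where the martingale coefficient $\bv_i=\bx_{\tau_i}^M-\bx_{\tau_i}^B$ is trivially bounded by $D$ since both points lie in $\cX$. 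Your direct route replaces the auxiliary process by the weighted telescoping of the squared recursion, together with the nonnegativity observation $M_s+Q_s\ge 0$ that lets you pull the Gr\"onwall factor out of the maximum; that observation has no counterpart in the paper and is a nice piece of bookkeeping. After that, the two proofs converge: both bound the cross term by a conditionally sub-Gaussian supermartingale plus Doob/Ville maximal inequality with optimized $\lambda$, yielding exactly $2\sqrt2\,\hat\sigma D\sqrt{\epsilon}\bigl(\sum_j\alpha_j^2\bigr)^{1/2}$, and both handle $\sum_j\alpha_j^2\|\bz_j\|^2$ via the moment-generating-function bound $\bbE[e^{\lambda\|\bz\|^2}]\le e^{n\lambda\hat\sigma^2+4\lambda^2\hat\sigma^4}$ for $\lambda\le\tfrac{1}{4\hat\sigma^2}$ (the paper derives it by a Gaussian-integration argument adapted from Wainwright), giving the bucket $(4\hat\sigma^2\epsilon+(n+1)\hat\sigma^2)\sum_j\alpha_j^2$. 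What the paper's detour through $\bx^B$ buys is that the coefficient bound $\|\bv_i\|\le D$ is immediate; what your route buys is one fewer process and a single unified recursion.

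The obstacle you flag about the coefficient $(1+\alpha_j\ell)\|e_j\|$ is real but dissolves under your own normalization: the telescoping weight at step $j$ is $w_j=\prod_{i=k_0}^{j}(1+\alpha_i\ell)^{-2}$, so $w_j\|e_j-\alpha_j d_j\|\le(1+\alpha_j\ell)^{-1}\|e_j\|\le D$, and the clean variance proxy $4\hat\sigma^2D^2\sum_j\alpha_j^2$ --- hence the exact constant $2\sqrt2\,\hat\sigma D$ --- is recovered with no correction term to absorb. One caveat you share with the paper: intersecting the two concentration events via a union bound yields probability at least $1-2e^{-\epsilon}$, which is slightly \emph{weaker} than the stated $(1-e^{-\epsilon})^2=1-2e^{-\epsilon}+e^{-2\epsilon}$; since the events are not independent, the product form is not actually justified, though the paper itself uses $(1-e^{-\epsilon})^2\ge 1-2e^{-\epsilon}$ downstream (in the proof of Lemma~\ref{lem:x_A2C_b}), so nothing of consequence breaks in either argument.
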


% Remark: the bounds on $\bbE[\|\bx_{\tau_k}^A - \bx_{\tau_k}^M\|]$ under the two assumptions of $\bz_k$ is very close. Only $\sigma$ is replaced by the $L$-mixing statistics under the latter assumption.

% \textbf{Bounding $\|\bx_{T}^C - \bx_T^D\|$}

% The following lemma relies on Lemma 2.2 (i) in \citep{tanaka1979stochastic}. 
%
\begin{lemma} \label{lem:x_C2D}
  Assume that $\bx_{\tau_{k_0}}^C = \bx_{\tau_{k_0}}^D$, for all $k \in \bbN$, $k \ge k_0$, the following bound holds
  \begin{align*}
    \left\|\bx_{\tau_k}^C - \bx_{\tau_k}^D \right\| \le \sqrt{2 r^{-1}  u \left((\tau_k -\tau_{k_0}) D u  + D^2\right) \max_{j \in [k_0,k]} \alpha_j}.
  \end{align*}
\end{lemma}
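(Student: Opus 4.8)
The plan is to exploit the Skorokhod-problem formulation: both $\bx^C=\cS(\by^C)$ and $\bx^D=\cS(\cD(\by^C))$ are reflected trajectories in the convex set $\cX$ driven by the absolutely continuous input $\by^C$ and by its piecewise-constant discretization $\cD(\by^C)$, respectively. Writing $\bx^C_t=\by^C_t+\bphi^C_t$ and $\bx^D_t=\cD(\by^C)_t+\bphi^D_t$ with reflection terms $\bphi^C_t=-\int_0^t\bv^C_s\,d\bmu^C(s)$ and $\bphi^D_t=-\int_0^t\bv^D_s\,d\bmu^D(s)$, where $\bv^C_s\in\cN_{\cX}(\bx^C_s)$ and $\bv^D_s\in\cN_{\cX}(\bx^D_s)$ are unit normals, I would decompose the difference as $\bx^C-\bx^D=A+B$, where $A_t=\by^C_t-\cD(\by^C)_t$ is the input discretization error and $B_t=\bphi^C_t-\bphi^D_t$ is the reflection difference.

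First I would differentiate $\|\bx^C_t-\bx^D_t\|^2$. The monotonicity property of the Skorokhod problem (Lemma 2.2(i) of \citep{tanaka1979stochastic}), namely $\langle\bx^C_s-a,\bv^C_s\rangle\ge 0$ and $\langle\bx^D_s-a,\bv^D_s\rangle\ge 0$ for every $a\in\cX$, applied with $a=\bx^D_s$ and $a=\bx^C_s$ respectively, shows that $2\langle\bx^C_s-\bx^D_s,\,d\bphi^C_s-d\bphi^D_s\rangle\le 0$; hence $d\|\bx^C_s-\bx^D_s\|^2\le 2\langle\bx^C_s-\bx^D_s,\,dA_s\rangle=d\|A_s\|^2+2\langle B_s,dA_s\rangle$. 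Integrating from $\tau_{k_0}$, where $A=B=0$ since $\bx^C_{\tau_{k_0}}=\bx^D_{\tau_{k_0}}$, and using that $A_{\tau_k}=0$ at every grid point because $\cD(\by^C)_{\tau_k}=\by^C_{\tau_k}$, gives $\|\bx^C_{\tau_k}-\bx^D_{\tau_k}\|^2\le 2\int_{\tau_{k_0}}^{\tau_k}\langle B_s,dA_s\rangle$. An integration by parts, again using $A_{\tau_{k_0}}=A_{\tau_k}=0$, rewrites this as $-2\int_{\tau_{k_0}}^{\tau_k}\langle A_s,dB_s\rangle$, which is bounded by $2\big(\sup_s\|A_s\|\big)$ times the total variation of $B$, and the latter is at most $(\bmu^C+\bmu^D)([\tau_{k_0},\tau_k])$ since the normals are unit vectors.

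It then remains to bound the two factors. For the input error, $\|A_s\|=\|\int_{\tau_j}^{s}\nabla\bar f(\bx^C_u)\,du\|\le u\,\alpha_j\le u\max_{j\in[k_0,k]}\alpha_j$ for $s\in[\tau_j,\tau_{j+1})$, using $\|\nabla\bar f\|\le u$. For the reflection variation I would use the key geometric consequence of the inner ball of radius $r$ contained in $\cX$: for any $x\in\partial\cX$ and unit normal $\bv\in\cN_{\cX}(x)$ one has $\langle x,\bv\rangle\ge r$, obtained by taking $z=r\bv\in\cX$ in the normal-cone inequality $\langle\bv,x-z\rangle\ge0$. Applying the Lyapunov function $\frac{1}{2}\|\bx^C_t\|^2$ along the reflected representation \eqref{eq:x_C_Normal} yields $\frac{1}{2}\|\bx^C_{\tau_k}\|^2-\frac{1}{2}\|\bx^C_{\tau_{k_0}}\|^2=\int_{\tau_{k_0}}^{\tau_k}\langle\bx^C_t,-\nabla\bar f(\bx^C_t)\rangle\,dt-\int_{\tau_{k_0}}^{\tau_k}\langle\bx^C_t,\bv^C_t\rangle\,d\bmu^C(t)\le Du\,(\tau_k-\tau_{k_0})-r\,\bmu^C([\tau_{k_0},\tau_k])$, where $\langle\bx^C_t,-\nabla\bar f(\bx^C_t)\rangle\le Du$ because $\|\bx^C_t\|\le D$. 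Rearranging and using $\|\bx^C_{\tau_{k_0}}\|\le D$ gives $\bmu^C([\tau_{k_0},\tau_k])\le r^{-1}\big(Du(\tau_k-\tau_{k_0})+\frac{1}{2}D^2\big)$, and the identical argument bounds $\bmu^D$. Substituting both factors and collecting constants produces the claimed estimate $\|\bx^C_{\tau_k}-\bx^D_{\tau_k}\|\le\sqrt{2r^{-1}u\big((\tau_k-\tau_{k_0})Du+D^2\big)\max_{j\in[k_0,k]}\alpha_j}$.

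The main obstacle I anticipate is the rigorous treatment of the jump discontinuities: the discretized input $\cD(\by^C)$ jumps at each grid point, so $\bx^D$ and $\bphi^D$ jump as well, and both the differential inequality and the integration by parts acquire jump-correction terms. Controlling these carefully, so that the clean estimate $\|\bx^C_{\tau_k}-\bx^D_{\tau_k}\|^2\le 2(\sup_s\|A_s\|)\cdot(\bmu^C+\bmu^D)([\tau_{k_0},\tau_k])$ survives, is precisely what invoking Lemma 2.2(i) of \citep{tanaka1979stochastic} is meant to supply; reconciling the exact constants in the reflection bound (the relative coefficients of $D^2$ and $(\tau_k-\tau_{k_0})Du$) with the stated inequality is a secondary bookkeeping matter that I expect to absorb into the factor arrangement above.
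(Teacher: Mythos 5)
Your route is essentially the paper's. The inequality you reach by combining normal-cone monotonicity with integration by parts, namely $\|\bx_{\tau_k}^C-\bx_{\tau_k}^D\|^2\le -2\int_{\tau_{k_0}}^{\tau_k} A_s^\top\, dB_s$ with $A_s=\by_s^C-\by_s^D$ and $B_s=\bphi_s^C-\bphi_s^D$, is exactly what the paper obtains by citing Lemma 2.2(i) of Tanaka (the boundary terms vanish because $\by_{\tau_k}^D=\by_{\tau_k}^C$ at grid points); your bound $\sup_s\|A_s\|\le u\max_{j\in[k_0,k]}\alpha_j$ is identical to the paper's; and your Lyapunov control of the reflection via $\frac{1}{2}\|\bx_t^C\|^2$ together with the inner $r$-ball is the same computation the paper performs, merely rearranged: the paper writes $(\bx_t^C)^\top\bv_t^C=\delta(\bv_t^C\,\vert\,\cX)$ and integrates it against $d\bmu^C$, then uses the gauge inequality $\gamma(x\,\vert\,\cX)\le r^{-1}\|x\|$, whereas you lower-bound $x^\top\bv\ge r$ for unit normals and bound $\bmu^C$ directly — the same fact used at a different point in the chain.

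The one substantive slip is your final assembly. Bounding $-2\int A_s^\top dB_s$ by $2(\sup_s\|A_s\|)$ times the total variation of $B$, i.e. by $2(\sup_s\|A_s\|)\,(\bmu^C+\bmu^D)([\tau_{k_0},\tau_k])$, gives after inserting your two measure bounds the estimate $2r^{-1}u\left(2(\tau_k-\tau_{k_0})Du+D^2\right)\max_j\alpha_j$, which is strictly weaker than the stated $2r^{-1}u\left((\tau_k-\tau_{k_0})Du+D^2\right)\max_j\alpha_j$ — this is not absorbable by "factor arrangement." Moreover, bounding $\bmu^D$ is not quite "the identical argument": $\bx^D$ evolves by jumps, so the Lyapunov identity acquires terms $\langle\bx_s^D,\Delta\by_s^D\rangle$ and second-order jump corrections that you gloss over. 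Both issues disappear with the paper's key observation, which you missed: $\bmu^D$ is supported on the discrete set $\{\tau_0,\tau_1,\ldots\}$, and $A_{\tau_j}=0$ precisely there, so $\int A_s^\top\bv_s^D\,d\bmu^D(s)=0$ identically. Only the $\bmu^C$ term survives, no bound on $\bmu^D$ is needed at all, and your own computation then yields $2r^{-1}u\left((\tau_k-\tau_{k_0})Du+\tfrac{1}{2}D^2\right)\max_j\alpha_j$, which is within the stated bound. With that one correction your proof is complete and matches the paper's.
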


\begin{lemma} \label{lem:x_C2C}
  For all $t \in [ \tau_k, \tau_{k+1})$, the following bound holds
  $$
  \left\|\bx_t^C - \bx_{\tau_k}^C \right\| \le \alpha_k u.
  $$
\end{lemma}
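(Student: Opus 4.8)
The plan is to control the displacement of the continuous process over the single step interval $[\tau_k,\tau_{k+1})$, whose length is exactly $\tau_{k+1}-\tau_k=\alpha_k$, by integrating the velocity field of the defining ODE~\eqref{eq:x_C}. First I would express the difference as an integral of the time derivative via the fundamental theorem of calculus,
\[
\bx_t^C - \bx_{\tau_k}^C = \int_{\tau_k}^{t} \Pi_{T_{\cX}(\bx_s^C)}\left(-\nabla \bar{f}(\bx_s^C)\right)\, ds,
\]
using that the right-hand side of \eqref{eq:x_C} is precisely $\tfrac{d}{ds}\bx_s^C$. Taking norms and applying the triangle inequality for integrals gives
\[
\left\|\bx_t^C - \bx_{\tau_k}^C\right\| \le \int_{\tau_k}^{t} \left\|\Pi_{T_{\cX}(\bx_s^C)}\left(-\nabla \bar{f}(\bx_s^C)\right)\right\|\, ds.
\]

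The key step is to bound the integrand uniformly by $u$. Since $T_{\cX}(x)$ is a closed convex cone (taking $t=0$ in the definition $T_{\cX}(x)=\{t(y-x)\mid y\in\cX,\,t\ge 0\}$ shows $0\in T_{\cX}(x)$), we have $\Pi_{T_{\cX}(x)}(0)=0$. By non-expansiveness of the Euclidean projection onto a closed convex set, this yields, for any $g\in\bbR^n$,
\[
\left\|\Pi_{T_{\cX}(x)}(g)\right\| = \left\|\Pi_{T_{\cX}(x)}(g) - \Pi_{T_{\cX}(x)}(0)\right\| \le \|g\|.
\]
Applying this with $g=-\nabla \bar{f}(\bx_s^C)$ and invoking the general assumption $\|\nabla \bar{f}(x)\|\le u$ bounds the integrand by $u$.

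Finally I would integrate this constant bound over the interval, whose length satisfies $t-\tau_k \le \tau_{k+1}-\tau_k = \alpha_k$, concluding
\[
\left\|\bx_t^C - \bx_{\tau_k}^C\right\| \le u\,(t-\tau_k) \le u\,\alpha_k.
\]
There is no serious obstacle here; the only point requiring genuine care is the observation that the tangent cone contains the origin, so that its projection is norm-non-increasing, which is exactly what converts the velocity bound $\|\nabla\bar f\|\le u$ into the desired displacement bound.
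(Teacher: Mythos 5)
Your proof is correct and matches the paper's argument essentially step for step: both integrate the velocity field of the ODE, bound the integrand by $\|\nabla \bar{f}(\bx_s^C)\|\le u$ via non-expansiveness of the projection onto $T_{\cX}(\bx_s^C)$ together with $0\in T_{\cX}(x)$ (so $\Pi_{T_{\cX}(x)}(0)=0$), and use that the interval has length at most $\alpha_k$. No differences worth noting.
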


% \textbf{Bounding $\|\bx_{t}^C - \bx_t^D\|$ for all $t \in [0,T]$}
\begin{lemma} \label{lem:x_C2D_continuous}
  Assume $\bx_{\tau_{k_0}}^C = \bx_{\tau_{k_0}}^D \in \cX$, for all $t \in [\tau_k,\tau_{k+1})$ where $k \in \bbN$, $k \ge k_0$, the following bound holds
  \begin{align*}
    \left\|\bx_{t}^C - \bx_t^D \right\| \le \alpha_k u + \sqrt{2 r^{-1} u \left((\tau_k -\tau_{k_0}) D u  + D^2\right) \max_{j \in [k_0,k]} \alpha_j}.
  \end{align*}
\end{lemma}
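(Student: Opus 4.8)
The plan is to reduce the bound to the two previously established lemmas, Lemma~\ref{lem:x_C2C} and Lemma~\ref{lem:x_C2D}, by exploiting the piecewise-constant structure of the discretized process $\bx^D$. The first step is to observe that $\bx_t^D$ is constant on each interval $[\tau_k,\tau_{k+1})$, equal to the grid value $\bx_{\tau_k}^D$. This follows from the construction $\bx^D = \cS(\cD(\by^C))$: the discretized input $\cD(\by^C)$ equals the constant $\by_{\tau_k}^C$ throughout $[\tau_k,\tau_{k+1})$, so between grid points the Skorokhod problem is driven by a path of zero variation, and since the current iterate already lies in $\cX$, the reflection remains inactive and the solution does not move. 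Equivalently, the recursion $\bx_{\tau_{k+1}}^D = \Pi_{\cX}(\bx_{\tau_k}^D + \by_{\tau_{k+1}}^C - \by_{\tau_k}^C)$ updates $\bx^D$ only at grid points, so $\bx_t^D = \bx_{\tau_k}^D$ for every $t \in [\tau_k,\tau_{k+1})$.

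With this in hand, I would apply the triangle inequality, inserting the grid value $\bx_{\tau_k}^C$ of the continuous process:
$$
\|\bx_t^C - \bx_t^D\| = \|\bx_t^C - \bx_{\tau_k}^D\| \le \|\bx_t^C - \bx_{\tau_k}^C\| + \|\bx_{\tau_k}^C - \bx_{\tau_k}^D\|.
$$
The first term measures the within-step oscillation of the continuous process and is bounded by $\alpha_k u$ via Lemma~\ref{lem:x_C2C}. The second term is exactly the grid-point discretization error controlled by Lemma~\ref{lem:x_C2D}, namely $\sqrt{2 r^{-1} u ((\tau_k - \tau_{k_0}) D u + D^2)\max_{j \in [k_0,k]} \alpha_j}$. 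Since the stated bound is precisely the sum of these two quantities, adding them yields the claim with no further estimation.

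The only genuinely substantive step is the first one: justifying that $\bx^D$ is constant between grid points. This rests on the behavior of the Skorokhod solution under an input that is constant over a subinterval, which I would cite from the Skorokhod-problem material in Appendix~\ref{app:skorokhod}, or argue directly from the update recursion above. Once that is settled, the remainder is a single triangle inequality together with two direct invocations of prior lemmas, so I expect the discretization-invariance observation, rather than any calculation, to be the crux.
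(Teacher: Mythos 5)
Your proof is correct and follows essentially the same route as the paper's: a triangle inequality through the grid value $\bx_{\tau_k}^C$, then Lemma~\ref{lem:x_C2C} for the within-step term and Lemma~\ref{lem:x_C2D} for the grid-point term. The only difference is that you explicitly justify the piecewise constancy $\bx_t^D = \bx_{\tau_k}^D$ on $[\tau_k,\tau_{k+1})$ (via the Skorokhod construction in Appendix~\ref{app:skorokhod}), a step the paper uses tacitly when it replaces $\bx_t^D$ by $\bx_{\tau_k}^D$ --- making that explicit is a mild improvement, not a different argument.
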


% \textbf{Bounding $\|\bx_{T}^M - \bx_T^D\|$ }

\begin{lemma} \label{lem:x_M2D}
  Assume $\bx_{\tau_{k_0}}^M = \bx_{\tau_{k_0}}^D \in \cX$, $\alpha_k \le \frac{1}{2}$ for all $k \in \bbN$, $k \ge k_0$, the following bound holds 
  \begin{align*}
     \left\|\bx_{\tau_k}^M - \bx_{\tau_k}^D\right\| \le (e^{\ell (\tau_{k}-\tau_{k_0})} -1) \max_{s \in [k_0, k]} \sqrt{\alpha_s}\left( u+ \sqrt{2 r^{-1}  u \left((\tau_{k}-\tau_{k_0}) D u  + D^2 \right)  }\right).
  \end{align*}
\end{lemma}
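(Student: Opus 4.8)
The plan is to derive and solve a linear (discrete Grönwall) recursion for the error $e_k := \|\bx_{\tau_k}^M - \bx_{\tau_k}^D\|$. Both iterates are projected-gradient steps that differ only in their increment: $\bx_{\tau_{k+1}}^M = \Pi_{\cX}(\bx_{\tau_k}^M - \alpha_k \nabla \bar{f}(\bx_{\tau_k}^M))$, while $\bx_{\tau_{k+1}}^D = \Pi_{\cX}(\bx_{\tau_k}^D - \int_{\tau_k}^{\tau_{k+1}} \nabla \bar{f}(\bx_t^C)\,dt)$, and both start from the common point $\bx_{\tau_{k_0}}^M = \bx_{\tau_{k_0}}^D$, so $e_{k_0}=0$. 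First I would use non-expansiveness of the convex projection $\Pi_{\cX}$ to bound $e_{k+1}$ by the norm of the pre-projection difference. Since $\alpha_k = \tau_{k+1}-\tau_k$, I can write $\alpha_k \nabla \bar{f}(\bx_{\tau_k}^M) = \int_{\tau_k}^{\tau_{k+1}} \nabla \bar{f}(\bx_{\tau_k}^M)\,dt$, so the increment discrepancy is exactly $\int_{\tau_k}^{\tau_{k+1}} \left(\nabla \bar{f}(\bx_t^C) - \nabla \bar{f}(\bx_{\tau_k}^M)\right) dt$, and $\ell$-Lipschitzness of $\nabla\bar{f}$ yields $e_{k+1} \le e_k + \ell \int_{\tau_k}^{\tau_{k+1}} \|\bx_t^C - \bx_{\tau_k}^M\|\,dt$.

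Next I would control the integrand by the triangle inequality through $\bx_{\tau_k}^C$ and $\bx_{\tau_k}^D$, giving $\|\bx_t^C - \bx_{\tau_k}^M\| \le \|\bx_t^C - \bx_{\tau_k}^C\| + \|\bx_{\tau_k}^C - \bx_{\tau_k}^D\| + \|\bx_{\tau_k}^D - \bx_{\tau_k}^M\|$. The first term is at most $\alpha_k u$ by Lemma~\ref{lem:x_C2C}, the second is bounded by Lemma~\ref{lem:x_C2D}, and the third is $e_k$. Using $\alpha_k \le \tfrac12 \le 1$ so that $\alpha_k \le \sqrt{\alpha_k}$, both deterministic terms collapse into $M_k \beta_k$, where $M_k := \max_{s\in[k_0,k]}\sqrt{\alpha_s}$ and $\beta_k := u + \sqrt{2 r^{-1} u((\tau_k-\tau_{k_0})Du + D^2)}$ (note $\sqrt{\max_{j\in[k_0,k]}\alpha_j} = M_k$). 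Since this bound is uniform over $t \in [\tau_k,\tau_{k+1})$, integrating produces the recursion $e_{k+1} \le (1+\ell\alpha_k) e_k + \ell\alpha_k M_k \beta_k$.

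Finally I would unroll the recursion from $e_{k_0}=0$ to obtain $e_k \le \sum_{j=k_0}^{k-1} \ell\alpha_j M_j \beta_j \prod_{i=j+1}^{k-1}(1+\ell\alpha_i)$. Because $M_j\beta_j$ is nondecreasing in $j$, I pull $M_{k-1}\beta_{k-1} \le M_k\beta_k$ out of the sum as a uniform factor, bound each product via $1+x \le e^x$ by $\prod_{i=j+1}^{k-1}(1+\ell\alpha_i) \le e^{\ell(\tau_k - \tau_{j+1})}$, and then compare $\sum_{j=k_0}^{k-1}\ell\alpha_j e^{\ell(\tau_k-\tau_{j+1})}$ to the integral $\int_{\tau_{k_0}}^{\tau_k}\ell e^{\ell(\tau_k - s)}\,ds = e^{\ell(\tau_k-\tau_{k_0})}-1$. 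The monotonicity of $s \mapsto e^{\ell(\tau_k - s)}$ guarantees $\ell\alpha_j e^{\ell(\tau_k - \tau_{j+1})} \le \int_{\tau_j}^{\tau_{j+1}} \ell e^{\ell(\tau_k-s)}\,ds$, which telescopes exactly to the claimed prefactor and gives $e_k \le (e^{\ell(\tau_k-\tau_{k_0})}-1)\,M_k\beta_k$, matching the statement.

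The routine parts are the projection and Lipschitz estimates; the main obstacle is executing the Grönwall step so the prefactor comes out as the sharp $e^{\ell(\tau_k-\tau_{k_0})}-1$ rather than a looser $e^{\ell(\tau_k-\tau_{k_0})}$. The integral-comparison bound on $\sum_j \ell\alpha_j e^{\ell(\tau_k - \tau_{j+1})}$ is precisely what delivers the ``$-1$,'' and I would be careful that the $\alpha_k u$ term is absorbed into $M_k$ (rather than tracked separately) so that the whole deterministic contribution consolidates into the single factor $M_k \beta_k$.
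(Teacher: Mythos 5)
Your proof is correct, and it takes a genuinely different route from the paper. The paper argues in continuous time: it introduces the auxiliary process $\brho_t = \bx_t^M + \by_t^M - \by_{\tau_k}^M - (\bx_t^D + \by_t^C - \by_t^D)$, differentiates $t \mapsto \|\brho_t\|$ (which requires an indicator/limit argument, citing Lemma 20 of \citealp{lamperski2021projected}, to handle points where $\brho_s = 0$), bounds the derivative by $\ell(\|\bx_s^M - \bx_s^D\| + \|\bx_s^D - \bx_s^C\|)$, and invokes Lemma~\ref{lem:x_C2D_continuous} to control $\|\bx_s^C - \bx_s^D\|$ at all intermediate times. You instead work purely discretely: starting from the projected one-step form $\bx_{\tau_{k+1}}^D = \Pi_{\cX}\bigl(\bx_{\tau_k}^D - \int_{\tau_k}^{\tau_{k+1}} \nabla \bar{f}(\bx_t^C)\,dt\bigr)$ (which the paper derives from the Skorokhod representation with piecewise-constant input, so it is legitimately available to you), you use non-expansiveness of $\Pi_{\cX}$ at the jump times, split $\|\bx_t^C - \bx_{\tau_k}^M\|$ through the grid points via Lemma~\ref{lem:x_C2C} and Lemma~\ref{lem:x_C2D}, and run a discrete Gr\"onwall recursion. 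Both routes arrive at the identical recursion $e_{k+1} \le (1+\ell\alpha_k)e_k + \ell\alpha_k M_k \beta_k$ and the same Riemann-sum comparison $\sum_j \ell\alpha_j e^{\ell(\tau_k - \tau_{j+1})} \le \int_{\tau_{k_0}}^{\tau_k} \ell e^{\ell(\tau_k - s)}\,ds$ that delivers the sharp prefactor $e^{\ell(\tau_k - \tau_{k_0})} - 1$, and your consolidation of $\alpha_k u$ into $M_k\beta_k$ via $\alpha_k \le \sqrt{\alpha_k}$ matches the paper's last step. What your version buys is economy: it bypasses the continuous-time interpolation, the norm-differentiation technicality, and Lemma~\ref{lem:x_C2D_continuous} entirely, needing only grid-point bounds; what the paper's version buys is uniformity of framework --- all intermediate processes stay inside the same Skorokhod calculus used elsewhere in the appendix, and it controls the deviation at all $t$, not just at $\tau_k$, though for this lemma only the grid points matter.
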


\begin{proofof}{Lemma~\ref{lem:x_A2C}}
  
  For $t \in [\tau_{k}, \tau_{k+1})$, $\bx_t^A = \bx_{\tau_k}^A$, then the triangle inequality gives
\begin{align*}
    \|\bx_{t}^A - \bx_{t}^C\| \le \|\bx_{\tau_k}^A - \bx_{\tau_k}^M\| + \|\bx_{\tau_k}^M- \bx_{\tau_k}^D\| + \|\bx_{\tau_k}^D - \bx_{\tau_k}^C\| +\|\bx_{t}^C - \bx_{\tau_k}^C\|.
\end{align*}
% where $T = \tau_N$ and $\tau_0 = 0$ and $\tau_{k+1} = \tau_k + \alpha_k$.

For part (i), under Assumption \ref{assumption:bounded_var}, combininig Lemma~\ref{lem:x_A2M_bounded_var}, Lemma~\ref{lem:x_M2D}, Lemma~\ref{lem:x_C2D} and Lemma~\ref{lem:x_C2C} gives  

\begin{align*} 
  % \label{eq:x_A2C_tighter}
  & \bbE\left[\left\|\bx_{t}^A - \bx_{t}^C \right\| \right]\\
  &\le \sigma e^{\ell(-\tau_{k_0} + \tau_k)} \sqrt{\sum_{j=k_0}^{k-1} \alpha_j^2} +  (e^{\ell (\tau_{k}-\tau_{k_0})} -1) \max_{j \in [k_0, k]} \sqrt{\alpha_j}\left( u+ \sqrt{2 r^{-1}  u \left((\tau_{k}-\tau_{k_0}) D u  + D^2 \right)  }\right)  \nonumber\\
  & \hspace{190pt}+ \sqrt{2 r^{-1}  u \left((\tau_{k}-\tau_{k_0}) D u  + D^2 \right)  \max_{j \in [k_0,k]} \alpha_j  } + \alpha_k u\\
  & \le \sigma e^{\ell(\tau_k -\tau_{k_0})}  \sqrt{\sum_{j=0}^{k-1} \alpha_j^2}  +e^{\ell (\tau_{k} -\tau_{k_0})} \left( u +  \sqrt{2 r^{-1}  u \left((\tau_{k}- \tau_{k_0}) D u  + D^2 \right)   }  \right) \max_{j \in [k_0,k]} \sqrt{ \alpha_j}  \nonumber
\end{align*}
where the last inequality uses that $\alpha_k \le \sqrt{\alpha_k}$ for all $\alpha_k \le \frac{1}{2}$.

For part (ii), under Assumption \ref{assumption:L-mixing}, combining Lemma~\ref{lem:x_A2M_L_mixing}, Lemma~\ref{lem:x_M2D}, Lemma~\ref{lem:x_C2D} and Lemma~\ref{lem:x_C2C}  gives the desired result.

For part (iii), under Assumption \ref{assumption:sub-Gaussian}, combining Lemma~\ref{lem:x_A2M_high_P}, Lemma~\ref{lem:x_M2D}, Lemma~\ref{lem:x_C2D} and Lemma~\ref{lem:x_C2C}  gives the desired result.
\end{proofof}

\begin{proofof}{Lemma~\ref{lem:x_A2M_bounded_var}}

  We introduce another intermediate process where $\bx_{\tau_{k_0}}^B = \bx_{\tau_{k_0}}^M$:
  \begin{align} \label{eq:x_B}
    \bx_{\tau_{k+1}}^B = \Pi_{\cX}\left( \bx_{\tau_k}^B  - \alpha_k \nabla f(\bx_{\tau_k}^M, \bz_k)\right).
  \end{align}
  
 The  triangle inequality gives
  \begin{align} \label{eq:x_B_decomposed}
    \left\|\bx_{\tau_k}^A - \bx_{\tau_k}^M \right\| \le \left\|\bx_{\tau_k}^A - \bx_{\tau_k}^B \right\| + \left\|\bx_{\tau_k}^B - \bx_{\tau_k}^M\right\|.
  \end{align}
  
  Bound the first term on the RHS of \eqref{eq:x_B_decomposed} as:
  \begin{equation} \label{eq:x_A2B}
  \begin{aligned}
      \left\|\bx_{\tau_{k+1}}^A - \bx_{\tau_{k+1}}^B\right\| & \le \left\|\bx_{\tau_{k}}^A - \alpha_k \nabla f(\bx_{\tau_k}^A, \bz_k) - \left( \bx_{\tau_{k}}^B - \alpha_k \nabla f(\bx_{\tau_k}^M, \bz_k)\right) \right\| \\
      & \le \left\|\bx_{\tau_{k}}^A - \bx_{\tau_{k}}^B \right\| +\alpha_k \left\|\nabla f(\bx_{\tau_k}^A, \bz_k) - \nabla f(\bx_{\tau_k}^M, \bz_k) \right\| \\
      & \le \left\|\bx_{\tau_{k}}^A - \bx_{\tau_{k}}^B\| +\alpha_k \ell \| \bx_{\tau_k}^A - \bx_{\tau_k}^M\right\| \\
      & \le (1 + \alpha_k \ell )\left\|\bx_{\tau_{k}}^A - \bx_{\tau_{k}}^B\| +\alpha_k \ell \|\bx_{\tau_{k}}^B - \bx_{\tau_{k}}^M\right\|.
  \end{aligned}
  \end{equation}
  
  Bound the second term on the RHS of \eqref{eq:x_B_decomposed} as:
  \begin{equation} \label{eq:x_B2M}
  \begin{aligned}
    \left\|\bx_{\tau_{k+1}}^B - \bx_{\tau_{k+1}}^M \right\|^2 & \le \left\| \bx_{\tau_{k}}^B - \alpha_k \nabla f(\bx_{\tau_k}^M, \bz_k) - \left( \bx_{\tau_{k}}^M - \alpha_k \nabla \bar{f}(\bx_{\tau_k}^M)\right) \right\|^2 \\
    & = \left\|\bx_{\tau_{k}}^B - \bx_{\tau_{k}}^M\|^2 + \alpha_k^2 \|\nabla f(\bx_{\tau_k}^M, \bz_k) - \nabla \bar{f}(\bx_{\tau_k}^M )\right\|^2 \\
    & - 2 \alpha_k \left(\bx_{\tau_{k}}^B - \bx_{\tau_{k}}^M\right)^\top \left(\nabla f(\bx_{\tau_k}^M, \bz_k) - \nabla \bar{f}(\bx_{\tau_k}^M )\right).
  \end{aligned}
  \end{equation}

  Since $\bx_0$ is independent of all $\bz_k$ and all $\bz_k$ are independent, taking the expectation of the cross term of \eqref{eq:x_B2M} gives
  \begin{align*}
    &\bbE\left[ \left(\bx_{\tau_{k}}^B - \bx_{\tau_{k}}^M\right)^\top \left(\nabla f(\bx_{\tau_k}^M, \bz_k) - \nabla \bar{f}(\bx_{\tau_k}^M )\right) \right] \\
    =& \bbE\left[ \bx_{\tau_{k}}^B - \bx_{\tau_{k}}^M\right]^\top \bbE\left[\nabla f(\bx_{\tau_k}^M, \bz_k) - \nabla \bar{f}(\bx_{\tau_k}^M )\right]  
    = 0.
  \end{align*}
  
  Therefore, taking expectation over \eqref{eq:x_B2M} gives
  \begin{align} 
    \bbE\left[\left\|\bx_{\tau_{k+1}}^B - \bx_{\tau_{k+1}}^M\right\|^2 \right] & \le \bbE\left[\left\|\bx_{\tau_{k}}^B - \bx_{\tau_{k}}^M\right\|^2 \right] + \alpha_k^2 \sigma^2.
  \end{align}
  Iterating and Jensen's inequality gives
  \begin{align} \label{eq:exp_x_B2M}
    \bbE\left[\left\|\bx_{\tau_{k}}^B - \bx_{\tau_{k}}^M\right\| \right]\le  \sigma \sqrt{ \sum_{j=k_0}^{k-1}\alpha_j^2 }.
  \end{align}
  
  Taking expectation over \eqref{eq:x_A2B} and plugging \eqref{eq:exp_x_B2M}, we get 
  \begin{align} \label{eq:exp_x_A2B}
    \bbE[\|\bx_{\tau_{k}}^A - \bx_{\tau_{k}}^B\|] &\le (1 + \alpha_{k-1} \ell) \bbE[\|\bx_{\tau_{k-1}}^A - \bx_{\tau_{k-1}}^B\|] + \alpha_{k-1} \ell \sqrt{\sum_{j=k_0}^{k-2} \alpha_j^2} \sigma  \nonumber \\
    & \le \sum_{i=k_0+1}^{k-1} \Pi_{j = i+1}^{k-1}(1 + \alpha_{j} \ell) \alpha_i \ell \sqrt{\sum_{s= k_0}^{i-1} \alpha_s^2} \sigma  \nonumber\\
    & \le \sum_{i=k_0+1}^{k-1} e^{\ell (\tau_k - \tau_{i+1})} \alpha_{i} \ell \sqrt{\sum_{s=k_0}^{i-1} \alpha_s^2} \sigma \nonumber\\
    & \le e^{\ell \tau_k}  \ell \int_{\tau_{k_0+1}}^{\tau_{k}} e^{-\ell w} dw \sqrt{\sum_{s={k_0}}^{k-2} \alpha_s^2} \sigma \nonumber \\
    & \le (e^{\ell(-\tau_{k_0+1} + \tau_k)}-1) \sqrt{\sum_{s=k_0}^{k-2} \alpha_s^2} \sigma \nonumber \\
    & \le (e^{\ell( \tau_k-\tau_{k_0})}-1) \sqrt{\sum_{s=k_0}^{k-1} \alpha_s^2} \sigma
  \end{align}
  where the third inequality is because $1 + x \le e^x$ for all $x \ge 0$ and the second to the last inequality uses a Riemann sum bound.

  Combining \eqref{eq:exp_x_B2M} and \eqref{eq:exp_x_A2B} completes the proof.
  
  \end{proofof}

  \begin{proofof}{Lemma~\ref{lem:x_A2M_L_mixing}}

    To obtain the desired bound, we further introduce the following two intermediate processes:
    \begin{subequations}
    \begin{equation}
      \bx_{\tau_{k+1}}^{M,s} = \Pi_{\cX} \left(\bx_{\tau_{k}}^{M,s} - \alpha_k \bbE\left[\nabla f(\bx_{\tau_k}^{M,s}, \bz_k) \vert \cF_{k-s} \vee \cG\right]\right)
    \end{equation}
    \begin{equation}
      \bx_{\tau_{k+1}}^{B,s} = \Pi_{\cX} \left(\bx_{\tau_{k}}^{B,s} - \alpha_k \bbE\left[\nabla f(\bx_{\tau_k}^{M,s}, \bz_k) \vert \cF_{k-s-1} \vee \cG \right]\right)
    \end{equation}
  \end{subequations}
    where $\cG = \sigma(\{\bx_0\})$. We set $\cF_j =  \{\emptyset, \cZ\}$ for all $j < 0$. $\bx_{\tau_{k_0}}^{M,s} = \bx_{\tau_{k_0}}^{B,s} = \bx_{\tau_{k_0}}^A$ for all $s \ge 0$. For $s=0$, $\bx_{\tau_k}^{M,0} = \bx_{\tau_k}^{A}$ and for $ s> k$, $\bx_{\tau_k}^{M,s} = \bx_{\tau_k}^M$.
  
    Therefore, using the triangle inequality, we have 
    \begin{equation} \label{eq:x_A2M_decomposed}
    \begin{aligned}[b]
      \left\|\bx_{\tau_k}^A - \bx_{\tau_k}^M\right\| & \le \sum_{s= 0}^{k} \left\|\bx_{\tau_k}^{M,s} - \bx_{\tau_k}^{M,s+1} \right\| 
      & \le \sum_{s= 0}^{k} \left\|\bx_{\tau_k}^{M,s} - \bx_{\tau_k}^{B,s} \right\| + \sum_{s= 0}^{k} \left\|\bx_{\tau_k}^{B,s} - \bx_{\tau_k}^{M,s+1}\right\|.
    \end{aligned}
    \end{equation}
    In the following, we want to bound $\bbE\left[\left\|\bx_{\tau_k}^{M,s} - \bx_{\tau_k}^{B,s}\right\|\right]$ and $\bbE\left[\left\|\bx_{\tau_k}^{B,s} - \bx_{\tau_k}^{M,s+1}\right\|\right]$.
    \begin{align} \label{eq:x_M2Bs}
      & \left\|\bx_{\tau_{k+1}}^{M,s} - \bx_{\tau_{k+1}}^{B,s}\right\|^2 \nonumber \\
      & \le \left\|\bx_{\tau_{k}}^{M,s} - \bx_{\tau_{k}}^{B,s} -\alpha_k \left( \bbE[\nabla f(\bx_{\tau_k}^{M,s}, \bz_k) \vert \cF_{k-s} \vee \cG] - \bbE[\nabla f(\bx_{\tau_k}^{M,s}, \bz_k) \vert \cF_{k-s-1} \vee \cG ]\right) \right\|^2 \nonumber \\
      & = \left\|\bx_{\tau_{k}}^{M,s} - \bx_{\tau_{k}}^{B,s} \right\|^2 + \alpha_k^2 \left\|\bbE[\nabla f(\bx_{\tau_k}^{M,s}, \bz_k) \vert \cF_{k-s} \vee \cG] - \bbE\left[\nabla f(\bx_{\tau_k}^{M,s}, \bz_k) \vert \cF_{k-s-1} \vee \cG \right] \right\|^2  \\
      & - 2 \alpha_k \left(\bx_{\tau_{k}}^{M,s} - \bx_{\tau_{k}}^{B,s} \right)^\top  \left( \bbE\left[\nabla f(\bx_{\tau_k}^{M,s}, \bz_k) \vert \cF_{k-s} \vee \cG \right] - \bbE\left[\nabla f(\bx_{\tau_k}^{M,s}, \bz_k) \vert \cF_{k-s-1} \vee \cG\right]\right) \nonumber
    \end{align}
  
    We can show that the cross term has zero mean. By definition, $\bx_{\tau_k}^{M,s}$ is $\cF_{k-s-1}\vee \cG $-measurable and $\bx_{\tau_k}^{B,s}$ is $\cF_{k-s-2}\vee \cG $-measurable. Therefore, we have the following 
    \begin{align*}
      &\bbE\left[ \left(\bx_{\tau_{k}}^{M,s} - \bx_{\tau_{k}}^{B,s} \right)^\top  \left( \bbE\left[\nabla f(\bx_{\tau_k}^{M,s}, \bz_k) \vert \cF_{k-s} \vee \cG \right] - \bbE\left[\nabla f(\bx_{\tau_k}^{M,s}, \bz_k) \vert \cF_{k-s-1} \vee \cG \right]\right) \right] \\
      & = \bbE\left[ \left(\bx_{\tau_{k}}^{M,s} - \bx_{\tau_{k}}^{B,s} \right)^\top  \left( \bbE\left[ \bbE\left[\nabla f(\bx_{\tau_k}^{M,s}, \bz_k) \vert \cF_{k-s} \vee \cG \right] \Big| \cF_{k-s-1} \vee \cG\right] 
      \right. \right. \\
  &\left. \left. \hspace{160pt}
      - \bbE\left[ \bbE\left[\nabla f(\bx_{\tau_k}^{M,s}, \bz_k) \vert \cF_{k-s-1} \vee \cG \right] \Big| \cF_{k-s-1} \vee \cG \right]\right) \right] \\
      &=0.
    \end{align*}
  
    For the second term of \eqref{eq:x_M2Bs}, 
    \begin{align*}
      & \left\|\bbE\left[\nabla f(\bx_{\tau_k}^{M,s}, \bz_k) \vert \cF_{k-s} \vee \cG  \right] - \bbE\left[\nabla f(\bx_{\tau_k}^{M,s}, \bz_k) \vert \cF_{k-s-1} \vee \cG  \right] \right\|^2 \\
      & \le 2 \left\|\bbE\left[\nabla f(\bx_{\tau_k}^{M,s}, \bz_k) \vert \cF_{k-s} \vee \cG  \right] - \bbE\left[\nabla f\left(\bx_{\tau_k}^{M,s}, \bbE\left[\bz_k \vert \cF_{k-s}^+ \right]\right) \vert \cF_{k-s} \vee \cG  \right] \right\|^2  \\
      & +  2 \left\|\bbE[\nabla f(\bx_{\tau_k}^{M,s}, \bz_k) \vert \cF_{k-s-1} \vee \cG ] - \bbE[\nabla f\left(\bx_{\tau_k}^{M,s}, \bbE[\bz_k \vert \cF_{k-s}^+] \right) \vert \cF_{k-s}\vee \cG  ] \right\|^2  \\
      & \le 2 \ell^2 \bbE\left[\|\bz_k -   \bbE[\bz_k \vert \cF_{k-s}^+]\|^2 \vert \cF_{k-s} \vee \cG \right] + 2 \ell^2 \bbE\left[\|\bz_k -   \bbE[\bz_k \vert \cF_{k-s}^+ ]\|^2 \vert \cF_{k-s-1} \vee \cG \right].
    \end{align*} 

    Taking expectation and plugging in the $L$-mixing property gives
    \begin{align} \label{eq:L_mixing_ineq}
      \bbE\left[ \left\|\bbE[\nabla f(\bx_{\tau_k}^{M,s}, \bz_k) \vert \cF_{k-s} \vee \cG ] - \bbE[\nabla f(\bx_{\tau_k}^{M,s}, \bz_k) \vert \cF_{k-s-1} \vee \cG ] \right\|^2 \right] \le 4 \ell^2 \psi_2(s,\bz)^2.
    \end{align}
  
    Therefore, taking expectation of \eqref{eq:x_M2Bs} and plugging in \eqref{eq:L_mixing_ineq}, we have 
    \begin{align*}
      & \bbE\left[ \left\|\bx_{\tau_{k+1}}^{M,s} - \bx_{\tau_{k+1}}^{B,s} \right\|^2 \right] \le \bbE\left[ \left\|\bx_{\tau_{k}}^{M,s} - \bx_{\tau_{k}}^{B,s} \right\|^2 \right] + 4 \ell^2 \psi_2(s,\bz)^2 \alpha_k^2.
    \end{align*}
    
    Iterating gives
    \begin{align*}
      \bbE\left[ \left\|\bx_{\tau_{k}}^{M,s} - \bx_{\tau_{k}}^{B,s} \right\|^2 \right] \le  4 \ell^2 \psi_2(s,\bz)^2\sum_{j=k_0}^{k-1} \alpha_{j}^2.
    \end{align*}
  
    Jensen's inequality gives
    \begin{align} \label{eq:x_M2B}
      \bbE\left[ \left\|\bx_{\tau_{k}}^{M,s} - \bx_{\tau_{k}}^{B,s}\right\| \right] \le 2 \ell \psi_2(s,\bz) \sqrt{\sum_{j=k_0}^{k-1} \alpha_{j}^2}.
    \end{align}
  
    Now, we proceed to bound $\bbE\left[ \left\|\bx_{\tau_k}^{B,s} - \bx_{\tau_k}^{M,s+1} \right\|\right]$.
    \begin{align*}
      & \left\|\bx_{\tau_{k+1}}^{B,s} - \bx_{\tau_{k+1}}^{M,s+1} \right\| \\
      & \le \left\|\bx_{\tau_{k}}^{B,s} - \bx_{\tau_{k}}^{M,s+1} - \alpha_k \left(  \bbE\left[\nabla f(\bx_{\tau_k}^{M,s}, \bz_k) \vert \cF_{k-s-1} \vee \cG \right] - \bbE\left[\nabla f(\bx_{\tau_k}^{M,s+1}, \bz_k) \vert \cF_{k-s-1} \vee \cG  \right] \right) \right\| \\
      & \le  \left\|\bx_{\tau_{k}}^{B,s} - \bx_{\tau_{k}}^{M,s+1} \right\| + \alpha_k \ell \bbE\left[ \left\|\bx_{\tau_k}^{M,s} - \bx_{\tau_k}^{M,s+1} \right\| \Big| \cF_{k-s-1} \vee \cG  \right]
    \end{align*}
  
   Taking expectation gives 
    \begin{align*}
     \bbE\left[\left\|\bx_{\tau_{k+1}}^{B,s} - \bx_{\tau_{k+1}}^{M,s+1} \right\| \right]  &\le \bbE\left[\left\|\bx_{\tau_{k}}^{B,s} - \bx_{\tau_{k}}^{M,s+1} \right\| \right] + \alpha_k \ell \bbE\left[\left\|\bx_{\tau_k}^{M,s} - \bx_{\tau_k}^{M,s+1} \right\| \right]  \\
      & \le \bbE\left[\left\|\bx_{\tau_{k}}^{B,s} - \bx_{\tau_{k}}^{M,s+1} \right\| \right] + \alpha_k \ell \bbE\left[\left\|\bx_{\tau_k}^{M,s} - \bx_{\tau_k}^{B,s} \right\| \right] + \alpha_k \ell \bbE\left[\left\|\bx_{\tau_k}^{B,s} - \bx_{\tau_k}^{M,s+1} \right\| \right] \\
      & \le (1 + \alpha_k \ell)  \bbE\left[\left\|\bx_{\tau_{k}}^{B,s} - \bx_{\tau_{k}}^{M,s+1} \right\| \right]  + \alpha_k \ell \bbE\left[\left\|\bx_{\tau_k}^{M,s} - \bx_{\tau_k}^{B,s} \right\| \right].
    \end{align*}
  
    Plugging \eqref{eq:x_M2B} and iterating gives
    \begin{equation} \label{eq:x_B2Mplus}
    \begin{aligned}[b]
      \bbE\left[\left\|\bx_{\tau_{k}}^{B,s} - \bx_{\tau_{k}}^{M,s+1} \right\| \right] 
      & \le \sum_{i=k_0}^{k-1} \Pi_{j = i+1}^{k-1} (1+ \alpha_j \ell) \alpha_i \ell \bbE\left[\left\|\bx_{\tau_i}^{M,s} - \bx_{\tau_i}^{B,s} \right\| \right] \\
      & \le \sum_{i =k_0}^{k-1} e^{\ell \sum_{j = i+1}^{k-1} \alpha_j } \alpha_i \ell 2 \ell \psi_2(s,\bz) \sqrt{\sum_{s=k_0}^{i-1} \alpha_s^2} \\
      & \le 2 \ell \psi_2(s,\bz) \sqrt{\sum_{s=k_0}^{k-2} \alpha_s^2} \sum_{i=k_0}^{k-1} e^{\tau_k} e^{-\tau_{i+1}} \alpha_i  \ell \\
      & \le 2 \ell \psi_2(s,\bz) \sqrt{\sum_{s=k_0}^{k-2} \alpha_s^2}  e^{\tau_k} \ell \int_{\tau_{k_0+1}}^{\tau_k} e^{-\ell w} dw  \\
      & \le  2 \ell \psi_2(s,\bz) \sqrt{\sum_{s=k_0}^{k-2} \alpha_s^2}  ( e^{\ell(\tau_k - \tau_{k_0+1})}  -1) \\
      & \le 2 \ell \psi_2(s,\bz)  ( e^{\ell(\tau_k - \tau_{k_0})}  -1) \sqrt{\sum_{s=k_0}^{k-1} \alpha_s^2} .
    \end{aligned}
    \end{equation}
  
  Plugging the bounds from \eqref{eq:x_M2B} and \eqref{eq:x_B2Mplus} into \eqref{eq:x_A2M_decomposed} gives the desired bound.
    %  \begin{align}
    %   \bbE[\|\bx_{\tau_k}^A - \bx_{\tau_k}^M\|] \le  2 \ell_2 \Psi_2(s,\bz) \sqrt{\sum_{s=k_0}^{k-1} \alpha_s^2}  e^{\ell(\tau_k - \tau_{k_0+1})}
    % \end{align}
  \end{proofof}

\begin{proofof}{Lemma~\ref{lem:x_A2M_high_P}}

  $\bx_{\tau_k}^B$ is defined in Lemma~ \ref{lem:x_A2M_bounded_var}. Recall
  \begin{align*}
    %  \label{eq:x_B}
    \bx_{\tau_{k+1}}^B = \Pi_{\cX}\left( \bx_{\tau_k}^B  - \alpha_k \nabla f(\bx_{\tau_k}^M, \bz_k)\right).
  \end{align*}
  
  Triangle inequality gives
  \begin{align} \label{eq:decomposed_x_A2M}
    \left\| \bx_{\tau_k}^A - \bx_{\tau_k}^M \right\| \le \left\| \bx_{\tau_k}^A - \bx_{\tau_k}^B \right\|  + \left\| \bx_{\tau_k}^B - \bx_{\tau_k}^M \right\|.
  \end{align}
% Recall \eqref{eq:x_B_decomposed},
 So the goal is to bound $\| \bx_{\tau_k}^A - \bx_{\tau_k}^B\| $ and $\| \bx_{\tau_k}^B - \bx_{\tau_k}^M\|$.

  Similar to \eqref{eq:exp_x_A2B} but without taking expectaion, 
  % Iterating 
  %   \begin{align*}
  %       \|\bx_{\tau_{k+1}}^A - \bx_{\tau_{k+1}}^B\| & \le \|\bx_{\tau_{k}}^A - \alpha_k \nabla f(\bx_{\tau_k}^A, \bz_k) - \left( \bx_{\tau_{k}}^B - \alpha_k \nabla f(\bx_{\tau_k}^M, \bz_k)\right) \| \\
  %       & \le \|\bx_{\tau_{k}}^A - \bx_{\tau_{k}}^B\| +\alpha_k \|\nabla f(\bx_{\tau_k}^A, \bz_k) - \nabla f(\bx_{\tau_k}^M, \bz_k) \| \\
  %       & \le (1 + \alpha_k \ell )\|\bx_{\tau_{k}}^A - \bx_{\tau_{k}}^B\| +\alpha_k \ell \|\bx_{\tau_{k}}^B - \bx_{\tau_{k}}^M\|,
  %   \end{align*}
  we have 
  \begin{equation}
  \begin{aligned} \label{eq:deterministic_x_A2B}
   \left\| \bx_{\tau_k}^A - \bx_{\tau_k}^B \right\| &\le  \sum_{i=k_0+1}^{k-1} \Pi_{j = i+1}^{k-1}(1 + \alpha_{j} \ell) \alpha_i \ell \max_{i \in [k_0, k-1]} \left\|\bx_{\tau_i}^{B} - \bx_{\tau_i}^M \right\|  \\
   & \le  (e^{\ell(\tau_k-\tau_{k_0} )}-1) \max_{i \in [k_0, k-1]} \left\|\bx_{\tau_i}^{B} - \bx_{\tau_i}^M \right\|.
  \end{aligned}
\end{equation}

Thus, we want to bound $\|\bx_{\tau_i}^B - \bx_{\tau_i}^M \|$ for all $i \in [k_0, k-1]$.
  
  Iterating \eqref{eq:x_B2M} gives
  \begin{equation} 
    \label{eq:determinisitc_x_B2M}
    \begin{aligned}
      \|\bx_{\tau_{k}}^B - \bx_{\tau_{k}}^M\|^2 & 
      % \le \| \bx_{\tau_{k}}^B - \alpha_k \nabla f(\bx_{\tau_k}^M, \bz_k) - \left( \bx_{\tau_{k}}^M - \alpha_k \nabla \bar{f}(\bx_{\tau_k}^M)\right) \|^2 \\
      % & = \|\bx_{\tau_{k}}^B - \bx_{\tau_{k}}^M\|^2 + 2 \alpha_k (\bx_{\tau_{k}}^M - \bx_{\tau_{k}}^B)^\top \bz_k  + \alpha_k^2 \|\bz_k\|^2  \\
      & \le \sum_{i=k_0}^{k-1} 2 \alpha_i (\bx_{\tau_{i}}^M - \bx_{\tau_{i}}^B)^\top \bz_i  + \sum_{i=k_0}^{k-1}\alpha_i^2 \|\bz_i\|^2 .
    \end{aligned}
    \end{equation}

    In the following, we show how to bound the two terms on the RHS respectively.

    Let $\bv_i=  \bx_{\tau_i}^M - \bx_{\tau_i}^B$ and we have $\|\bv_i\| \le D$ for all $i$ from the assumption on $\cX$. First, we want to show $\max_{s \in [k_0,k-1]} 2 \sum_{i=k_0}^{s} \alpha_i \bv_i^\top \bz_i$ is sub-Gaussian. 
From the uniform sub-Gaussian Assumption \ref{assumption:sub-Gaussian}, we can obtain that for all $\lambda \in \bbR$:
    \begin{align*}
       \bbE\left[e^{\lambda 2 \sum_{i=k_0}^{k-1} \alpha_i \bv_i^\top \bz_i}\right]
      & \le e^{\frac{1}{2}{\lambda^2 4  D^2 \hat{\sigma}^2 } \sum_{i=k_0}^{k-1} \alpha_i^2 }.
  \end{align*}

By definition, $\bv_i^\top \bz_i$ is $\cF_{i} \vee \cG $-measurable, where $\cG$ is defined in Lemma~\ref{lem:x_A2M_L_mixing}. Then, 
  \begin{align*}
    \bbE\left[e^{\lambda 2 \sum_{i=k_0}^{s} \alpha_i \bv_i^\top \bz_i} \Big| \cF_{s-1} \vee \cG \right]
   & \le e^{ \lambda 2 \sum_{i=k_0}^{s-1} \alpha_i \bv_i^\top \bz_i + \frac{1}{2}{\lambda^2 4  D^2 \hat{\sigma}^2 }  \alpha_{s}^2 }.
\end{align*}

Let $M_{s}(\lambda) = e^{  \sum_{i=k_0}^{s} \left( 2 \lambda \alpha_i \bv_i^\top \bz_i - \frac{1}{2}{\lambda^2 4  D^2 \hat{\sigma}^2 }  \alpha_{i}^2 \right)} $. We can show that $M_{s}(\lambda)$ is supermartingale:
\begin{equation}\label{eq:supermartingale}
\begin{aligned}[b] 
  \bbE\left[ M_{s}(\lambda) \vert \cF_{s-1} \vee \cG \right] &\le   e^{  \sum_{i=k_0}^{s-1} \left( 2 \lambda \alpha_i \bv_i^\top \bz_i - \frac{1}{2}{\lambda^2 4  D^2 \hat{\sigma}^2 }  \alpha_{i}^2 \right)} \bbE\left[ e^{2 \lambda \alpha_s \bv_s^\top \bz_s - \frac{1}{2} \lambda^2 4 D^2 \hat{\sigma}^2 \alpha_s^2} \Big| \cF_{s-1} \vee \cG \right]  \\
  &\le M_{s-1}(\lambda).
\end{aligned}
\end{equation}
almost surely for all $s \ge k_0+1$.

By iterating \eqref{eq:supermartingale}, we have for all $s \in [k_0, k-1]$,
\begin{align*}
  \bbE\left[ M_{s}(\lambda) \right] \le 1.
\end{align*}

Using Doob's maximal inequality \citep[see][Theorem 3.9]{lattimore2020bandit} and choosing an $\epsilon >0$, we have 
\begin{align} 
  % \label{Doobs_ineq_cross}
  &\bbP\left( \max_{s \in [k_0,k-1]} M_{s}(\lambda) \ge e^{\epsilon}\right) \le e^{- \epsilon} \bbE\left[M_{k_0} (\lambda)\right]  \le e^{- \epsilon} \nonumber \\
   \Leftrightarrow & \; \bbP\left( \max_{s \in [k_0,k-1]}   \sum_{i=k_0}^{s} \left( 2 \lambda \alpha_i \bv_i^\top \bz_i - \frac{1}{2}{\lambda^2 4  D^2 \hat{\sigma}^2 }  \alpha_{i}^2 \right)  \ge \epsilon\right) \le e^{- \epsilon} \nonumber\\
    \Rightarrow & \;
   \bbP\left( \max_{s \in [k_0,k-1]}   \sum_{i=k_0}^{s} 2 \lambda \alpha_i \bv_i^\top \bz_i   \ge \epsilon + \sum_{i=k_0}^{k-1} \frac{1}{2}{\lambda^2 4  D^2 \hat{\sigma}^2 }  \alpha_{i}^2 \right) \le e^{-\epsilon}  \nonumber\\
   \Leftrightarrow & \;   \bbP\left( \max_{s \in [k_0,k-1]}   \sum_{i=k_0}^{s} 2  \alpha_i \bv_i^\top \bz_i   \ge \frac{\epsilon}{\lambda} + \sum_{i=k_0}^{k-1} \frac{1}{2}{\lambda 4  D^2 \hat{\sigma}^2 }  \alpha_{i}^2 \right) \le e^{-\epsilon}.  \label{eq:lastline}
\end{align}

The RHS of the inequality inside the probability of \eqref{eq:lastline} is minimized at $\lambda^* = \sqrt{\frac{\epsilon}{ \frac{1}{2}{ 4  D^2 \hat{\sigma}^2 }  \sum_{i=k_0}^{k-1}\alpha_{i}^2}}$.

Then plugging $\lambda^*$ into \eqref{eq:lastline} gives 
\begin{align} \label{eq:pr_event_A}
  \bbP\left( \max_{s \in [k_0,k-1]}   \sum_{i=k_0}^{s} 2  \alpha_i \bv_i^\top \bz_i   \ge  2 \sqrt{2} D \hat{\sigma} \sqrt{\epsilon} \sqrt{\sum_{i=k_0}^{k-1} \alpha_i^2}\right) \le e^{-\epsilon}.
\end{align}

Next, we want to bound $\max_{s \in [k_0, k-1]} \sum_{i=k_0}^{s} \alpha_i^2 \|\bz_i\|^2$.

The following is the modification of the proof of \citep[Theorem 2.6, IV]{wainwright2019high}.

Multiplying both sides of the definition of sub-Gaussian random vectors \eqref{eq:sub-Gaussian} by $e^{-\frac{1}{2t} \|v\|^2 \hat{\sigma}^2}$ with $t \in (0,1)$ gives
\begin{align} \label{eq:left_eq_right}
  \bbE\left[e^{v^\top \bz - \frac{1}{2t} \|v\|^2 \hat{\sigma}^2 }\right] \le e^{-\frac{1}{2}(\frac{1}{t} -1 ) \hat{\sigma}^2 \|v\|^2}.
\end{align}

Integrating both sides over $v$ gives
\begin{align} \label{eq:right}
  \int e^{-\frac{1}{2}(\frac{1}{t} -1 ) \hat{\sigma}^2 \|v\|^2} dv = \frac{\left(2 \pi\right)^{\frac{n}{2}}}{\left((\frac{1}{t} -1) \hat{\sigma}^2\right)^{\frac{n}{2}}}
\end{align}
and
\begin{equation}\label{eq:left}
\begin{aligned}[b]
  \int e^{v^\top \bz -\frac{1}{2t} \hat{\sigma}^2 \|v\|^2} dz 
  &= e^{\frac{1}{2}\frac{t}{\hat{\sigma}^2} \|\bz\|^2} \int e^{-\frac{1}{2} \frac{\hat{\sigma}^2}{t} \|v- \frac{t}{\hat{\sigma}^2} \bz\|^2}dv \\
  & =  e^{\frac{1}{2}\frac{t}{\hat{\sigma}^2} \|\bz\|^2} \frac{\left(2 \pi\right)^{\frac{n}{2}}}{\left(\frac{\hat{\sigma}^2}{t}\right)^{\frac{n}{2}}}.
\end{aligned}
\end{equation}

Plugging \eqref{eq:right} and \eqref{eq:left} into \eqref{eq:left_eq_right}, we have for all $t \in (0,1)$, 
\begin{align*}
  \bbE\left[ e^{\frac{1}{2}\frac{t}{\hat{\sigma}^2} \|\bz\|^2}\right] \le \frac{1}{(1-t)^{\frac{n}{2}}}.
\end{align*}

Let $\lambda = \frac{1}{2} \frac{t}{\hat{\sigma}^2}$.
Then for $0 < \lambda < \frac{1}{2 \hat{\sigma}^2}$,
\begin{align*}
  \bbE\left[ e^{\lambda \|\bz\|^2}\right] \le \frac{1}{(1- 2\hat{\sigma}^2 \lambda)^{\frac{n}{2}}}.
\end{align*}

Let $g(\lambda) = \log \frac{1}{(1- 2\hat{\sigma}^2 \lambda)^{\frac{n}{2}}} = - \frac{n}{2} \log(1-2 \lambda \hat{\sigma}^2)$. 
Then, applying the Taylor expansion gives 
\begin{align*}
  g(\lambda) = \frac{n}{2} \sum_{k=1}^\infty \frac{1}{k !} (2 \lambda \hat{\sigma}^2)^k &= n \lambda \hat{\sigma}^2 + \sum_{k=2}^\infty \frac{1}{k !} (2 \lambda \hat{\sigma}^2)^k \\
  & \le  n \lambda \hat{\sigma}^2 + \frac{1}{2} \frac{(2 \lambda \hat{\sigma}^2)^2}{1 - (2 \lambda \hat{\sigma}^2)}.
\end{align*}

If $2 \lambda \hat{\sigma}^2 \le \frac{1}{2}$, then $\lambda \le \frac{1}{4 \hat{\sigma}^2}$.
Therefore, $g(\lambda) \le  n \lambda \hat{\sigma}^2 + 4 \lambda^2 \hat{\sigma}^4$ and 
\begin{align*}
  \bbE\left[ e^{\lambda \|\bz\|^2}\right] \le e^{ n \lambda \hat{\sigma}^2 + 4 \lambda^2 \hat{\sigma}^4}.
\end{align*}

If $0 < \lambda \le \frac{1}{4 \alpha_i^2 \hat{\sigma}^2}$ for all $i \in [k_0, k-1]$, then we can show that 
\begin{align*}
  M_{s}(\lambda) = e^{\sum_{i = k_0}^s \left(\lambda \alpha_i^2 \|\bz_i\|^2 - n \lambda \alpha_i^2 \hat{\sigma}^2 - 4 \lambda^2 \alpha_i^4 \hat{\sigma}^4\right)}
\end{align*}
is also supermartingale with $\bbE\left[ M_{s}(\lambda)\right] \le 1$ for all $s \in [k_0, k-1]$.

Similar to the process of getting \eqref{eq:lastline} and choosing the same $\epsilon$, we have 
\begin{align}
  &\bbP\left(\max_{s \in [k_0, k-1]} M_{s}(\lambda) \ge e^{\epsilon} \right) \le e^{-\epsilon} \bbE\left[ M_{k_0}(\lambda)\right] \le e^{- \epsilon} \nonumber \\
  \Rightarrow \: & \bbP \left( \max_{s \in [k_0, k-1]}  \sum_{i = k_0}^{s} \alpha_i^2 \|\bz_i\|^2 \ge \frac{\epsilon}{\lambda} + \lambda 4 \sum_{i=k_0}^{k-1} \alpha_i^4 \hat{\sigma}^4 + \sum_{i=k_0}^{k-1} \alpha_i^2 n \hat{\sigma}^2 \right) \le e^{- \epsilon} \label{eq:lastline_2}
\end{align}

We can choose $\lambda = \frac{1}{4 \hat{\sigma}^2 \max_{i \in [k_0, k-1] }\alpha_i^2}$ and plugging it into the RHS of the inequality inside the probability in \eqref{eq:lastline_2}. Then, the following holds:
\begin{align}
  \nonumber
  &\bbP \left( \max_{s \in [k_0, k-1]}  \sum_{i = k_0}^{s} \alpha_i^2 \|\bz_i\|^2 \ge  4 \hat{\sigma}^2 \epsilon  \max_{i \in [k_0, k-1]} \alpha_i^2 + \hat{\sigma}^2 \frac{1}{\max_{i \in [k_0, k-1]} \alpha_i^2} \sum_{i=k_0}^{k-1} \alpha_i^4  + \sum_{i=k_0}^{k-1} \alpha_i^2 n \hat{\sigma}^2 \right)  \le e^{- \epsilon} \\
  \Rightarrow \:&
  \bbP \left( \max_{s \in [k_0, k-1]}  \sum_{i = k_0}^{s} \alpha_i^2 \|\bz_i\|^2 \ge  4 \hat{\sigma}^2 \epsilon \max_{i \in [k_0, k-1]} \alpha_i^2 + \left(\hat{\sigma}^2 + n \hat{\sigma}^2\right)\sum_{i=k_0}^{k-1} \alpha_i^2  \right)  \le e^{- \epsilon}  \nonumber \\
  \Rightarrow \: &
  \bbP \left( \max_{s \in [k_0, k-1]}  \sum_{i = k_0}^{s} \alpha_i^2 \|\bz_i\|^2 \ge \left( 4 \hat{\sigma}^2 \epsilon + \hat{\sigma}^2 + n \hat{\sigma}^2\right) \sum_{i=k_0}^{k-1} \alpha_i^2\right)  \le e^{- \epsilon} . \label{eq:pr_event_B}
\end{align}
where the first arrow holds because $\sum_{i=k_0}^{k-1} \alpha_i^4 \le  \max_{j \in [k_0, k-1]} \alpha_j^2 \sum_{i=k_0}^{k-1} \alpha_i^2$.

The intersection of the respective complements of the events in \eqref{eq:pr_event_A} and \eqref{eq:pr_event_B} is the event that $\max_{i \in [k_0, k-1]} \|\bx_{\tau_i}^B - \bx_{\tau_i}^M\|$ is upper bounded by 
$$
\left( 2 \sqrt{2} \hat{\sigma} D \sqrt{\epsilon}\sqrt{ \sum_{j=k_0}^{k-1} \alpha_j^2}  + \left( 4 \hat{\sigma}^2  \epsilon + \hat{\sigma}^2  + n \hat{\sigma}^2\right)  \sum_{j=k_0}^{k-1} \alpha_j^2 \right)^{1/2}.
$$
Such an event occurs with probability $(1-e^{-\epsilon})^2$.

Further combining \eqref{eq:decomposed_x_A2M} and \eqref{eq:deterministic_x_A2B} completes the proof.

\end{proofof}

  In the following proof, we follow the notation in \citep{rockafellar2015convex}. Let $\gamma(x \vert \cX)$ denote the gauge function:
\begin{align*}
  \gamma(x \vert \cX) = \inf\{t >0 \vert x \in t \cX\}
\end{align*}
and let $\delta(x \vert \cX)$ be the support function:
\begin{align*}
  \delta(x \vert \cX) = \sup\{y^\top x \vert y \in \cX\}.
\end{align*}

\begin{proofof}{Lemma~\ref{lem:x_C2D}}

Applying  Lemma 2.2 (i) in \citep{tanaka1979stochastic} gives

\begin{align} \label{eq:x_C2D}
    \left\|\bx_{\tau_k}^C - \bx_{\tau_k}^D \right\|^2 &\le \left\|\by_{\tau_k}^C - \by_{\tau_k}^D \right\|^2 + 2 \int_{\tau_{k_0}}^{\tau_k} (\by_{\tau_k}^C - \by_{\tau_k}^D - \by_s^C + \by_s^D)^\top(\bv_s^D d \bmu^D(s) - 
    \bv_s^C d \bmu^C(s)) \nonumber \\
    & \le 2 \int_{\tau_{k_0}}^{\tau_k} (\by_s^C - \by_s^D)^\top \bv_s^C d \bmu(s)  \nonumber\\
    & \le 2 \int_{\tau_{k_0}}^{\tau_k} \gamma(\by_s^C - \by_s^D \vert \cX) \delta(\bv_s^C \vert \cX) d \bmu^C(s) \nonumber\\
    & \le 2 \sup_{s \in [\tau_{k_0}, \tau_k] }\gamma(\by_s^C - \by_s^D \vert \cX) \int_{\tau_{k_0}}^{\tau_k} \delta(\bv_s^C \vert \cX) d \bmu^C(s).
\end{align}
The second inequality is because $\by_s^D = \by_{\tau_k}^C$ for all $s \in [\tau_k, \tau_{k+1})$, $\bmu^D$ is supported on the discrete set $\{\tau_0,\tau_1,\tau_2,\cdots\}$ and the integrand is zero on this set . The third inequality uses the inequality $x^\top y \le \gamma(x\vert \cX) \delta(y \vert \cX)$ and the last inequality follows H\"{o}lder's inequality.

Since $\cX$ contains a ball of radius $r$ around the origin, we have $\gamma(x \vert \cX) \le r^{-1} \|x\|$. Then, the following holds
\begin{equation}\label{eq:max_gauge}
\begin{aligned}[b] 
  \sup_{s \in [\tau_{k_0}, \tau_k] } \gamma(\by_s^C - \by_s^D \vert \cX) &\le r^{-1} \sup_{s \in [\tau_{k_0}, \tau_k] } \|\by_s^C - \by_s^D\| \\
    &\le r^{-1} \max_{j \in [{k_0},k]} \int_{\tau_j}^{\tau_{j+1}} \left\|\nabla \bar{f}(\bx_s^C)\right\| ds \\
    &\le r^{-1}  u \max_{j \in [{k_0},k]} \alpha_j.
\end{aligned}
\end{equation}

To bound the integral in \eqref{eq:x_C2D}, we take the following derivative
\begin{align} \label{eq:support_fun}
    d \|\bx_t^C\|^2 = 2 (\bx_t^C)^\top \left( - \nabla \bar{f}(\bx_t^C)dt - \bv_t^C d \bmu^C(t)\right) \nonumber\\
    \Leftrightarrow 2 (\bx_t^C)^\top \bv_t^C d \bmu^C(t) = -2 (\bx_t^C)^\top \nabla \bar{f}(\bx_t^C)dt - d \|\bx_t^C\|^2 
\end{align}

  By construction, $(\bx_t^C)^\top \bv_t^C = \sup\{x^\top \bv_t^C \vert x \in \cX\} = \delta(\bv_t^C \vert \cX)$. Therefore, taking the integral of  \eqref{eq:support_fun} gives
  \begin{equation} \label{eq:support_integral}
\begin{aligned}[b]
    & 2 \int_{\tau_{k_0}}^{\tau_k} \delta(\bv_s^C \vert \cX) d \mu(s) = -2 \int_{\tau_{k_0}}^{\tau_k}(\bx_s^C)^\top \nabla \bar{f}(\bx_s^C) ds+ \|\bx_{\tau_{k_0}}^C\|^2-  \|\bx_{\tau_k}^C\|^2 \\
\Leftrightarrow &
    \int_{\tau_{k_0}}^{\tau_k} \delta(\bv_s^C \vert \cX) d \mu(s) = - \int_{\tau_{k_0}}^{\tau_k}(\bx_s^C)^\top \nabla \bar{f}(\bx_s^C) ds + \frac{1}{2}\|\bx_{\tau_{k_0}}^C\|^2-  \frac{1}{2}\|\bx_{\tau_k}^C\|^2  \\
    & \hspace{90pt}\le (\tau_k - \tau_{k_0} )D u  + D^2.
\end{aligned}
\end{equation}

Plugging \eqref{eq:max_gauge} and \eqref{eq:support_integral} into \eqref{eq:x_C2D}, we have 
\begin{align*}
    \left\|\bx_{\tau_k}^C - \bx_{\tau_k}^D \right\|^2 \le 2 r^{-1}  u \left((\tau_k-\tau_{k_0}) D u  + D^2\right) \max_{j \in [k_0,k]} \alpha_j
\end{align*}
which gives 
\begin{align*}
  \left\|\bx_{\tau_k}^C - \bx_{\tau_k}^D \right\| \le \sqrt{2 r^{-1}  u \left( (\tau_k-\tau_{k_0}) D u  + D^2\right) \max_{j \in [k_0,k]} \alpha_j}.
\end{align*}
\end{proofof}

  \begin{proofof}{Lemma~\ref{lem:x_C2C}}
    \begin{align*}
        \left\|\frac{d\bx_t^C}{dt}\right\| &= \left\|\Pi_{T_{\cX}(\bx_t^C)}\left(- \nabla \bar{f}(\bx_t^C) \right)\right\| \\
        & = \left\|\Pi_{T_{\cX}(\bx_t^C)}\left(- \nabla \bar{f}(\bx_t^C)\right) - \Pi_{{T_{\cX}(\bx_t^C)}}\left(0\right) \right\| \\
        & \le \left\|\nabla \bar{f}(\bx_t^C)\right\|
    \end{align*}
    where the first equality uses $0 \in T_{\cX}(x)$ and the inequality uses the non-expansiveness of convex projection.
    
    Therefore, 
    \begin{align*}
        \left\|\bx_t^C - \bx_{\tau_k}^C \right\| &= \left\|\int_{\tau_k}^t \Pi_{T_{\cX}(\bx_s^C)}\left(- \nabla \bar{f}(\bx_s^C)\right) ds\right\| \\
        & \le \alpha_k u.
    \end{align*}
  \end{proofof}

\begin{proofof}{Lemma~\ref{lem:x_C2D_continuous}}

  For $t \in [\tau_k, \tau_{k+1})$, the triangle inequality gives
  \begin{align*}
      \|\bx_{t}^C - \bx_t^D\| & \le \|\bx_{t}^C - \bx_{\tau_k}^C +  \bx_{\tau_k}^C - \bx_t^D \| \\
      & \le \|\bx_{t}^C - \bx_{\tau_k}^C \| + \|\bx_{\tau_k}^C - \bx_{\tau_k}^D \| 
  \end{align*}
  
  Plugging Lemma~\ref{lem:x_C2C} and Lemma~\ref{lem:x_C2D} gives the desired bound.
  \end{proofof} 
  
\begin{proofof}{Lemma~\ref{lem:x_M2D}}

  Define $\brho_t = \bx_t^M + \by_t^M - \by_{\tau_k}^M - (\bx_t^D + \by_t^C -\by_t^D)$ for all $t \in [\tau_k, \tau_{k+1})$. This gives $\brho_{\tau_k} = \bx_{\tau_k}^M - \bx_{\tau_k}^D$.
  
  Then calculate 
  \begin{equation} \label{eq:d_rho}
  \begin{aligned}
      {d \|\brho_t\|} &= \left(\frac{\brho_t}{\|\brho_t\|} \right)^\top d \brho_t \\
      &= \left(\frac{\brho_t}{\|\brho_t\|} \right)^\top \left( \nabla \bar{f}(\bx_t^C) - \nabla \bar{f} (\bx_t^M)\right) dt \\
      & \le \left\| \nabla \bar{f}(\bx_t^C) - \nabla \bar{f}(\bx_t^M) \right\| dt \\
      &\le \ell \left\|\bx_t^M - \bx_t^C \right\| dt \\
      & \le \ell \left(\left\|\bx_t^M - \bx_t^D \right\| +\left\|\bx_t^D - \bx_t^C \right\| \right) dt
  \end{aligned}
\end{equation}
  where the second inequality is because $\nabla \bar{f}(x)$ is $\ell$-Lipschitz. 
  
  Taking the integral gives
  \begin{align*}
    \MoveEqLeft
      \|\brho_t\|  = \|\brho_{\tau_k}\| + \int_{\tau_k}^t d \|\brho_s\| \\
      & = \|\brho_{\tau_k}\|  + \lim_{\epsilon \downarrow 0 } \int_{\tau_k}^t  \indic{(\|\brho_s\| \ge \epsilon)} d \|\brho_s\| \\
      & \le \|\brho_{\tau_k}\|  + \lim_{\epsilon \downarrow 0 } \int_{\tau_k}^t  \indic{(\|\brho_s\| \ge \epsilon)} \ell  \left(\left\|\bx_s^M - \bx_s^D \right\| + \left\|\bx_s^D - \bx_s^C \right\| \right) ds \\
      & =  \|\brho_{\tau_k}\|  + \int_{\tau_k}^t  \ell  \left\|\bx_s^M - \bx_s^D \right\|ds + \int_{\tau_k}^t \ell  \left\|\bx_s^D - \bx_s^C \right\|  ds
  \end{align*}
  where the second equality is from Lemma 20 in \citep{lamperski2021projected} and the inequality uses \eqref{eq:d_rho}.
  
  Setting $t = \tau_{k+1}$ gives
  \begin{align}
      \|\brho_{\tau_{k+1}}\| \le (1+ \ell \alpha_k) \|\brho_{\tau_k}\| + \ell \int_{\tau_k}^{\tau_{k+1}} \left\|\bx_s^C - \bx_s^D \right\| ds.
  \end{align}
  
 Using the assumption that $\brho_{k_0} = \bx_{k_0}^M - \bx_{k_0}^D = 0$ and iterating gives
  \begin{align*}
      \|\brho_{\tau_k}\| &\le \sum_{i=k_0}^{k-1} \Pi_{j=i+1}^{k-1} (1 + \ell \alpha_j) \ell \int_{\tau_{i}}^{\tau_{i+1}} \left\|\bx_s^C - \bx_s^D \right\| ds \\
      & \le \sum_{i=k_0}^{k-1} \Pi_{j=i+1}^{k-1} (1 + \ell \alpha_j) \ell \alpha_{i}  \left(\max_{s \in [i, i+1]} \alpha_{s} u  +  \sqrt{2 r^{-1}  u \left((\tau_{i+1} -\tau_{k_0}) D u  + D^2 \right) \max_{j \in [k_0,i+1]} \alpha_j  }\right)\\
      & \le   \sum_{i=k_0}^{k-1} e^{\ell (\tau_{k} -\tau_{i+1})}\ell \alpha_{i} \left(\max_{s \in [i, i+1]} \alpha_{s} u  +   \sqrt{2 r^{-1}  u \left((\tau_{i+1} -\tau_{k_0}) D u  + D^2 \right) \max_{j \in [k_0,i+1]} \alpha_j  }\right) \\
      & =   \ell e^{\ell \tau_{k}} \sum_{i=k_0}^{k-1}  e^{-\ell \tau_{i+1}} \alpha_{i}  \left(\max_{s \in [i, i+1]} \alpha_{s} u  +  \sqrt{2r^{-1}  u \left((\tau_{i+1} -\tau_{k_0}) D u  + D^2 \right) \max_{j \in [k_0,i+1]} \alpha_j  }\right) \\
      & \le \ell e^{\ell \tau_{k}} \sum_{i=k_0}^{k-1} \int_{\tau_{i}}^{\tau_{i+1}} e^{-\ell w} dw \left(\max_{s \in [i, i+1]} \alpha_{s} u  +  \sqrt{2 r^{-1}  u \left((\tau_{i+1} -\tau_{k_0}) D u  + D^2 \right) \max_{j \in [k_0,i+1]} \alpha_j  }\right)\\
      & \le \ell e^{\ell \tau_{k}} \int_{\tau_{k_0}}^{\tau_k} e^{-\ell w} dw \left(\max_{s \in [k_0, k]} \alpha_{s} u  +  \sqrt{2 r^{-1}  u \left((\tau_{k}-\tau_{k_0}) D u  + D^2 \right)  \max_{j \in [k_0,k]} \alpha_j  }\right) \\
      & \le \ell e^{\ell \tau_{k}} \frac{1}{\ell} (e^{-\ell \tau_{k_0}}- e^{-\ell \tau_k}) \left(\max_{s \in [k_0, k]} \alpha_{s} u  +\sqrt{2 r^{-1}  u \left((\tau_{k}-\tau_{k_0}) D u  + D^2 \right)  \max_{j \in [k_0,k]} \alpha_j  }\right) \\
      & \le (e^{\ell (\tau_{k}-\tau_{k_0})} -1) \left(\max_{s \in [k_0, k]} \alpha_{s} u  +  \sqrt{2 r^{-1}  u \left((\tau_{k}-\tau_{k_0}) D u  + D^2 \right)  \max_{j \in [k_0,k]} \alpha_j  }\right) \\
      & \le (e^{\ell (\tau_{k}-\tau_{k_0})} -1) \max_{s \in [k_0, k]} \sqrt{\alpha_{s}} \left( u+ \sqrt{2 r^{-1}  u \left((\tau_{k}-\tau_{k_0}) D u  + D^2 \right)  }\right)
  \end{align*}
  where the second inequality uses Lemma~\ref{lem:x_C2D_continuous} and the last inequality uses that $\alpha_s \le \frac{1}{2}$ for all $s \in \bbN$.

  % Remark: Since the run time $T$ is fixed, this bound is basically $O(\max_{k \in [0,N]}{\sqrt{\alpha_k}})$.
  
  % \textcolor{red}{Note taking integral over $\|\bx_s^C -\bx_s^D\|$, we need to be careful about the right end point as it is different bound than the remaining of the range.}
  \end{proofof}

  \section{Supporting Results on Variational Geometry} \label{app:variational_geometry}
  The following lemmas are standard in the field of optimization and variational analysis. We present the proofs to support the results in the main paper.
  
  \begin{lemma} \label{lem:projection_characteristic}
    For any $x \in \bbR^n$ and convex set $\cX$, $y^* = \Pi_{\cX}(x)$ iff $x - y^* \in \cN_{\cX}(y^*)$ and $y^* \in \cX$.
    
    % the following statements are equivalent:
    % \begin{enumerate}
    %   \item $y^* = \Pi_{\cX}(x)$
    %   \item $\left< x - y^*,  y - y^* \right> \le 0,\; \forall y \in \cX$
    %   \item $x - y^* \in \cN_{\cX}(y^*)$
    % \end{enumerate}
  \end{lemma}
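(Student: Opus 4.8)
The plan is to prove the two implications of the biconditional separately, after first rewriting the normal cone membership in a more usable form. Using the definition $\cN_{\cX}(y^*) = \{\phi \mid \phi^\top y^* \ge \phi^\top z,\ \forall z \in \cX\}$ given in the notation section, the condition $x - y^* \in \cN_{\cX}(y^*)$ is equivalent to the variational inequality
\begin{align*}
  (x - y^*)^\top (z - y^*) \le 0 \quad \text{for all } z \in \cX.
\end{align*}
So the whole lemma reduces to showing that $y^*$ minimizes $\|x - \cdot\|$ over $\cX$ if and only if $y^* \in \cX$ and this inequality holds.

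For the forward direction, I would assume $y^* = \Pi_{\cX}(x)$, so $y^* \in \cX$ by definition, and extract the variational inequality from a first-order optimality argument. Fix an arbitrary $z \in \cX$; by convexity the segment $y^* + t(z - y^*)$ lies in $\cX$ for $t \in [0,1]$. The scalar function $\phi(t) = \|x - y^* - t(z - y^*)\|^2$ is minimized at $t = 0$ over $[0,1]$, so $\phi'(0) \ge 0$. Expanding $\phi$ and differentiating gives $\phi'(0) = -2(x - y^*)^\top (z - y^*)$, which yields $(x - y^*)^\top (z - y^*) \le 0$, i.e. $x - y^* \in \cN_{\cX}(y^*)$.

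For the converse, I would assume $y^* \in \cX$ and the variational inequality, then show directly that $y^*$ attains the minimum distance. For any $z \in \cX$, the decomposition
\begin{align*}
  \|x - z\|^2 = \|x - y^*\|^2 + 2 (x - y^*)^\top (y^* - z) + \|y^* - z\|^2
\end{align*}
combined with $(x - y^*)^\top (y^* - z) = -(x - y^*)^\top (z - y^*) \ge 0$ gives $\|x - z\|^2 \ge \|x - y^*\|^2 + \|y^* - z\|^2 \ge \|x - y^*\|^2$, so $y^*$ is a minimizer and hence equals $\Pi_{\cX}(x)$. This result is entirely standard and presents no genuine obstacle; the only point requiring care is matching the sign convention of the normal cone as defined in this paper, so that the variational inequality is oriented correctly in both directions of the argument. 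The strict positivity of $\|y^* - z\|^2$ for $z \ne y^*$ additionally shows the minimizer is unique, which is consistent with $\Pi_{\cX}$ being well defined.
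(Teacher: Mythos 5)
Your proof is correct and follows essentially the same route as the paper's: your converse direction is exactly the paper's expansion $\|x-z\|^2 - \|x-y^*\|^2 = \|y^*-z\|^2 + 2(x-y^*)^\top(y^*-z) \ge 0$. The only difference is in the forward direction, where the paper simply invokes the first-order necessary optimality condition $-\nabla f(y^*) \in \cN_{\cX}(y^*)$ for $f(y)=\frac{1}{2}\|y-x\|^2$ as a known fact, whereas you derive the variational inequality from scratch via the one-dimensional restriction $\phi(t) = \|x - y^* - t(z-y^*)\|^2$ and $\phi'(0)\ge 0$; this makes your argument self-contained (it is in effect the standard proof of the optimality condition the paper cites) at the cost of a few extra lines.
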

  \begin{proof}
    First, the definition of the convex projection is equivalent to
    \begin{align*}
      \Pi_{\cX}(x) = \argmin_{y \in \cX} \frac{1}{2} \|y-x\|^2.
    \end{align*}
    % which is a convex optimization problem.
    Set $f(y) = \frac{1}{2} \|y-x\|^2$ which is strongly convex thus has a unique minimizer.
  
    ($\Rightarrow$)
  
     Let $y^*$ be the minimizer of $f$, i.e. $y^* = \Pi_{\cX}(x)$. From the necessary optimality condition, we have $ - \nabla f(y^*) \in \cN_{\cX}(y^*)$, i.e. $x - y^* \in \cN_{\cX}(y^*)$. 
  
    ($\Leftarrow$)

    Let $y^* \in \cX$ and $x - y^* \in \cN_{\cX}(y^*)$.
  
    From the definition of normal cone, $x - y^* \in \cN_{\cX}(y^*)$ $\Leftrightarrow$ $\left< x - y^*,  y - y^* \right> \le 0, \; \forall y \in \cX$. Besides,
    \begin{align*}
      \|x - y\|^2 - \|x -y^*\|^2 &= \|x - y^* + y^* -y\|^2 -\|x-y^*\|^2 \\
      & = \|x - y^*\|^2 + \|y^* -y\|^2 +2 (x - y^*)^\top (y^* -y) -\|x-y^*\|^2  \\
      & \ge 0
    \end{align*}
    which implies that  $y^*$ is the minimizer of $f$, i.e. $y^* = \Pi_{\cX}(x)$.
  \end{proof}
  
  % The following lemma shows the non-negativity of an inner product.
  \begin{lemma} \label{lem:proj_tangent_cone_2}
    For all $x  \in \cX$, $g \in \bbR^n$, we have 
    \begin{align*}
      g^\top \Pi_{T_{\cX}(x)}(g) = \|\Pi_{T_{\cX}(x)}(g)\|^2
    \end{align*}
  \end{lemma}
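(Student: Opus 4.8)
The plan is to exploit the fact that $T_{\cX}(x)$ is a convex cone and apply the projection characterization from Lemma~\ref{lem:projection_characteristic} with the cone itself playing the role of the convex set. Write $K = T_{\cX}(x)$ and $p = \Pi_K(g)$. First I would record the two structural properties of $K$ that the proof relies on: since $K = \{t(y-x)\mid y\in\cX,\ t\ge 0\}$ is a cone, it contains $0$ (take $t=0$), and it is closed under positive scaling, so in particular $2p \in K$ whenever $p \in K$. The target identity $g^\top \Pi_K(g) = \|\Pi_K(g)\|^2$ is equivalent to showing that the residual is orthogonal to the projection, i.e. $(g-p)^\top p = 0$.

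Next I would invoke Lemma~\ref{lem:projection_characteristic}, applied to the convex set $K$, to conclude that $p = \Pi_K(g)$ implies $g - p \in \cN_K(p)$. Unwinding the paper's definition of the normal cone, this means
$$
(g-p)^\top (y - p) \le 0 \qquad \text{for all } y \in K.
$$
The whole argument is then to test this variational inequality at two well-chosen points of $K$. Taking $y = 0 \in K$ gives $(g-p)^\top(0-p) \le 0$, i.e. $(g-p)^\top p \ge 0$. Taking $y = 2p \in K$ gives $(g-p)^\top(2p-p) \le 0$, i.e. $(g-p)^\top p \le 0$.

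Combining the two inequalities forces $(g-p)^\top p = 0$, which rearranges to $g^\top p = \|p\|^2$, the claim. I do not expect a genuine obstacle here: the only point requiring care is justifying that $0$ and $2p$ both lie in $K$, which is exactly where the cone structure of the tangent cone (as opposed to a general convex set) is used, and that the projection $\Pi_K$ is well defined, which holds because $K$ is a closed convex cone so the defining minimization attains a unique minimizer. Everything else is a direct substitution into the variational inequality supplied by Lemma~\ref{lem:projection_characteristic}.
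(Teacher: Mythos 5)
Your proof is correct and takes essentially the same approach as the paper: both apply Lemma~\ref{lem:projection_characteristic} with $K=T_{\cX}(x)$ to obtain the variational inequality $(g-p)^\top(y-p)\le 0$ for all $y\in K$, then exploit the cone structure of $K$ (containing $0$ and closed under nonnegative scaling). The only cosmetic difference is that you test directly at $y=2p$, whereas the paper shows $(g-p)^\top y\le 0$ for every $y\in K$ via a scaling contradiction and then specializes to $y=p$; the underlying argument is identical.
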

  \begin{proof}
    It suffices to show that $\left(g - \Pi_{T_{\cX}(x)}(g)\right)^\top \Pi_{T_{\cX}(x)}(g) =0 $.
    
    % When $x \in \textrm{int} \cX$, $T_{\cX}(x) = \bbR^n$, then the equality holds immediately. 
    % In the following, we focus on the case that $x \in \partial \cX$.

    Firstly, from Lemma~\ref{lem:projection_characteristic}, we have $g - \Pi_{T_{\cX}(x)}(g) \in \cN_{T_{\cX}(x)}(\Pi_{T_{\cX}(x)}(g))$, i.e. 
    % $\left(g - \Pi_{T_{\cX}(x)}(g)\right)^\top \Pi_{T_{\cX}(x)}(g) \ge \left(g - \Pi_{T_{\cX}(x)}(g)\right)^\top  y$,  $\forall y \in T_{\cX}(x)$. 
    \begin{align} \label{eq:orthogonality}
      \left(g - \Pi_{T_{\cX}(x)}(g)\right)^\top \Pi_{T_{\cX}(x)}(g) \ge \left(g - \Pi_{T_{\cX}(x)}(g)\right)^\top  y,  \;\forall y \in T_{\cX}(x).
    \end{align}
  
    For notation simplicity, set $\phi = g - \Pi_{T_{\cX}(x)}(g)$ for the analysis below. 
  
    Note that $0 \in T_{\cX}(x)$, then we have  $\phi^\top \Pi_{T_{\cX}(x)}(g) \ge 0$. Furthermore, from the definition of tangent cone, if $y \in T_{\cX}(x)$, then $t y \in  T_{\cX}(x) $ for all $t \ge 0$. 
    For the sake of contradiction, suppose $\phi^\top y > 0 $. Then, there exists $t >0$, such that $\phi^\top t y \ge \phi^\top \Pi_{T_{\cX}(x)}(g) $, which contradicts \eqref{eq:orthogonality}. Therefore, we conclude that $\phi^\top y \le 0 $, which further implies that $\phi^\top \Pi_{T_{\cX}(x)}(g) \le 0$ since $\Pi_{T_{\cX}(x)}(g) \in T_{\cX}(x)$. Therefore, we have $\phi^\top \Pi_{T_{\cX}(x)}(g) =0$ as desired.
  \end{proof}
  
  The following lemma is a special case of the Moreau decomposition, and enables us to use the Skorokhod problem framework. See \cite{hiriart2004fundamentals}. 
%Note a more general claim can be found in Exercise 12.22 in \citep{rockafellar2009variational}.
  \begin{lemma} \label{lem:proj_tangent_normal}
  For all $x \in \cX$, $g \in \bbR^n$,  the following holds
  \begin{align} \label{eq:equi_tangent_normal}
    \Pi_{T_{\cX}(x)}(g) = g - \Pi_{\cN_{\cX}(x)}(g).
  \end{align}
  \end{lemma}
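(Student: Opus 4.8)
The plan is to prove the identity by reducing it to the two variational lemmas already established in this appendix, namely the projection characterization (Lemma~\ref{lem:projection_characteristic}) and the orthogonality identity (Lemma~\ref{lem:proj_tangent_cone_2}), together with the elementary polarity relation between the tangent and normal cones of a convex set. Write $p = \Pi_{T_{\cX}(x)}(g)$ and $q = g - p$; the goal is exactly to show that $q = \Pi_{\cN_{\cX}(x)}(g)$. Since both $T_{\cX}(x)$ and $\cN_{\cX}(x)$ are closed convex cones, Lemma~\ref{lem:projection_characteristic} applies to each, and I will use it in the ``($\Leftarrow$)'' direction to verify that $q$ is the projection of $g$ onto $\cN_{\cX}(x)$.

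First I would record two structural facts that follow directly from the definitions in Section~\ref{sec:setup}. From $T_{\cX}(x)=\{t(y-x)\mid y\in\cX,\ t\ge 0\}$ and $\cN_{\cX}(x)=\{\phi\mid \phi^\top x\ge \phi^\top z,\ \forall z\in\cX\}$, one checks that $\phi^\top d\le 0$ for every $\phi\in\cN_{\cX}(x)$ and every $d\in T_{\cX}(x)$; in other words $\cN_{\cX}(x)$ is contained in the polar cone of $T_{\cX}(x)$. Moreover, for any closed convex cone $K$ and any $w\in K$, testing the defining inequality of $\cN_K(w)$ against $y=0\in K$ and $y=2w\in K$ forces $v^\top w=0$ and hence $v^\top y\le 0$ for all $y\in K$; that is, $\cN_K(w)$ is contained in the polar of $K$. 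Applying this with $K=T_{\cX}(x)$ and $w=p$ shows that the vector $q=g-p$, which by Lemma~\ref{lem:projection_characteristic} lies in $\cN_{T_{\cX}(x)}(p)$, must lie in the polar of $T_{\cX}(x)$, and therefore $q\in\cN_{\cX}(x)$.

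It then remains to verify the optimality condition $g-q=p\in\cN_{\cN_{\cX}(x)}(q)$, i.e. that $p^\top(w-q)\le 0$ for every $w\in\cN_{\cX}(x)$. Here I would combine two observations: since $p\in T_{\cX}(x)$, the polarity relation above gives $p^\top w\le 0$ for all $w\in\cN_{\cX}(x)$; and Lemma~\ref{lem:proj_tangent_cone_2} gives $g^\top p=\|p\|^2$, whence $q^\top p=(g-p)^\top p=0$. Consequently $p^\top(w-q)=p^\top w - p^\top q = p^\top w\le 0$, which is precisely the required normal-cone membership. Since we have shown $q\in\cN_{\cX}(x)$ and $g-q\in\cN_{\cN_{\cX}(x)}(q)$, the ``($\Leftarrow$)'' direction of Lemma~\ref{lem:projection_characteristic} yields $q=\Pi_{\cN_{\cX}(x)}(g)$, which rearranges to $\Pi_{T_{\cX}(x)}(g)=g-\Pi_{\cN_{\cX}(x)}(g)$.

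I do not expect a serious obstacle here, as this is the cone version of the Moreau decomposition; the only points requiring care are making the polarity statements precise from the paper's stated definitions and correctly identifying the normal cone of a cone (the observation that $v^\top w=0$ automatically holds at any $w$ in a cone). The orthogonality $q^\top p=0$ supplied by Lemma~\ref{lem:proj_tangent_cone_2} is what makes the final inequality collapse, so that lemma is the key ingredient rather than a routine step.
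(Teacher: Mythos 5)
Your argument is correct, but it necessarily differs from the paper for a simple reason: the paper does not prove this lemma at all. It states it as ``a special case of the Moreau decomposition'' and points to \citep{hiriart2004fundamentals}, so the only in-paper content is a citation. What you have written is essentially the standard proof of the conical Moreau decomposition, specialized to the pair $T_{\cX}(x)$, $\cN_{\cX}(x)$, and assembled entirely from the two lemmas the appendix does prove: Lemma~\ref{lem:projection_characteristic} certifies $q = g - p \in \cN_{T_{\cX}(x)}(p)$ and, in its converse direction, converts your two verified conditions into $q = \Pi_{\cN_{\cX}(x)}(g)$, while Lemma~\ref{lem:proj_tangent_cone_2} supplies the orthogonality $p^\top q = 0$ that makes $p^\top(w-q) = p^\top w \le 0$ collapse. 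What this buys is a self-contained Appendix~\ref{app:variational_geometry}, in the same style as the other two lemmas there, and it makes explicit the structural fact the citation hides, namely that $\cN_{\cX}(x)$ is the polar cone of $T_{\cX}(x)$; the paper's route buys brevity. Two small points to tighten. First, you explicitly record only the inclusion $\cN_{\cX}(x) \subseteq T_{\cX}(x)^{\circ}$, but the step ``therefore $q \in \cN_{\cX}(x)$'' uses the reverse inclusion $T_{\cX}(x)^{\circ} \subseteq \cN_{\cX}(x)$; with the paper's definitions this is immediate (take $t=1$ in $d = t(y-x)$: $v^\top(y-x) \le 0$ for all $y \in \cX$ is exactly $v^\top x \ge v^\top y$ for all $y \in \cX$), so the polar in fact equals $\cN_{\cX}(x)$ and the argument goes through, but the direction you actually invoke should be stated. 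Second, your opening claim that $T_{\cX}(x)$ is a closed convex cone is not literally guaranteed by the paper's definition $T_{\cX}(x) = \{t(y-x) \mid y \in \cX,\ t \ge 0\}$, which omits the closure; closedness is what makes $\Pi_{T_{\cX}(x)}(g)$ well defined, but since the paper itself uses this projection throughout, that gloss is inherited from the paper rather than introduced by you.
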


  \section{Background on the Skorokhod Problem} \label{app:skorokhod}

This appendix presents background on the Skorokhod problem needed for the paper.

%Similar introductions with more details are shown in \citep{lamperski2021projected, zheng2022constrained}. Here we show Skorokhod problem in a nutshell. 

The Skorokhod problem is a classical framework for constraining stochastic processes to remain in a set. It is a useful tool to analyze projection-based algorithms in continous time. 

Let $\cX$ be a convex subset of $\bbR^n$ with non-empty interior. Let $y: [0, \infty) \rightarrow \bbR^n$ be a trajectory which is right-continous with left limits and has $y_0 \in \cK$. For each $x \in \bbR^n$, let $\cN_{\cX}$ be the normal cone at $x$. Then the functions $x_t$ and $\phi_t$ solve the \textit{Skorokhod problem} for $y_t$ if the following conditions hold:
\begin{itemize}
 \item $x_t = y_t + \phi_t \in \cX$ for all $t \in [0,T)$.
 \item The function $\phi$ has the form $\phi_t = - \int_0^t v_s d \mu(s)$, where $\|v_s\| \in\{0,1\}$ and $v_s \in \cN_{\cX}(x_s)$ for all $s \in [0,T)$, while the measure, $\mu$, satisfies $\mu([0, T)) < \infty$ for any $T >0$.
\end{itemize}
 It is shown in \citep{tanaka1979stochastic} that a solution exists and is unique when $y$ is riht-continuous with left limits  and $\cX$ is convex. The existence and uniqueness of the solution implies that we can view the Skorokhod solution as a mapping: $x = \cS(y)$. And we are often interested in $x_t$, thus we will call $x_t$ as the solution of the Skorokhod problem corresponding to $y_t$.

 In the following, we present the connection between Skorokhod problems and projected algorithms assuming $y_t$ is piecewise constant. Specifically, assuming that $0 = \tau_0 < \tau_1 <\cdots <\tau_{N-1} \le T$ are the jump points of $y_t$, and let $S_k = [\tau_k, \tau_{k+1})$ for $k < N -1$ and $S_{N-1} = [\tau_{N-1}, T]$. Then $y_t$ can be represented as 
 \begin{align*}
  y_t = \sum_{k=0}^{N-1} y_{\tau_k} \indic_{S_k}(t).
 \end{align*}

 Then, the solution of the Skorokhod problem has the form 
 \begin{align*}
    x_{\tau_{k+1}} = \Pi_{\cX}(x_{\tau_k} + y_{\tau_{k+1} }- y_{\tau_k}).
 \end{align*}

 %More details on the form of the solution and the intuition on such solution can be found in \citep{lamperski2021projected}.

% \crefalias{section}{appendix} % uncomment if you are using cleveref

%\section{My Proof of Theorem 1}
%
%This is a boring technical proof.
%
%\section{My Proof of Theorem 2}
%
%This is a complete version of a proof sketched in the main text.

\end{document}